\definecolor{darkblue}{rgb}{0.0, 0.0, 0.55}
\definecolor{bordeaux}{rgb}{0.34, 0.01, 0.1}
\newtheorem{theorem}{Theorem}[section]
\newtheorem{corollary}[theorem]{Corollary}
\newtheorem{lemma}[theorem]{Lemma}
\newtheorem{proposition}[theorem]{Proposition}
\newtheorem{thmA}{Theorem}
\theoremstyle{definition}
\newtheorem{remark}[theorem]{Remark}
\newtheorem{example}[theorem]{Example}
\numberwithin{equation}{section}
\DeclareMathOperator{\Sym}{Sym}
\DeclareMathOperator{\tr}{tr}
\DeclareMathOperator{\supp}{supp}
\DeclareMathOperator{\id}{id}
\newif\ifcomment
\newcommand{\sigalg}{$\sigma$-algebra\xspace}
\begin{document}
\def\cA{\mathcal A}
\def\cH{\mathcal H}
\def\cK{\mathcal K}
\def\cX{\mathcal X}
\def\red{\color{red}}
\def\bl{\color{blue}}
\def\ora{\color{orange}}
\def\green{\color{green}}
\def\br{\color{brown}}
\def\la{\langle}
\def\ra{\rangle}
\def\e{{\rm e}}
\def\x{\mathbf{x}}
\def\by{\mathbf{y}}
\def\bz{\mathbf{z}}
\def\cC{\mathcal{C}}
\def\R{\mathbb{R}}
\def\C{\mathbb{C}}
\def\O{\operatorname{O}}
\def\Mbb{\mathbb{M}}
\def\Sbb{\mathbb{S}}
\newcommand*{\sbb}[1]{\operatorname{S}_{#1}(\R)}
\def\T{\mathbb{T}}
\def\N{\mathbb{N}}
\def\K{\mathbb{K}}
\def\bK{\overline{\mathbf{K}}}
\def\Q{\mathbf{Q}}
\def\M{\mathbf{H}}
\def\O{\mathbf{O}}
\def\Hbb{\mathbb{H}}
\def\P{\mathbf{P}}
\def\Z{\mathbb{Z}}
\def\A{\mathbf{A}}
\def\W{\mathbf{W}}
\def\bfone{\mathbf{1}}
\def\V{\mathbf{V}}
\def\AA{\overline{\mathbf{A}}}
\def\bL{\mathbf{L}}
\def\bS{\mathbf{S}}
\def\Y{\mathbf{Y}}
\def\G{\mathbf{G}}
\def\Bbb{\mathbb{B}}
\def\Dbb{\mathbb{D}}
\def\f{\mathbf{f}}
\def\z{\mathbf{z}}
\def\bx{\mathbf{x}}
\def\h{\mathbf{h}}
\def\u{\mathbf{u}}
\def\g{\mathbf{g}}
\def\w{\mathbf{w}}
\def\a{\mathbf{a}}
\def\q{\mathbf{q}}
\def\u{\mathbf{u}}
\def\vb{\mathbf{v}}
\def\s{\mathcal{S}}
\def\cD{\mathcal{D}}
\def\co{{\rm co}\,}
\def\cp{{\rm CP}}
\def\tg{\tilde{f}}
\def\tx{\tilde{\x}}
\def\supmu{{\rm supp}\,\mu}
\def\supnu{{\rm supp}\,\nu}
\def\m{\mathcal{M}}
\def\bR{\mathbf{R}}
\def\om{\mathbf{\Omega}}
\def\s{\mathcal{S}}
\def\k{\mathcal{K}}
\def\la{\langle}
\def\ra{\rangle}
\def\sig{\varsigma}
\def\bcK{{\mathbf{K}}}
\def\bE{{\rm E}}

\def\y{\mathtt{m}}
\def\ux{\underline x}
\def\mp{\mathscr{M}}
\def\MP{\mp[\ux]}
\def\px{\R[\ux]}
\newcommand{\prob}[1]{\mathbf{P}(#1)}
\def\d{{\,\rm d}}
\def\ve{\varepsilon}
\newcommand{\QM}[1]{\operatorname{QM}(#1)}
\newcommand{\qm}[1]{\operatorname{qm}(#1)}
\newcommand{\QQM}[1]{\widetilde{\operatorname{QM}}(#1)}
\newcommand{\qqm}[1]{\widetilde{\operatorname{qm}}(#1)}

\newcommand{\bra}[1]{\mathinner{\langle #1|}}
\newcommand{\ket}[1]{\mathinner{|#1\rangle}}
\newcommand{\braket}[2]{\mathinner{\langle #1|#2\rangle}}
\newcommand{\dyad}[1]{| #1\rangle \langle #1|}

\def\cP{\mathcal{P}}
\def\cM{\mathcal{M}}
\def\cQ{\operatorname{QM}_\sig}
\def\cN{\mathcal{N}}
\def\cF{\mathcal{F}}
\def\cE{\mathcal{E}}
\def\cB{\mathcal{B}}
\def\cL{\mathcal{L}}

\def\smileL{\overset{\smallsmile}{L}}
\def\blambda{{\boldsymbol{\lambda}}}
\def\bsigma{{\Delta}}
\def\RX{\R \langle \underline{x} \rangle}
\def\RXk{\R \langle \underline{x}(I_k) \rangle}
\def\RXonetwo{\R \langle \underline{x}(I_1 \cap I_2) \rangle}
\def\CX{\C \langle \underline{x} \rangle}
\def\TX{\fatT}
\def\KX{\K \langle \underline{x} \rangle}
\def\uX{\underline X}
\def\uY{\underline Y}
\def\uF{\underline F}
\def\mx{\langle\underline x\rangle}

\def\SymS{\Sym \fatS}
\def\ov{\overline{o}}
\def\und{\underline{o}}
\newcommand{\victor}[1]{\Vi{#1}}
\newcommand{\victorshort}[1]{\todo[inline,color=purple!30]{VM: #1}}
\newcommand{\igor}[1]{\Ig{#1}}

\setcounter{secnumdepth}{3}
\setcounter{tocdepth}{3}
\makeatletter
\newcommand{\mycontentsbox}{%
\printindex
{\centerline{NOT FOR PUBLICATION}
\addtolength{\parskip}{-2.0pt}\normalsize
\tableofcontents}}
\def\enddoc@text{\ifx\@empty\@translators \else\@settranslators\fi
\ifx\@empty\addresses \else\@setaddresses\fi
\newpage\mycontentsbox%\newpage\printindex
}
\makeatother

\colorlet{commentcolour}{green!50!black}
\newcommand{\comment}[3]{%
\ifcomment%
	{\color{#1}\bfseries\sffamily(#3)%
	}%
	\marginpar{\textcolor{#1}{\hspace{3em}\bfseries\sffamily #2}}%
	\else%
	\fi%
}
\newcommand{\Ig}[1]{%Igor
	\comment{magenta}{I}{#1}
}
\newcommand{\Vi}[1]{%Victor
	\comment{blue}{V}{#1}
}
\newcommand{\jurij}[1]{%Victor
	\comment{orange}{J}{#1}
}

\newcommand{\idea}[1]{\textcolor{red}{#1(?)}}

\newcommand{\Expl}[1]{%visible comments for further explanation
	{\tag*{\text{\small{\color{commentcolour}#1}}}%
	}
}

\title[Sums of squares certificates for polynomial moment inequalities]{Sums of squares certificates for\\[1mm] polynomial moment inequalities}
\author{Igor Klep \and Victor Magron \and Jurij Vol\v{c}i\v{c} }
\date{\today}
\address{Igor Klep: Faculty of Mathematics and Physics, Department of Mathematics,  University of Ljubljana \& Institute of Mathematics, Physics and Mechanics, Ljubljana, Slovenia}
\email{igor.klep@fmf.uni-lj.si}
\thanks{IK was supported by the 
Slovenian Research Agency grants 
J1-50002, N1-0217, J1-3004 and P1-0222.
}
\address{Victor Magron: LAAS-CNRS \& Institute of Mathematics from Toulouse, France}
\email{vmagron@laas.fr}
\thanks{VM was supported by the EPOQCS grant funded by the LabEx CIMI (ANR-11-LABX-0040), the FastQI grant funded by the Institut Quantique Occitan, the PHC Proteus grant
46195TA, the European Union’s Horizon 2020 research and innovation programme under the Marie Sk{\l}odowska-Curie Actions, grant agreement 813211 (POEMA), by the AI Interdisciplinary Institute ANITI funding, through the French ``Investing for the Future PIA3'' program under the Grant agreement n${}^\circ$ ANR-19-PI3A-0004 as well as by the National Research Foundation, Prime Minister’s Office, Singapore under its Campus for Research Excellence and Technological Enterprise (CREATE) programme.}
\address{Jurij Vol\v{c}i\v{c}: Department of Mathematics, Drexel University, Pennsylvania}
\email{jurij.volcic@drexel.edu}
\thanks{JV was supported by the NSF grant DMS-1954709.}
\date{}

\begin{abstract}
This paper introduces and develops the algebraic framework of moment polynomials, which are polynomial expressions in commuting variables and their formal mixed moments. Their positivity and optimization over probability measures supported on semialgebraic sets and subject to moment polynomial constraints is investigated. A positive solution to Hilbert's 17th problem for pseudo-moments is given. On the other hand, moment polynomials positive on actual measures are shown to be sums of squares and formal moments of squares up to arbitrarily small perturbation of their coefficients. When only measures supported on a bounded semialgebraic set are considered, a stronger algebraic certificate for moment polynomial positivity is derived. This result gives rise to a converging hierarchy of semidefinite programs for moment polynomial optimization. Finally, as an application, two open nonlinear Bell inequalities from quantum physics are settled.
\end{abstract}

\keywords{Moment polynomial, Positivstellensatz, polynomial optimization, moment problem, semidefinite programming, nonlinear Bell inequality}

\subjclass[2020]{13J30, 44A60, 60E15, 90C22, 46G12, 47L60, 81-08}

\maketitle

\section{Introduction}
\label{sec:intro}

Consider two groups of independent variables, $x_1,\dots,x_n$ and $\y_{i_1,\dots,i_n}$ for $i_1,\dots i_n\in\N_0$.
Here, the latter are viewed as formal mixed moments, also denoted $\y_{i_1,\dots,i_n}=\y(x_1^{i_1}\cdots x_n^{i_n})$, which in the presence of a probability measure $\mu$ on $\R^n$ evaluate as $\int x_1^{i_1}\cdots x_n^{i_n} \d \mu$. 
This paper focuses on the class of  \emph{moment polynomials}, i.e., polynomials in $x_1,\dots,x_n$ and their formal moments, and their inequalities.
A very rudimentary instance of a moment polynomial inequality is that every random variable $X$ has a nonnegative variance, 
\[
	{\rm Var}(X)=\bE(X^2)-\bE(X)^2 = \bE
\big((X-\bE(X))^2 \big)\ge0;
	\] in the language of this paper, we say that the moment polynomial $\y(x_1^2)-\y(x_1)^2$ is nonnegative.
Problems involving moment polynomial inequalities and optimization arise in various fields. 
For example, concentration inequalities in moments provide bounds on deviation of a random variable, and the search for such inequalities has been a flourishing area of probability theory \cite{boucheron05,BP05,mackey14}.
In statistics,
distributions of structural parameters are partially identified by moment inequalities \cite{liao10};
similarly, moment inequality constraints can be used as a basis for estimation and inference in partially identified behavioral models in economics \cite{pakes15} and industrial organization \cite{kline21}.
Another instance of moment polynomial inequalities arises in operator theory \cite{curto2010}, where they characterize hyponormality of operators.
Moment optimization is used to study partial differential equations:
in \cite{henrion23}, solving a heat equation with a nonlinear perturbation is formulated as a linear optimization problem on moments, while \cite{fantuzzi} applies polynomial optimization to verify integral inequalities arising from Lyapunov analysis of fluid flows.
Lastly, nonlinear Bell inequalities in quantum information theory are moment inequalities that certify nonlocality in quantum networks \cite{PHBB,TGB21,tavakoli22}. 

To systematically approach moment polynomial inequalities,
it is natural to start from the theory built around their moment-free analogs, namely real algebraic geometry \cite{marshallbook}. The cornerstone of real algebraic geometry are sums of squares certificates for nonnegative polynomials. Artin's solution of Hilbert's 17th problem characterizes nonnegative polynomials on $\R^n$ in terms of sums of squares and denominators, and Putinar's Positivstellensatz \cite{Putinar1993positive} describes polynomials positive on compact semialgebraic sets in $\R^n$.
The latter was groundbreakingly applied to polynomial optimization in \cite{Las01sos}, resulting in Lasserre's hierarchy, based on semidefinite programming. This hierarchy yields a sequence of nondecreasing lower bounds converging to the global infimum of a polynomial over a compact semialgebraic set.
Positive polynomials also play a crucial role in functional analysis and measure theory through moment problems \cite{schmbook}. The duality between polynomials positive on a semialgebraic set $K$ and measures supported on $K$ connects sums of squares certificates with necessary conditions for solvability of the moment problem on $K$.
The monographs \cite{lasserre2009moments,henrion2020moment} present many applications of the moment problem, Lasserre's hierarchy and its variations.
More recent developments in this field concern nonlinear expressions in moments, and the infinite-dimensional moment problem.
In \cite{blekherman2022}, techniques of tropical geometry are applied to nonnegative polynomials and moment problems, resulting in classification of moment \emph{binomial} inequalities.
In the recent work \cite{henrion23}, nonlinear partial differential equations are formulated  as moment problems for measures supported on infinite-dimensional vector spaces,
and then results about the infinite-dimensional moment problem in nuclear spaces \cite{infusino2014,infusino2023moment} are leveraged to derive converging approximations to solutions of differential equations. 

In the noncommutative setting, the Helton-McCullough Positivstellensatz \cite{Helton04} leads to similar methods for optimizing eigenvalues of polynomials in matrix or operator variables \cite{burgdorf16}. 
The famous Navascu\'es-Pironio-Ac\'in hierarchy \cite{navascues2008convergent} yields bounds over the maximal violation levels of linear Bell inequalities, which also relates to the {quantum moment problem} \cite{doherty2008quantum}. 
Motivated by the more difficult study of nonlinear Bell inequalities \cite{PHBB} for correlations in quantum networks \cite{tavakoli22}, the three authors have recently proposed two nonlinear extensions to optimization problems over trace \cite{KMV} and state polynomials \cite{KMVW}, derived from Positivstellens\"atze for polynomials in noncommuting variables and formal traces or states of their products. In this paper, we let noncommutative real algebraic geometry offer a new perspective on commutative problems involving moment polynomials.  

\subsection*{Moment polynomials}

This paper 
investigates positivity and optimization of moment polynomials
subject to
polynomial relations between the problem variables $x_j$ and their formal mixed moments.
For example,
$$f=\y(x_1x_2^3)x_1x_2-\y(x_1^2)^3x_2^2+x_2-\y(x_2)\y(x_1x_2)-2$$
is a moment polynomial; at a probability measure $\mu$ on $\R^2$ with fourth order moments and a pair $(X_1,X_2)\in\R^2$, $f$ evaluates as
$$f\big(\mu,(X_1,X_2)\big)=X_1X_2\int x_1x_2^3 \d \mu -X_2^2\left(\int x_1^2\d \mu\right)^3+X_2-\int x_2\d \mu\int x_1x_2\d \mu-2.$$
A moment polynomial without freely occurring $x_j$, e.g. $\y(x_1^2x_2^2)-\y(x_1)^4+\y(x_1)\y(x_2)\y(x_1x_2)$, is called \emph{pure}.
The algebra of pure moment polynomials is denoted by $\mp$, and the algebra of moment polynomials is denoted by $\MP$. There is a natural $\mp$-linear map $\y:\MP\to\mp$ that corresponds to formal integration.

To study constrained positivity of moment polynomials, let $S_1\subseteq\px$ and $S_2\subseteq\mp$ be collections of constraints. Let $K(S_1)$ be the set of points $\uX\in\R^n$ such that all polynomials in $S_1$ are nonnegative at $\uX$. 
Let $\prob{K(S_1)}$ be the set of all Borel probability measures supported on $K(S_1)$, and let $\bcK(S_1,S_2)$ be the set of measures  $\mu\in\prob{K(S_1)}$ such that all pure moment polynomials in $S_2$ are nonnegative at $\mu$.
Adapting a standard notion from real algebra \cite{marshallbook}, we define the \emph{quadratic module} $\QM{S_1,S_2}\subseteq\MP$ as the convex hull of
$$\Big\{f^2\,\y(g^2s),\ f^2t
\colon
s\in S_1\cup\{1\},\ t\in S_1\cup S_2,\ f,g\in\MP
\Big\}.$$
Elements of $\QM{S_1,S_2}$ are clearly nonnegative on $\bcK(S_1,S_2)\times K(S_1)$.
This paper addresses the converse, and provides certificates for moment polynomial positivity on $\bcK(S_1,S_2)\times K(S_1)$ in terms of $\QM{S_1,S_2}$.

\subsection*{Main results}

The first positivity certificate applies to archimedean quadratic modules. Here, $\QM{S_1,S_2}$ is \emph{archimedean} if $N-x_1^2-\cdots-x_n^2\in \QM{S_1,S_2}$ for some $N\in\N$. Note that the constrained set $K(S_1)$ is bounded in this instance. Conversely, if $K(S_1)$ is contained in a ball of radius $R$, we may add $R^2-x_1^2-\cdots-x_n^2$ to $S_1$ to obtain an archimedean quadratic module without shrinking $\bcK(S_1,S_2)\times K(S_1)$.

\begin{thmA}[Theorem \ref{t:arch}]\label{ta:a}
If $\QM{S_1,S_2}$ is archimedean, the following statements are equivalent for $f\in\MP$:
\begin{enumerate}[\rm (i)]
    \item $f\ge0$ on $\bcK(S_1,S_2)\times K(S_1)$;
    \item $f+\ve\in\QM{S_1,S_2}$ for every $\ve>0$.
\end{enumerate}
\end{thmA}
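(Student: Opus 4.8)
The plan is to dispatch (ii)$\Rightarrow$(i) directly and (i)$\Rightarrow$(ii) by a Hahn--Banach separation followed by a GNS construction and the spectral theorem, mirroring the proofs of Putinar's and the Helton--McCullough Positivstellens\"atze. For (ii)$\Rightarrow$(i): every element of $\QM{S_1,S_2}$ is nonnegative on $\bcK(S_1,S_2)\times K(S_1)$ (as noted before the statement), so $f+\ve\ge0$ there for all $\ve>0$, and letting $\ve\to0$ gives (i). For (i)$\Rightarrow$(ii), I would argue by contradiction: assume $f+\ve\notin\QM:=\QM{S_1,S_2}$ for some $\ve>0$. I would first record the structural facts that $\QM$ is a quadratic module of $\MP$ (it is closed under multiplication by squares, since $h^2f^2\y(g^2s)=(hf)^2\y(g^2s)$ and $h^2f^2t=(hf)^2t$), that $1\in\QM$ and $a^2\in\QM$ for all $a\in\MP$ (take $g=1$, $s=1$, and use $\y(1)=1$), and that the archimedean hypothesis makes $1$ an order unit: for every $a\in\MP$ there is $N\in\N$ with $N\pm a\in\QM$, so $\MP=\QM-\QM$ and $1$ is an algebraic interior point of $\QM$. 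A standard separation argument then yields a linear functional $L\colon\MP\to\R$ with $L\ge0$ on $\QM$, $L(f+\ve)\le0$, and (after normalizing, the order-unit property ruling out $L(1)=0$) $L(1)=1$.

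Next I would run the GNS construction for $L$. Since $a^2\in\QM$, the bilinear form $\la a,b\ra:=L(ab)$ is symmetric and positive semidefinite; modding out its kernel and completing gives a separable Hilbert space $\cH$ with a cyclic vector (the image of $1$) on which multiplication by $x_j$ and by $\y_\alpha$ defines commuting self-adjoint operators $M_{x_j}$, $M_{\y_\alpha}$. Archimedean-ness makes all of them bounded: $N-\sum_jx_j^2\in\QM$ bounds the $M_{x_j}$, while for $M_{\y_\alpha}$ one combines the Cauchy--Schwarz relation $\y_{2\alpha}-\y_\alpha^2=\y\big((x^\alpha-\y_\alpha)^2\big)\in\QM$ with archimedean-ness of the polynomial quadratic module $\QM(S_1)\subseteq\px$ (inherited from the hypothesis), which bounds $\y_{2\alpha}$ in $\QM$. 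By the spectral theorem for a countable commuting family of bounded self-adjoint operators, there is a compactly supported Borel probability measure $\rho$ on $\R^n\times\R^{\N_0^n}$ with $L(h)=\int h(X,m)\, \d\rho(X,m)$ for all $h\in\MP$, where $h(X,m)$ denotes the substitution $x_j\mapsto X_j$, $\y_\alpha\mapsto m_\alpha$; moreover, by construction, the image of $\MP$ is dense in $L^2(\rho)$.

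The crux is to identify $\supp\rho$. Positivity of $L$ on the generators $f^2s$ with $s\in S_1$ and $f\in\MP$ arbitrary, combined with density, forces $s\ge0$ on $\supp\rho$, i.e.\ $X\in K(S_1)$. Positivity of $L$ on $f^2\,\y(g^2s)$ with $g\in\px$ and $s\in S_1\cup\{1\}$ forces, $\rho$-almost everywhere, the moment matrix $(m_{\beta+\gamma})_{\beta,\gamma}$ and every $S_1$-localizing matrix of the sequence $(m_\alpha)$ to be positive semidefinite; since $m_0=\y_0=1$ and $\QM(S_1)$ is archimedean, Putinar's solution of the $K(S_1)$-moment problem produces a \emph{unique} probability measure $\mu_m\in\prob{K(S_1)}$ with $\int x^\alpha \d\mu_m=m_\alpha$ for all $\alpha$. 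Positivity of $L$ on $f^2t$ with $t\in S_2$ then forces $t(m)=t(\mu_m)\ge0$ for $\rho$-a.e.\ $m$, so $\mu_m\in\bcK(S_1,S_2)$. Because $m_\alpha=\int x^\alpha \d\mu_m$, we have $h(X,m)=h(\mu_m,X)$ for every $h\in\MP$, whence $L(f+\ve)=\int (f(\mu_m,X)+\ve)\, \d\rho(X,m)$; by (i) the integrand is $\ge\ve>0$ on $\supp\rho$, so $L(f+\ve)>0$, contradicting $L(f+\ve)\le0$. Therefore $f+\ve\in\QM{S_1,S_2}$ for every $\ve>0$.

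The main obstacle I anticipate is exactly this last identification step---recognizing that the $\y_\alpha$-marginal of $\rho$ is, $\rho$-almost surely, the moment sequence of a measure in $\bcK(S_1,S_2)$. This is where the particular shape of the generators $f^2\y(g^2s)$, as opposed to the purely polynomial ones, must be exploited, and where the spectral decomposition has to be fused with Putinar's archimedean moment theorem; a secondary point needing care is the propagation of archimedean-ness to the boundedness of the $M_{\y_\alpha}$ and to $\QM(S_1)$.
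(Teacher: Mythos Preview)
Your argument is correct in outline but takes a substantially longer route than the paper. The paper's proof bypasses the entire GNS/spectral-theorem apparatus by invoking the Kadison--Dubois representation theorem (Marshall, Theorem 5.4.4) directly: since $\QM{S_1,S_2}$ is archimedean in the \emph{commutative} ring $\MP$, the failure of $f+\ve\in\QM{S_1,S_2}$ immediately produces a ring \emph{homomorphism} $\varphi\colon\MP\to\R$ with $\varphi(\QM{S_1,S_2})\subseteq\R_{\ge0}$ and $\varphi(f)<0$. Multiplicativity is the shortcut: one sets $\uX=(\varphi(x_1),\dots,\varphi(x_n))\in K(S_1)$, defines the unital positive functional $L(p)=\varphi(\y(p))$ on $\px$, and applies the compact moment problem once to obtain $\mu\in\prob{K(S_1)}$ with $\varphi(\y_{i_1,\dots,i_n})=\int x^{i}\d\mu$; then $\mu\in\bcK(S_1,S_2)$ is automatic from $\varphi(s)\ge0$ for $s\in S_2$, and $f(\mu,\uX)=\varphi(f)<0$.

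Your separation + GNS + joint spectral decomposition effectively \emph{reproves} Kadison--Dubois in this setting: the measure $\rho$ you construct is supported on characters of $\MP$, and you then argue fiberwise. This works, but the fiberwise identification of the $\y$-marginal as a $K(S_1)$-moment sequence is exactly what multiplicativity of $\varphi$ hands you for free. What your approach buys is a template that generalizes to noncommutative settings (indeed, this is closer to how the state-polynomial analog in \cite{KMVW} is proved); what the paper's approach buys is brevity and no measurability or density bookkeeping. One point you flag as secondary is actually a genuine gap in your write-up: you assert that archimedeanness of $\QM{S_1}$ in $\px$ is ``inherited from the hypothesis'', but the hypothesis of Theorem~A is archimedeanness of $\QM{S_1,S_2}$ in $\MP$, and the implication in that direction is not obvious---the paper's Theorem~\ref{t:arch} in fact takes archimedeanness of $\QM{S_1}$ in $\px$ as the \emph{hypothesis} and uses Lemma~\ref{l:arch} for the other direction. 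You need this (or a direct bound on $\y_{2\alpha}$ in $\QM{S_1,S_2}$ plus a compactness argument for the support of $\mu_m$) to invoke Putinar's moment solution.
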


Theorem \ref{ta:a} is proved using results from real algebra and the solution of the moment problem for compactly supported measures.
Analogously to Lasserre's hierarchy \cite{Las01sos} leveraging Putinar's Positivstellensatz \cite{Putinar1993positive} in polynomial optimization, we utilize Theorem \ref{ta:a} to derive a procedure for solving the moment polynomial optimization problem
$$\text{minimize }f(\mu,\uX) 
\text{ subject to } \uX\in K(S_1)
\text{ and } \mu\in\bcK(S_1,S_2)$$
based on semidefinite programming.

\begin{thmA}[Corollary \ref{c:hier2}]\label{ta:b}
Let $\QM{S_1,S_2}$ be archimedean and $f$ a moment polynomial. The Positivstellensatz-induced hierarchy of semidefinite
programs produces a nondecreasing sequence converging to the infimum of $f$ on $\bcK(S_1,S_2)\times K(S_1)$.
\end{thmA}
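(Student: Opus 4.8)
The plan is to transcribe Lasserre's hierarchy to the moment polynomial setting, with Theorem~\ref{ta:a} playing the role of Putinar's Positivstellensatz. Write $f^\star$ for the infimum of $f$ on $\bcK(S_1,S_2)\times K(S_1)$, and assume this feasible set is nonempty (otherwise $f^\star=+\infty$ and the hierarchy will be seen to diverge to $+\infty$ as well). Since $\QM{S_1,S_2}$ is archimedean, $K(S_1)$ is compact, so the finitely many moment variables occurring in $f$ are bounded on measures supported there and thus $f^\star$ is finite. Applying Theorem~\ref{ta:a} to $f-f^\star+\ve$ for every $\ve>0$ gives $f-(f^\star-\ve)\in\QM{S_1,S_2}$, whereas $f-(f^\star+\ve)\notin\QM{S_1,S_2}$ since it is strictly negative somewhere on the feasible set. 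Hence
$$f^\star=\sup\{\lambda\in\R\colon f-\lambda\in\QM{S_1,S_2}\}.$$

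Next I would truncate $\QM{S_1,S_2}$ into finite-dimensional pieces. Fix a grading on $\MP=\mp[\ux]$ by weighting each $x_j$ with $1$ and each moment variable $\y_\alpha$ with $1+|\alpha|$ and extending multiplicatively; then only finitely many monomials have weight at most $d$, and I write $\MP_{\le d}$ for their span. For $k\in\N$, let $\QM{S_1,S_2}_k$ be the convex cone generated by those generators $f^2\,\y(g^2s)$ and $f^2t$ of $\QM{S_1,S_2}$ that additionally lie in $\MP_{\le 2k}$. These cones are nested, $\QM{S_1,S_2}_k\subseteq\QM{S_1,S_2}_{k+1}$, their union is $\QM{S_1,S_2}$, and each sits in the finite-dimensional space $\MP_{\le 2k}$.

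Third, I claim each level of the hierarchy is a semidefinite program. Put $f_k:=\sup\{\lambda\in\R\colon f-\lambda\in\cC_k\}$, where $\cC_k$ is a convex cone with $\QM{S_1,S_2}_k\subseteq\cC_k\subseteq\{h\in\MP:h\ge0\text{ on }\bcK(S_1,S_2)\times K(S_1)\}$ whose defining condition is a linear matrix inequality. To build such a $\cC_k$: represent the unknown squares in a putative decomposition of $f-\lambda$ by positive semidefinite Gram matrices over the monomial basis of $\MP_{\le k}$, and use the $\mp$-linearity of $\y$, which carries a monomial $p\,x^\gamma$ with $p$ pure to $p\,\y_\gamma$; then each term $\y(g^2s)$ turns into the trace pairing of the Gram matrix of $g$ with a fixed matrix over $\mp$, so the coordinates of $f-\lambda$ in the basis of $\MP_{\le 2k}$ become affine in $\lambda$ and in the Gram matrix entries. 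The only delicate term is the double square $f^2\y(g^2s)$, which I would linearize by an auxiliary copy $\ux'$ of the variables via the identity $f(\ux)^2\,\y(g^2s)=\big[\y_{\ux}\big((f(\ux')g(\ux))^2s(\ux)\big)\big]_{\ux'\to\ux}$, drawing these terms from genuine squares in $\mp[\ux,\ux']$; the cone $\cC_k$ so obtained is still sandwiched as required, since an SOS integrand times a nonnegative $s$ integrates to a nonnegative value. Maximizing the linear objective $\lambda$ subject to these constraints is a semidefinite program; equivalently one can set up the $k$-th program on the dual moment side, where moment and localizing matrices make the semidefinite structure manifest.

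Finally, convergence. Since $\cC_k\subseteq\cC_{k+1}$, the sequence $(f_k)_k$ is nondecreasing. Since $\cC_k$ consists of moment polynomials nonnegative on $\bcK(S_1,S_2)\times K(S_1)$, we have $f-f_k\ge0$ there, hence $f_k\le f^\star$ for all $k$. Conversely, given $\ve>0$, the first step yields $f-(f^\star-\ve)\in\QM{S_1,S_2}=\bigcup_k\QM{S_1,S_2}_k\subseteq\bigcup_k\cC_k$, so $f-(f^\star-\ve)\in\cC_k$ for some $k$, whence $f_k\ge f^\star-\ve$. Therefore $f_k\uparrow f^\star$, which is the assertion. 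I expect the main obstacle to be the third step together with the grading in the second: one must choose the weights on $\MP$ so that the truncations are at once finite-dimensional — which forces control not only of the degree in the $x_j$ and of the number of moment factors, but also of the orders of the moments $\y_\alpha$ appearing — and exhaustive of $\QM{S_1,S_2}$, and one must phrase membership so that the double-square terms contribute a linear, not merely bilinear, matrix inequality. Once this is done, the convergence itself is the standard Lasserre argument, now powered by Theorem~\ref{ta:a}.
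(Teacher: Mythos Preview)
Your approach is correct and essentially matches the paper's: the paper too enlarges $\QM{S_1,S_2}_{2r}$ to an SDP-representable cone $\QQM{S_1,S_2}_{2r}$, consisting of sums of terms $\sum G^{(s)}_{(u_1,v_1),(u_2,v_2)}\y(u_1u_2 s)v_1v_2$ with $G^{(s)}\succeq 0$ indexed by pairs $(u_i,v_i)$ with $u_i\in[\ux]$ and $v_i$ monomials in $\MP$, which is precisely the cone your auxiliary-variable identity $f^2\y(g^2s)=[\y_{\ux}((f(\ux')g(\ux))^2 s)]_{\ux'\to\ux}$ produces (the pair $(u,v)$ corresponds to the monomial $u(\ux)\cdot v$ with the free $\ux$-part of $v$ relabeled $\ux'$). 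The paper uses the grading $\deg\y_\alpha=|\alpha|$, which already gives finite-dimensional truncations, so the $+1$ in your weight is harmless but unneeded; the convergence argument is then identical to yours, invoking Theorem~\ref{t:arch} in place of Putinar.
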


We apply Theorem \ref{ta:b} to moment polynomial optimization problems from quantum information theory. 
Two nonlinear Bell inequalities conjectured in \cite{PHBB} and \cite{TGB21,tavakoli22} are established.
For example, our optimization scheme allows us to solve the following problem:
\begin{equation}\label{e:Z=0intro}
\begin{split}
&\sup \
\frac13 \sum_{i\in\{1,2,3\} } \Big(\y(x_{i+3}x_{i+6})-\y(x_ix_{i+3})\Big)-\sum_{\{i,j,k\}=\{1,2,3\} } \y(x_ix_{j+3}x_{k+6})\\
&\text{subject to }\\
& \y(x_1^{k_1}x_2^{k_2}x_3^{k_3}x_7^{k_4}x_8^{k_5}x_9^{k_6})=
\y(x_1^{k_1}x_2^{k_2}x_3^{k_3})\y(x_7^{k_4}x_8^{k_5}x_9^{k_6}) \quad \text{for } k_i\in\{0,1\}, \\
&x_j^2=1 \text{ and }\y(x_j)=0 \quad \text{for }
j\in \{0,\dots,9\}, \\
&\y(x_ix_{j+3})=\y(x_{i+3}x_{j+6})=0\quad \text{for } i,j\in\{1,2,3\},\ i\neq j, \\
&\y(x_ix_{j+3}x_{k+6})=0\quad \text{for }i,j,k\in\{1,2,3\},\ |\{i,j,k\}|\le2.
\end{split}
\end{equation}
In Subsection \ref{sss:biloc} it is shown that the solution of \eqref{e:Z=0intro} is 4, attained by certain binary variables and the uniform measure on 16 points, thereby answering a question in \cite{TGB21,tavakoli22}.

Next, we address certificates for moment polynomial positivity subject to constraint sets $S_1$ and $S_2$ without the archimedean assumption. In particular, we aim to describe everywhere nonnegative moment polynomials (i.e., $S_1=S_2=\emptyset$). 
In this particular case, one might first consider an analog of Hilbert's 17th problem for moment polynomials (H17): if $f\in\MP$ is nonnegative on $\prob{\R^n}\times \R^n$, can we write it as a quotient of sums of products of elements of the form $f^2$ and $\y(f^2)$ for $f\in\MP$? 
It turns out that the answer to this question is negative (cf.~ Example \ref{ex:h17}). 
More precisely, the algebraic certificate in (H17) characterizes the strictly smaller class of moment polynomials that are nonnegative under \emph{pseudo-moment evaluations}. A pseudo-moment evaluation is a homomorphism $\varphi:\MP\to\R$ satisfying $\varphi(\y(p^2))\ge0$ for all $p\in\px$.

\begin{thmA}[Theorem \ref{t:h17pseudo}]\label{ta:c}
The following statements are equivalent for $f\in\MP$:
\begin{enumerate}[\rm (i)]
    \item $\varphi(f)\ge0$ for every pseudo-moment evaluation 
    $\varphi$;
    \item $f$ is a quotient of sums of products of elements of the form $h^2$ and $\y(h^2)$ for $h\in\MP$.
\end{enumerate}
\end{thmA}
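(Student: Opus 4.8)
The plan is to recast condition (ii) as membership in a preordering of a function field, and then combine the Artin–Schreier theorem with a transfer argument. Let $T\subseteq\MP$ be the preordering generated by $\{\y(h^2)\colon h\in\MP\}$. Since $\MP=\mp[\ux]$ is a polynomial ring over $\R$ in the $x_j$ and the formal moments, it is an integral domain; moreover, since the condition $\varphi(\y(p^2))\ge0$ for all $p\in\px$ says exactly that the infinite moment matrix $\bigl(\varphi(\y_{\alpha+\beta})\bigr)_{\alpha,\beta}$ is positive semidefinite — which forces $\varphi(\y(h^2))\ge0$ for every $h\in\MP$ — a homomorphism $\varphi\colon\MP\to\R$ is a pseudo-moment evaluation precisely when $\varphi(T)\subseteq\R_{\ge0}$. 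Let $F$ denote the fraction field of $\MP$ and set $\overline T=\{a/b\colon a,b\in T,\ b\ne0\}$. One checks that $\overline T$ is a preordering of $F$, and that (ii) is the statement $f\in\overline T$. Since pseudo-moment evaluations exist (e.g.\ moments of finitely supported measures) and are Zariski dense among all homomorphisms $\MP\to\R$, no nonzero element of $\MP$ vanishes on all of them; hence $-1\notin\overline T$, i.e.\ $\overline T$ is proper.

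I would dispatch (ii)$\Rightarrow$(i) directly: if $fq=p$ with $p,q\in T$, $q\ne0$, then $\varphi(f)=\varphi(p)/\varphi(q)\ge0$ at every pseudo-moment evaluation $\varphi$ with $\varphi(q)\ne0$. Such $\varphi$ are dense among all pseudo-moment evaluations (the truncated moment sequences of finitely supported measures already form such a dense family not annihilated by the nonzero polynomial $q$), and $\varphi\mapsto\varphi(f)$ is continuous, so $\varphi(f)\ge0$ for every pseudo-moment evaluation.

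For the harder direction (i)$\Rightarrow$(ii) the plan is contraposition. Assuming $f\notin\overline T$, the Artin–Schreier theorem (a proper preordering of a field is the intersection of the orderings containing it) yields an ordering $P$ of $F$ with $\overline T\subseteq P$ and $f\notin P$; as $0\in\overline T$ we have $f\ne0$, so $-f\in P\setminus\{0\}$. Passing to a real closure $R$ of $(F,P)$ and composing $\MP\hookrightarrow F\hookrightarrow R$ produces a real closed field $R\supseteq\R$ and a homomorphism $\psi\colon\MP\to R$ with $\psi(T)\subseteq R_{\ge0}$ and $\psi(f)<0$ — an ``$R$-valued pseudo-moment evaluation negating $f$''. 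The remaining task, which I expect to be the crux, is to descend this to an honest $\R$-valued one, contradicting (i).

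For the descent, only the finitely many moments $\y_\gamma$ occurring in $f$ are relevant; enlarge their index set to $\Gamma=\{\gamma\colon|\gamma|\le 2d\}$ for a suitable $d$, and let $\cC_\Gamma\subseteq\R^n\times\R^\Gamma$ be the image of the set of pseudo-moment evaluations under $\varphi\mapsto\bigl(\varphi(x_j),\varphi(\y_\gamma)\bigr)$ — equivalently, the tuples whose moment block is the order-$d$ truncation of some positive semidefinite infinite moment matrix. The data of $\psi$ then give an $R$-point of $\cC_\Gamma\cap\{f<0\}$. The key — and the main obstacle — is to establish that $\cC_\Gamma$ is a closed \emph{semialgebraic} subset of $\R^n\times\R^\Gamma$; i.e.\ that extendability of an order-$d$ positive semidefinite moment matrix to a full positive semidefinite moment matrix is detected at a bounded degree and is thus a first-order condition on the entries. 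I would prove this using the Curto–Fialkow analysis of recursively generated and flat positive semidefinite moment matrices. Granting it, $\cC_\Gamma\cap\{f<0\}$ is semialgebraic over $\R$ and has an $R$-point, so the Tarski transfer principle (model completeness of real closed fields) furnishes an $\R$-point $(\xi,m)$; by construction $m$ extends to a full pseudo-moment sequence, and adjoining the values $\xi$ for the $x_j$ yields a pseudo-moment evaluation $\varphi$ with $\varphi(f)=f(\xi,m)<0$, the desired contradiction.
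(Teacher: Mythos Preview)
Your overall strategy --- Artin--Schreier to produce an ordering on the fraction field, pass to a real closure $R$, then descend by Tarski transfer --- is close in spirit to what the paper does, but there is a genuine gap at the transfer step. To apply Tarski, you need a first-order formula $\Phi$ over $\R$ such that (a) $\Phi$ cuts out $\cC_\Gamma$ over $\R$, and (b) the $R$-point $(\psi(x_j),\psi(\y_\gamma))$ satisfies $\Phi$ over $R$. You only argue for (a), and even there the appeal to Curto--Fialkow is misplaced: their flat-extension theory concerns \emph{measure} representation, not the weaker question of extendability to an infinite PSD moment matrix. The natural candidate for $\Phi$ is ``all principal minors of $H_d$ are $\ge 0$'', but then (b) asks that each principal minor of $\y(H_d)$ be nonnegative under $\psi$ --- and since $\psi$ is only known to be nonnegative on the preordering $T$, this requires showing those minors are quotients of elements of $T$. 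That is precisely the paper's Lemma~3.5 (proved via a trace-polynomial characterization of positive semidefiniteness), and without it the step ``the data of $\psi$ give an $R$-point of $\cC_\Gamma$'' is unjustified. The paper's own Remark~3.2 is a warning here: over a real closed $R\supsetneq\R$, nonnegativity of $\psi(\y(p^2))$ for $p\in\px$ does \emph{not} force the Hankel minors to be nonnegative, so the passage from $\psi(T)\subseteq R_{\ge0}$ to ``$H_d$ PSD over $R$'' genuinely needs Lemma~3.5.

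The paper's proof sidesteps both issues by working from the outset in the finitely generated subring $A$ with the \emph{finitely generated} preordering $P$ of Hankel minors (contained in $\widehat\Omega$ by Lemma~3.5), applying Krivine--Stengle directly to get an $\R$-valued $\phi$ with $\phi(H_d)\succeq 0$ and $\phi(f)<0$, then perturbing to make $\phi(H_d)$ positive definite (Lemma~3.4) and extending one degree at a time (Lemma~3.3). This avoids ever needing to know that \emph{every} PSD $H_d$ extends to a full PSD moment sequence. If you want to salvage your route, you should replace the Curto--Fialkow appeal by these two ingredients: Lemma~3.5 to verify the $R$-point has PSD Hankel matrix over $R$, and the perturbation/extension Lemmas~3.3--3.4 to pass from the $\R$-point produced by Tarski to an honest pseudo-moment evaluation.
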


In other words, (ii) means that $f$ is a quotient of two elements from the preordering in $\MP$ generated by formal moments of squares.
The proof of Theorem \ref{ta:c} relies on the Krivine-Stengle Positivstellensatz and extensions of positive functionals.
The negative answer to (H17) for moment evaluations motivates a search for a different positivity certificate. In \cite{blekherman2022}, nonnegative moment \emph{binomials} are classified in combinatorial terms. For nonnegative (classical) polynomials in $\px$, Lasserre \cite{las06perturb} showed that they become sums of squares of polynomials after an arbitrarily small perturbation of their coefficients. Our second main result generalizes Lasserre's certificate to moment polynomials.

\begin{thmA}[Theorem \ref{t:lass}]\label{ta:d}
If $S_2$ is finite, the following statements are equivalent for $f\in\MP$:
\begin{enumerate}[\rm (i)]
    \item $f\ge0$ on $\bcK(S_1,S_2)\times K(S_1)$;
    \item for every $\ve>0$ there exists $r\in\N$ such that
    $$f+\ve \sum_{j=1}^n\left(\sum_{k=0}^r\frac{x_j^{2k}}{k!}+
    \sum_{\substack{k,\ell\in\N,\\ k\ell\le r}} \frac{\y(x_j^{2k})^\ell}{(k!)^\ell \ell!}
    \right)\in\QM{S_1,S_2}.$$
\end{enumerate}
\end{thmA}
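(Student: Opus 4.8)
The plan is to prove (ii)$\Rightarrow$(i) by hand and to obtain (i)$\Rightarrow$(ii) by adapting the separation argument behind Lasserre's perturbation theorem \cite{las06perturb}, the new feature being that a moment polynomial forces one to tame \emph{two} layers of moments at once — the free variables $x_j$ and the formal moments $\y(x^\alpha)$ — and it is the second layer that will make this result strictly stronger than the pseudo-moment statement, Theorem \ref{t:h17pseudo}. For (ii)$\Rightarrow$(i), note that every generator of $\QM{S_1,S_2}$ and every summand of the perturbation $\theta_r:=\sum_j\big(\sum_{k\le r}x_j^{2k}/k!+\sum_{k\ell\le r}\y(x_j^{2k})^\ell/((k!)^\ell\ell!)\big)$ already lies in $\QM{S_1,S_2}$ — each is a nonnegative multiple of a square or a square times $\y((x_j^k)^2)$ — hence is nonnegative on $\bcK(S_1,S_2)\times K(S_1)$; so $f\ge-\ve\theta_r$ there for the $r$ given by (ii), and since for a fixed $(\mu,\uX)$ the numbers $\theta_r(\mu,\uX)$ increase to the finite value $\sum_j\big(e^{X_j^2}+\sum_{k\ge1}(e^{\int x_j^{2k}\d\mu/k!}-1)\big)$ (finiteness being automatic when $K(S_1)$ is bounded or $\mu$ finitely supported, and otherwise guaranteed by the integrability built into $\bcK(S_1,S_2)$), letting $\ve\downarrow0$ yields $f(\mu,\uX)\ge0$.

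For (i)$\Rightarrow$(ii), fix $\ve>0$ and suppose $f+\ve\theta_r\notin\QM{S_1,S_2}$ for all $r$. Truncating $\QM{S_1,S_2}$ to moment polynomials of bounded degree and separating — here finiteness of $S_2$, together with the standard verification that the truncated cone is closed, is used — yields for each $r$ a linear functional $L_r$ on a finite-dimensional space of moment polynomials, nonnegative on the truncated quadratic module, normalized by $L_r(1)=1$ (positivity on squares forces $L_r(1)>0$), with $L_r(f+\ve\theta_r)\le0$; as $\theta_r-n\in\QM{S_1,S_2}$, this gives $L_r(f)\le-\ve n<0$. The perturbation is tailored so that, combining $\ve L_r(\theta_r)\le|L_r(f)|$ with Cauchy--Schwarz for $L_r$, the certifiable inequalities $\y(x^{2\alpha})-\y(x^\alpha)^2\in\QM{S_1,S_2}$, and the combinatorial control of mixed formal moments by diagonal ones in the spirit of \cite{blekherman2022}, one bounds $L_r(\theta_r)$ — hence $L_r(p)$ for every fixed moment polynomial $p$ — by a constant independent of $r$, exactly as the truncated exponential tames the top-degree moment in \cite{las06perturb}. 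A weak-$*$ limit $L_\infty$ along a subsequence is then a linear functional on $\MP$ that is nonnegative on $\QM{S_1,S_2}$, satisfies $L_\infty(1)=1$ and $L_\infty(f)\le-\ve n<0$, and, by Fatou, obeys $\sum_j\sum_k L_\infty(x_j^{2k})/k!<\infty$ and $\sum_j\sum_{k,\ell}L_\infty(\y(x_j^{2k})^\ell)/((k!)^\ell\ell!)<\infty$.

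The endgame is to realize $L_\infty$ as evaluation at some $(\mu_0,\uX_0)\in\bcK(S_1,S_2)\times K(S_1)$, for then $0\le f(\mu_0,\uX_0)=L_\infty(f)<0$ is absurd. As in the proof of Theorem \ref{t:arch}, the growth bounds render the set of $\QM{S_1,S_2}$-positive functionals obeying them a metrizable compact convex set, so by Choquet's theorem $L_\infty$ is an average of its extreme points, which are the positive characters $\varphi$ of $\MP$ with the same bounds; such a $\varphi$ is evaluation at a point $\uX_0\in K(S_1)$ together with a positive semidefinite (pseudo-)moment vector $m$ satisfying $S_2\ge0$, and the first growth bound supplies Carleman's condition for $(m_{2ke_j})_k$ in each variable. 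The crucial point — the reason the $\ell$-summation occurs, and the reason this is strictly stronger than Theorem \ref{t:h17pseudo} — is that $\varphi(\y(x_j^{2k})^\ell)=m_{2ke_j}^\ell$, so the second growth bound is finite only if $\sum_k m_{2ke_j}/k!<\infty$, again Carleman's condition; hence the multivariate moment problem under Carleman's condition (Nussbaum's theorem and its $K$-version) upgrades the formally positive $m$ to a genuine moment sequence of a measure $\mu_0$ on $K(S_1)$ with $S_2\ge0$, i.e.\ $\mu_0\in\bcK(S_1,S_2)$, with $\varphi(f)=f(\mu_0,\uX_0)$. I expect this synthesis to be the main obstacle: transporting the compact moment-problem machinery of Theorem \ref{t:arch} into the non-compact regime that the two-layer perturbation has effectively ``compactified'', and ensuring the extreme positive functionals are genuine $(\mu,\uX)$-evaluations once Carleman holds; the closedness of the truncated quadratic module and the mixed-versus-diagonal moment estimates are the subsidiary technical points.
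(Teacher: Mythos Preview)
Your outline has the right architecture—separation, uniform bounds from the perturbation, a weak-$*$ limit functional, then Carleman/Nussbaum to extract a genuine measure—and you correctly identify that the double sum in $\Psi_r$ is what forces the pseudo-moment part of a character to satisfy Carleman's condition. But there are two genuine gaps, one minor and one central.

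\medskip
\textbf{(ii)$\Rightarrow$(i).} The claim that $\sup_r\theta_r(\mu,\uX)<\infty$ for every $\mu\in\bcK(S_1,S_2)$ is false in general: measures in $\prob{\R^n}$ are only required to have all moments finite, and these moments can grow arbitrarily fast (e.g.\ $\mu\propto e^{-\sqrt{|t|}}\d t$ has $2k$th moment $(4k+1)!$, so $\sum_k \int x^{2k}\d\mu/k!=\infty$). Your parenthetical ``otherwise guaranteed by the integrability built into $\bcK(S_1,S_2)$'' is therefore wrong. The paper handles this by Tchakaloff's theorem (Proposition~\ref{p:tchakaloff}): finitely supported measures, for which your boundedness argument is immediate, already detect nonnegativity of $f$ since $S_2$ is finite.

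\medskip
\textbf{(i)$\Rightarrow$(ii).} The serious gap is the assertion that the extreme points of your compact convex set of $\QM{S_1,S_2}$-positive functionals with growth bounds are \emph{characters}. Choquet's theorem gives a barycentric decomposition over extreme points, but says nothing about their multiplicativity. For commutative $*$-algebras that are not C$^*$-algebras, extremal positive normalized functionals need not be multiplicative; on $\R[x]$ without growth restrictions, for instance, the extreme points include all N-extremal solutions of indeterminate moment problems, not just point evaluations. The analogy with Theorem~\ref{t:arch} breaks down precisely because there the quadratic module is archimedean, so Kadison--Dubois applies directly; here it is not. The paper's substitute (Proposition~\ref{p:det}) is substantial: run the unbounded GNS construction on $L_\infty$, use the growth bounds to exhibit a dense set of \emph{analytic vectors} for the multiplication operators so that their closures are self-adjoint and pairwise \emph{strongly commuting} (this step also needs the symbolic H\"older inequality, Lemma~\ref{l:holder}, to majorize the mixed $\overline{Y}_{i_1,\dots,i_n}$ by the diagonal ones), whence $\MP$ acts as an integrable representation in Schm\"udgen's sense; only then does the direct-integral decomposition into irreducibles yield one-dimensional pieces, and a further measure-theoretic selection is needed to find one character that simultaneously inherits Carleman-type bounds and satisfies $\varphi(f)\le L_\infty(f)$. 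Your Choquet step, if it can be made to work, would require exactly this machinery to justify it.

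\medskip
A smaller technical point: the truncated quadratic module $\QM{S_1,S_2}_{2r}$ is \emph{not} closed in general (the paper notes the example $S_1=\{-x_1^2\}$, where $x_1+\ve\in\QM{S_1,\emptyset}$ for all $\ve>0$ but $x_1\notin\QM{S_1,\emptyset}$), so your ``standard verification that the truncated cone is closed'' fails. The paper circumvents this via Lemma~\ref{l:closed}, which shows the closure lies in the $\ve$-enlargement, and then runs a primal--dual conic argument (Lemmas~\ref{l:dual} and~\ref{l:upper}) with an auxiliary parameter $M$ in place of your direct separation-and-contradiction.
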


The proof of Theorem \ref{ta:d} uses constructions and techniques from functional analysis, conic programming and duality. When Theorem \ref{ta:d} is restricted to polynomials in $\px$, it improves the approximation result in \cite{LasserreNetzer}, which was established under several restrictive assumptions.

\subsection*{Acknowledgments}
The authors thank the anonymous reviewers for careful reading, thoughtful suggestions and raising crucial points that vastly improved the presentation of the paper, Didier Henrion for the discussion on infinite dimensional-moment problems, and Scott McCullough for sharing his expertise on measure theory and unbounded operators.

\section{Preliminaries}
\label{sec:prelim}

We start by introducing the notation and terminology pertaining to moment polynomials and their evaluations, and establishing some basic facts that are used throughout the paper.
Let $\px=\R[x_1,\dots,x_n]$ be the polynomial ring in $n$ variables.
Consider the polynomial ring in countably many variables
$\mp=\R[\y_{i_1,\dots,i_n}\colon i_j\in \N_0]$ where $\y_{0,\dots,0}:=1$, and denote $\MP=\mp\otimes \px$.
Elements of $\MP$ and $\mp$ are called \emph{moment polynomials} and \emph{pure moment polynomials}, respectively.
There is a canonical unital $\mp$-linear map $\y:\MP\to\mp$ determined by $\y(x_1^{i_1}\cdots x_n^{i_n})=\y_{i_1,\dots,i_n}$.

Recalling a standard notion from real algebra \cite[Section 2.1]{marshallbook}, a subset $M$ of a commutative unital ring $A$ is a \emph{quadratic module} if $1\in M$, $M+M\subseteq M$, and $a^2M\subseteq M$ for $a\in A$.
Given $S_1\subseteq \px$ and $S_2\subseteq \mp$ let $\qm{S_1,S_2}\subseteq\mp$ be the quadratic module in $\mp$ generated by $$\{\y(f^2s)\colon s\in \{1\}\cup S_1,f\in\MP \}\cup S_2,$$
and let $\QM{S_1,S_2}\subseteq\MP$ be the quadratic module in $\MP$ generated by 
$$S_1\cup\{\y(f^2s)\colon s\in \{1\}\cup S_1,f\in\MP \}\cup S_2.$$
Note that $q^2\y(f^2s)=\y((qf)^2s)$ for $q\in\mp$ and $f,s\in\MP$.
Consequently, $\qm{S_1,S_2}$ is the convex hull of
\begin{equation}\label{e:gens1}
\y(f^2s_1), \quad q^2s_2
\end{equation}
for $s_1\in \{1\}\cup S_1$, $s_2\in S_2$, $f\in\MP$ and $q\in\mp$. Similarly, 
$\QM{S_1,S_2}$ is the convex hull of
\begin{equation}\label{e:gens2}
\quad f^2\y(g^2s), \quad f^2t
\end{equation}
for $s\in \{1\}\cup S_1$, $t\in S_1\cup S_2$ and $f,g\in\MP$.
Also, let $\QM{S_1}\subset\px$ denote the quadratic module in $\px$ generated by $S_1$.

\begin{remark}
Observe that $\qm{S_1,S_2}\subseteq \QM{S_1,S_2}\cap\mp\subseteq\y(\QM{S_1,S_2})$, and these inclusions are strict in general (the first one because of term cancellations, and the second one because of terms of the form $\y(f^2)\y(g^2)$).
Therefore, for a pure moment polynomial, membership in $\qm{S_1,S_2}$ is a stronger property than membership in $\QM{S_1,S_2}$. 
Thus when stating our results for moment polynomials and $\QM{S_1,S_2}$, we also state refinements for pure moment polynomials and $\qm{S_1,S_2}$,
and the proofs are analogous. The reason for persisting with $\qm{S_1,S_2}$ is that it leads to smaller optimization problems than $\y(\QM{S_1,S_2})$.
\end{remark}

There is a natural notion of a degree $\deg$ on $\MP$ satisfying
$$\deg x_j=1,\qquad\deg \y_{i_1,\dots,i_n}=i_1+\cdots+i_n$$
and $\deg(fg)=\deg(f)+\deg(g)$ for $f,g\in\MP$.
For $r\in\N$ let $\px_r,\mp_r,\MP_r$ be the finite-dimensional subspaces of $\px,\mp,\MP$ of elements of degree at most $r$.
Also, let
$\qm{S_1,S_2}_{2r}\subseteq\mp_{2r}$ and $\QM{S_1,S_2}_{2r}\subseteq\MP_{2r}$ be the convex hulls of elements in $\MP_{2r}$ of the form \eqref{e:gens1} and \eqref{e:gens2}, respectively.

The following lemma identifies certain non-obvious elements of $\qm{\emptyset,\emptyset}$ that are required later.
It can be viewed as a formal special case of H\"older's inequality \cite[Theorem 3.5]{rudin}.

\begin{lemma}[Symbolic H\"older's inequality]\label{l:holder}
Let $k\in\N_0$.
\begin{enumerate}[\rm (1)]
\item If $n=1$ then $\y_{2k}-\y_1^{2k}\in\qm{\emptyset,\emptyset}$.
\item If $n\in\N$ and $i_1,\dots,i_n\in\N_0$ then $\y_{2k i_1,\dots,2k i_n}-\y_{i_1,\dots,i_n}^{2k}
\in\qm{\emptyset,\emptyset}$.
\end{enumerate}
\end{lemma}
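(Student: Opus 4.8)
The plan is to reduce part (2) to part (1) by a substitution, and to prove part (1) by exhibiting an explicit sum-of-squares-of-moments representation that mimics the one-variable Hölder / power-mean inequality $\bE(X^{2k})\ge\bE(X)^{2k}$ (equivalently, the telescoping chain $\bE(|Y|)^2\le\bE(Y^2)$ applied iteratively). Concretely, for part (1) I would introduce the moment polynomials $p_0=x_1$ and, more generally, think of the dyadic tower $\bE(X^{2^m})$; however since a general even exponent $2k$ need not be a power of $2$, the cleaner route is the following telescoping identity for a random variable $X$ with all moments:
\[
\bE(X^{2k})-\bE(X)^{2k}
=\sum_{j=0}^{2k-2}\bE(X)^{j}\Big(\bE(X^{2k-j})-\bE(X)\,\bE(X^{2k-1-j})\Big)\cdot(\text{nonneg. bookkeeping}),
\]
which however mixes signs; instead I would use the genuinely ``square'' decomposition coming from Cauchy–Schwarz: $\bE(X^{a})^2\le \bE(X^{a-1})\bE(X^{a+1})$ for $1\le a\le 2k-1$ (log-convexity of moments), because $\bE(X^{a-1})\bE(X^{a+1})-\bE(X^a)^2=\bE\!\big((\tfrac12(sX^{a-1}+s^{-1}X^{a+1})-X^a)(\cdots)\big)$ is, symbolically, of the form $\y(f^2)$ after clearing the parameter $s$. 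Chaining these log-convexity inequalities gives $\bE(X)^{2k}\le\bE(X^{2k})$, and each individual step must be realized as an element of $\qm{\emptyset,\emptyset}$, i.e.\ a convex combination of terms $\y(f^2)$ and $q^2\,\y(g^2)$ with $f,g\in\MP$, $q\in\mp$.

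For a single log-convexity step I would write, for $a\ge1$,
\[
\y_{a-1}\,\y_{a+1}-\y_a^2
=\tfrac14\Big(\y\big((x_1^{a-1}-x_1^{a+1})^2\big)+\big(\y_{a-1}-\y_{a+1}\big)^2\Big)-\big(\y_{a-1}-\y_a\big)\big(\y_a-\y_{a+1}\big)-\cdots
\]
— but rather than hunt for such an ad hoc identity, the robust argument is: $\y_{a-1}\y_{a+1}-\y_a^2$ equals the ``formal covariance determinant'' $\det\begin{pmatrix}\y_{a-1}&\y_a\\ \y_a&\y_{a+1}\end{pmatrix}$, and the Hankel-type identity $\y_{a-1}\y_{a+1}-\y_a^2=\y(x_1^{a-1})\y(x_1^{a+1})-\y(x_1^a)^2$ can be checked to lie in $\qm{\emptyset,\emptyset}$ because $\y_{a-1}\y_{a+1}+\y_{a-1}\y_{a+1}-2\y_a^2=\y\big((x_1^{a+1})\cdot x_1^{a-1}\big)+\cdots$; more usefully, multiplying through, $\y_{a-1}^2(\y_{a-1}\y_{a+1}-\y_a^2)$ and similar products telescope. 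I would therefore prove the sharper homogeneous statement: there exist $N\in\N$ and an element $\sigma\in\qm{\emptyset,\emptyset}$ with $\y_1^{2N}(\y_{2k}-\y_1^{2k})=\sigma$, then clear the factor — but since $\qm{\emptyset,\emptyset}$ is not closed under division, the actual execution must keep everything additive. The cleanest honest path: induct on $k$ using $\y_{2k}-\y_1^{2k}=(\y_{2k}-\y_2^{k})+(\y_2^k-\y_1^{2k})$, where $\y_{2k}-\y_2^k\in\qm{\emptyset,\emptyset}$ follows from part (2) with $n=1,i_1=2$ (i.e.\ the $X\mapsto X^2$ substitution turns it into the $k'=k$, exponent-$2$ case handled directly by $\y(x_1^2\cdot x_1^{2j})$ Cauchy–Schwarz chaining), and $\y_2^k-\y_1^{2k}=(\y_2-\y_1^2)\cdot(\y_2^{k-1}+\y_2^{k-2}\y_1^2+\cdots+\y_1^{2k-2})$ with $\y_2-\y_1^2=\y((x_1-\y_1)^2)$ — wait, $\y_1$ is a scalar symbol not a polynomial, so I must instead write $\y_2-\y_1^2=\y(x_1^2)-\y(x_1)^2=\tfrac12\y\big((x_1-x_1)^2\big)$ which is $0$; the correct symbolic identity is $\y_2-\y_1^2=\tfrac12\,\y\big(x_1^2\big)+\tfrac12\,\y\big(x_1^2\big)-\y_1^2$, useless. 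The genuine identity is $2(\y_2-\y_1^2)=\y(x_1^2)\cdot 1+1\cdot\y(x_1^2)-2\y(x_1)^2$ and one uses $\y(x_1^2)-2a\,\y(x_1)+a^2=\y((x_1-a)^2)\ge0$ for the scalar $a=\y_1\in\mp$, i.e. $\y((x_1-\y_1)^2)=\y_2-2\y_1\y_1+\y_1^2=\y_2-\y_1^2$ — this works since $\y_1\in\mp$ acts as a scalar and $\y$ is $\mp$-linear. So $\y_2-\y_1^2=\y((x_1-\y_1)^2)\in\qm{\emptyset,\emptyset}$, and the cofactor in the factorization of $\y_2^k-\y_1^{2k}$ is a sum of monomials in $\y_1^2,\y_2$ hence (being a scalar square times $\qm{\emptyset,\emptyset}$) the product stays in $\qm{\emptyset,\emptyset}$ — no, the cofactor is not a square. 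Here I would instead observe $\y_2^k-\y_1^{2k}=\y_2^k-(\y_1^2)^k$ and apply part (2) again with the substitution $x_1\mapsto x_1$, exponent $i_1=1$, exponent $2k\to 2\cdot?$… this is circular, confirming that the real work is a direct induction establishing both parts simultaneously.

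Here is the argument I would actually commit to. Prove part (2) directly, with part (1) as the case $i_1=\cdots=i_{n-1}=0$, $i_n=1$ after relabeling — no: for part (2), set $m=x_1^{i_1}\cdots x_n^{i_n}\in\px$, so $\y(m^{2k})=\y_{2ki_1,\dots,2ki_n}$ and $\y(m)=\y_{i_1,\dots,i_n}$, and we want $\y(m^{2k})-\y(m)^{2k}\in\qm{\emptyset,\emptyset}$. Write $a:=\y(m)\in\mp$. By $\mp$-linearity of $\y$, for the scalar $a\in\mp$ we get $\y\big((m^{j}-a\,m^{j-1})^2\big)=\y(m^{2j})-2a\,\y(m^{2j-1})+a^2\y(m^{2j-2})\in\qm{\emptyset,\emptyset}$ for each $j\ge1$ (it is literally $\y$ of a square of an element of $\MP$, since $a\in\mp\subseteq\MP$). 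Summing $\y\big((m^j-a m^{j-1})^2\big)$ for $j=1,\dots,k$ with suitable telescoping coefficients — specifically the Cauchy–Schwarz chain $a^{2k}=a^{2k}\y(m^0)\le a^{2k-2}\y(m^2)\le a^{2k-4}\y(m^4)\le\cdots\le\y(m^{2k})$, where the $\ell$-th step $a^{2(k-\ell+1)}\y(m^{2\ell-2})\le a^{2(k-\ell)}\y(m^{2\ell})$ rearranges to $a^{2(k-\ell)}\big(\y(m^{2\ell})-a^2\y(m^{2\ell-2})\big)$, and $\y(m^{2\ell})-a^2\y(m^{2\ell-2})\ge\y(m^{2\ell})-2a\y(m^{2\ell-1})+a^2\y(m^{2\ell-2})-\big(\y(m^{2\ell})-2a\y(m^{2\ell-1})+a^2\y(m^{2\ell-2})\big)$ — I need $\y(m^{2\ell})-a^2\y(m^{2\ell-2})$ itself in $\qm{\emptyset,\emptyset}$, which is the single Cauchy–Schwarz step $\y(m^{2\ell-2})\y(m^{2\ell})\ge\y(m^{2\ell-1})^2$ divided by $\y(m^{2\ell-2})$ — again a division. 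The fix, finally clean: prove $a^{2(k-\ell)}\big(\y(m^{2\ell})-a^2\y(m^{2\ell-2})\big)+a^{2(k-\ell)}\big(\text{correction}\big)$; instead just use $\y\big((m^\ell - a^\ell)^2\big)=\y(m^{2\ell})-2a^\ell\y(m^\ell)+a^{2\ell}\in\qm{\emptyset,\emptyset}$ and $\y(m^{2\ell-1}\cdots)$... The slick closing move is: $\y\big((m^k-a\,m^{k-1}+a^2 m^{k-2}-\cdots\pm a^{k})\big)$ — not a square. I will therefore in the writeup use the standard trick from \cite{marshallbook}: induct on $k$; for the inductive step use $\y(m^{2k})-a^{2k}=\big(\y(m^{2k})-a^2\y(m^{2k-2})\big)+a^2\big(\y(m^{2k-2})-a^{2k-2}\big)$, the second summand being $a^2$ (a square in $\mp$) times an element of $\qm{\emptyset,\emptyset}$ by induction hence in $\qm{\emptyset,\emptyset}$, and the first summand handled by the base-type fact $\y(m^{2j})-a^2\y(m^{2j-2})\in\qm{\emptyset,\emptyset}$, which is itself the $k=1$ case applied to the pair of polynomials $(m^{j-1}, m)$ after noting $\y((m^j-a m^{j-1})^2)=\y(m^{2j})-2a\y(m^{2j-1})+a^2\y(m^{2j-2})$ and $\y((m^j+a m^{j-1})^2)=\y(m^{2j})+2a\y(m^{2j-1})+a^2\y(m^{2j-2})$, whose average is exactly $\y(m^{2j})+a^2\y(m^{2j-2})$ — giving $\y(m^{2j})+a^2\y(m^{2j-2})\in\qm{\emptyset,\emptyset}$, not the difference. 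So the clean identity I want is the plain one: $\y(m^{2j})-a^2\y(m^{2j-2})=\y((m^j-a m^{j-1})^2)+2a\big(\y(m^{2j-1})-a\y(m^{2j-2})\big)$, reducing to lower ``width'', and the base case $j=1$: $\y(m^2)-a^2=\y(m^2)-2a\y(m)+a^2=\y((m-a)^2)$ since $a=\y(m)$ — \emph{this} is the true base case and it closes the induction. I expect the main obstacle to be precisely this bookkeeping: organizing the telescoping so that every intermediate term is manifestly a convex combination of $\y(f^2)$'s and $q^2$-multiples, never requiring division in the non-closed cone $\qm{\emptyset,\emptyset}$, and handling the exponent $2k$ when it is not a power of two by the $k\to k-1$ induction above rather than dyadic doubling. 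Once part (2) is established, part (1) is the special case $n=1$, $i_1=1$.
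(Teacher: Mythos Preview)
Your final committed argument has a genuine gap in the inductive step. You correctly obtain the base case $\y(m^2)-a^2=\y((m-a)^2)\in\qm{\emptyset,\emptyset}$ (with $a=\y(m)$) and the splitting
\[
\y(m^{2k})-a^{2k}=\big(\y(m^{2k})-a^2\y(m^{2k-2})\big)+a^2\big(\y(m^{2k-2})-a^{2k-2}\big),
\]
where the second summand is a square in $\mp$ times an element of $\qm{\emptyset,\emptyset}$ by induction on $k$. For the first summand you invoke
\[
\y(m^{2j})-a^2\y(m^{2j-2})=\y\big((m^j-am^{j-1})^2\big)+2a\big(\y(m^{2j-1})-a\y(m^{2j-2})\big)
\]
and assert this ``reduces to lower width'' with base case $j=1$. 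It does not: the residual term carries the factor $2a$, which is not a square in $\mp$, and $\y(m^{2j-1})-a\y(m^{2j-2})$ is not in $\qm{\emptyset,\emptyset}$ --- it is not even nonnegative under moment evaluations (take $m=x_1$, $\mu$ uniform on $\{-1,0\}$, $j=2$: the value is $-\tfrac14$). Nor is this residual of the same shape as the left-hand side, so no induction on $j$ is actually set up. This is exactly the obstacle you repeatedly flagged --- that $\qm{\emptyset,\emptyset}$ is not closed under multiplication by non-squares --- and the proposal does not overcome it.

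The paper's proof takes a different route: for part~(1) it writes down, in one stroke, an explicit identity
\[
\y_{2k}-\y_1^{2k}=k\,\y\big((\y_1^k-x_1\y_1^{k-1})^2\big)+\sum_{i=0}^{\ell-1}2^i\,\y\big((x_1^{r_i}\y_1^{k-r_i}-x_1^{a_i}\y_1^{k-a_i})^2\big),
\]
where $\ell=\lceil\log_2 k\rceil$ and the $a_i,r_i\in\{0,1\}$ come from a dyadic halving scheme (essentially the binary expansion of $k$, handling non-powers-of-two via carries). Part~(2) is then deduced from part~(1) by the substitution homomorphism $x_1\mapsto x_1^{i_1}\cdots x_n^{i_n}$, $\y_k\mapsto\y_{ki_1,\dots,ki_n}$ --- the same reduction you identified, run in the opposite direction.
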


\begin{proof}
\def\r{{\rm r}}
(1) The case $k=0$ holds because $\y_0-\y_1^0=0$, and the case $k=1$ holds because $\y_2-\y_1^2=\y((\y_1-x_1)^2)$.
For $k\ge2$ and $\ell=\lceil \log_2 k\rceil$ let $a_0,\dots,a_{\ell-1}$ be recursively defined as $a_0=k$ and $a_{i+1}=\lceil \frac{a_i}{2}\rceil$. Denote $\r(a_i)=0$ if $a_i$ is even and $\r(a_i)=1$ if $a_i$ is odd. Note that $a_i+\r(a_i)=2a_{i+1}$ for $0\le i<\ell-1$, and $a_{\ell-1}=2$.
Furthermore, interpreting $\r(a_i)$ as digits in a binary expansion shows that
\begin{equation}\label{e:bin}
	2^\ell-k=\sum_{i=0}^{\ell-1}2^i\r(a_i),\qquad
	k-1=\sum_{i=0}^{\ell-1}2^i(1-\r(a_i)).
\end{equation}
We claim that
\begin{equation}\label{e:lucky}
	\y_{2k}-\y_1^{2k} = 
	k\,\y\left(\Big(\y_1^k-x_1\y_1^{k-1}\Big)^2\right)
	+\sum_{i=0}^{\ell-1}2^i\,\y\left(\Big(
	x_1^{\r(a_i)}\y_1^{k-\r(a_i)}-x_1^{a_i}\y_1^{k-a_i}
	\Big)^2\right).
\end{equation}
Indeed, the right-hand side of \eqref{e:lucky} expands as
\begin{align*}
	& k \left(\y_2\y_1^{2(k-1)}-\y_1^{2k}\right)
	+\sum_{i=0}^{\ell-1}2^i \left(
	\y_{2\r(a_i)}\y_1^{2(k-\r(a_i))}
	-2\y_{a_i+\r(a_i)}\y_1^{2k-(a_i+\r(a_i))}
	+\y_{2a_i}y_1^{2(k-a_i)}
	\right) \\
	=\,& k \left(\y_2\y_1^{2(k-1)}-\y_1^{2k}\right)
	+\sum_{i=0}^{\ell-1}2^i
	\y_{2\r(a_i)}\y_1^{2(k-\r(a_i))}
	+\y_{2a_0}\y_1^{2(k-a_0)}
	-2^\ell \y_{a_{\ell-1}+\r(a_{\ell-1})}\y_1^{2k-(a_{\ell-1}+\r(a_{\ell-1}))} \\
	=\,& k \left(\y_2\y_1^{2(k-1)}-\y_1^{2k}\right)
	+\y_{2k}-2^\ell \y_2\y_1^{2(k-1)}
	+\sum_{i=0}^{\ell-1}2^i
	\y_{2\r(a_i)}\y_1^{2(k-\r(a_i))} \\
	=\,& \y_{2k}-\y_1^{2k}
	-\left(
	(k-1)\y_1^{2k}+(2^\ell-k)\y_2\y_1^{2(k-1)}
	\right)
	+\sum_{i=0}^{\ell-1}2^i
	\y_{2\r(a_i)}\y_1^{2(k-\r(a_i))} \\
	=\,& \y_{2k}-\y_1^{2k},
\end{align*}
where the last equation holds by \eqref{e:bin}.

(2)
Given $i_1,\dots,i_n\in\N_0$, consider the homomorphism
$\Theta:\R[x_1,\y_k\colon k\in\N]\to\MP$ determined by
$\Theta(x_1)=x_1^{i_1}\cdots x_n^{i_n}$ and $\Theta(\y_k)=\y_{ki_1,\dots,ki_n}$. Then $\Theta$ intertwines with the map $\y$ (i.e., $\Theta\circ \y=\y\circ\Theta$), so applying it to $\y_{2k}-\y_1^{2k}\in\qm{\emptyset,\emptyset}$ results in $\y_{2k i_1,\dots,2k i_n}-\y_{i_1,\dots,i_n}^{2k}
\in\qm{\emptyset,\emptyset}$.
\end{proof}

\begin{remark}
Not all cases of H\"older's inequality admit a symbolic interpretation as in Lemma \ref{l:holder}. For example, $\y_{11}\y_{22}-\y_{12}^2 \notin \qm{\emptyset,\emptyset}$. Indeed, if $f_1,\dots,f_\ell\in\MP$ and $uv$ with $u\in\mp$ and $v\in [\ux]$ is a monomial in one of the $f_j$ with maximal $\deg v$, then $\y(\sum_jf_j^2)$ contains the monomial $u^2 \y(v^2)$ with a positive coefficient, and consequently $\y(\sum_jf_j^2)\neq \y_{11}\y_{22}-\y_{12}^2$.
\end{remark}

\subsection{Moment evaluations of moment polynomials}

There are two natural (and closely related) types of evaluations of moment polynomials.
For a closed (but not necessarily bounded) set $K\subseteq\R^n$ let $\prob{K}$ denote the set of Borel probability measures $\mu$ on $\R^n$ that are supported on $K$ and admit all marginal moments (that is, $\int x_j^{2k}\d \mu<\infty$ for all $j=1,\dots,n$ and all $k\in\N$). By H\"older's inequality, it follows that such measures admit all mixed moments. Note that a Borel probability measure on $\R^n$ is always a Radon measure by \cite[Theorem II.3.2]{parthasarathy2005}.
Each pair $(\mu,\uX)\in\prob{\R^n}\times \R^n$ gives rise to the homomorphism
\begin{equation}\label{e:hom1}
\MP\to\R,\quad f\mapsto f(\mu,\uX)
\end{equation}
determined by 
$$\y_{i_1,\dots,i_n}\mapsto \int x_1^{i_1}\cdots x_n^{i_n}\d \mu,\qquad
x_j\mapsto X_j.$$
Such homomorphisms are called \emph{moment evaluations}.

Let $(\cX,\Sigma,\pi)$ be a probability space, where $\Sigma$ is a \sigalg on $\cX$, and $\pi:\Sigma\to\R$ is a probability measure. For $p\in\N$ let $\cL^p(\cX,\Sigma,\pi)$ be the space of real-valued random variables $F$ on $\cX$ such that $\int |F|^p\d\pi<\infty$. There is a partial order $\succeq$ on $\cL^p(\cX,\Sigma,\pi)$, given as $f\succeq g$ if $f\ge g$ almost everywhere.
Consider the ring $\cL^\omega(\pi):=\bigcap_{p=1}^\infty \cL^p(\cX,\Sigma,\pi)$ introduced in \cite{arens}.
For $\uF=(F_1,\dots,F_n)\in \cL^\omega(\pi)^n$, all the mixed moments of $\uF$ exist, and we can define the homomorphism
\begin{equation}\label{e:hom2}
\MP\to \cL^\omega(\pi),\quad f\mapsto f[\pi,\uF]
\end{equation}
determined by 
$$\y_{i_1,\dots,i_n}\mapsto \int F_1^{i_1}\cdots F_n^{i_n}\d \pi,\qquad
x_j\mapsto F_j.$$
The restrictions of homomorphisms \eqref{e:hom1} and \eqref{e:hom2} to $\mp$ coincide (when we view $\R^n$ as a probability space with the Borel \sigalg). 
Observe that the homomorphism \eqref{e:hom2} intertwines $\y:\MP\to\mp$ and integration with respect to $\pi$. In contrast, the homomorphism \eqref{e:hom1} does not satisfy such an intertwining property.

For $S_1\subseteq \px$ and $S_2\subseteq\mp$ let
\begin{align*}
	K(S_1)&=\{\uX\in\R^n\colon p(\uX)\ge0 \text{ for all } p\in S_1\} \subseteq\R^n,\\
	\bcK(S_1,S_2)&=\{\mu\in\prob{K(S_1)}\colon s(\mu)\ge0 \text{ for all } s\in S_2\}\subseteq\prob{K(S_1)}.
\end{align*}

The following proposition indicates that evaluations \eqref{e:hom1} and \eqref{e:hom2} are essentially equivalent from the perspective of moment polynomial positivity.

\begin{proposition}\label{p:evals}
	Let $S_1\subseteq\px$ and $S_2\subseteq\mp$. The following statements are equivalent for $f\in\MP$: 
\begin{enumerate}[\rm (i)]
	\item $f(\mu,\uX)\ge0$ for all $(\mu,\uX)\in \bcK(S_1,S_2)\times K(S_1)$;
	\item $f[\pi,\uF]\succeq0$ for every probability measure $\pi$ and a random variable $\uF\in\cL^\omega(\pi)^n$ with values in $K(S_1)^n$ such that $s[\pi,\uF]\ge0$ for all $s\in S_2$.
\end{enumerate}
\end{proposition}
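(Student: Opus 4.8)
The plan is to prove the two implications separately. The implication (ii)$\Rightarrow$(i) is the easy direction: given $(\mu,\uX)\in\bcK(S_1,S_2)\times K(S_1)$, I would build a suitable probability space and a random variable realizing both $\mu$ and the constant point $\uX$ simultaneously, and then read off $f(\mu,\uX)$ from $f[\pi,\uF]$. Concretely, take $(\cX,\Sigma,\pi)=(\R^n,\text{Borel},\mu)$ and let $\uF=(F_1,\dots,F_n)$ be... no, that realizes $\mu$ but not $\uX$. Instead take the product space $\cX=\R^n\times\R^n$ with $\pi=\mu\otimes\delta_{\uX}$ (a point mass at $\uX$), and let $\uF$ be the projection onto the \emph{first} factor; then $\int F_1^{i_1}\cdots F_n^{i_n}\d\pi=\int x_1^{i_1}\cdots x_n^{i_n}\d\mu$, so the pure moments agree with those of $\mu$. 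This does not yet see $\uX$. The fix is to use two groups of variables after all, or more simply: since (i) only asks about $f(\mu,\uX)$ pointwise, take $\cX$ to be a one-point space with $\pi$ the trivial measure and $\uF=\uX$; then $f[\pi,\uF]$ is the constant $f(\mu',\uX)$ where $\mu'=\delta_{\uX}$. That realizes $\uX$ but forces $\mu=\delta_{\uX}$. The clean construction that realizes an \emph{arbitrary} pair $(\mu,\uX)$ is this: take $\cX=\R^n$, $\Sigma$ the Borel $\sigma$-algebra, and for the measure take a combination — but $\uF$ must be a single $\cL^\omega(\pi)^n$-valued random variable and its law is what computes the $\y_{i_1,\dots,i_n}$, while its \emph{value} at a chosen sample point computes the $x_j$. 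So: let $\pi=\mu$ on $\cX=\R^n$, let $\uF$ be the identity map (so $F_j\in\cL^\omega(\mu)$ since $\mu\in\prob{K(S_1)}$ has all marginal moments and takes values in $K(S_1)$), and note $s[\mu,\uF]=s(\mu)\ge0$ for $s\in S_2$. Then $f[\mu,\uF]\succeq0$ by (ii), i.e. $f[\mu,\uF]\ge0$ $\mu$-a.e.; in particular $f[\mu,\uF]$ evaluated at $\mu$-a.e.\ sample point $\omega$ equals $f(\mu,\uF(\omega))=f(\mu,\omega)$, and $\uF(\omega)\in K(S_1)$. This shows $f(\mu,\uX)\ge0$ for $\mu$-a.e.\ $\uX\in K(S_1)$ — but not for \emph{every} $\uX$ in $K(S_1)$. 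To upgrade to every $\uX$, observe that for a fixed $\uX_0\in K(S_1)$ we may instead run the argument with the measure $\pi=\tfrac12\mu+\tfrac12\delta_{\uX_0}$ on $\R^n$ (still in $\prob{K(S_1)}$, still with all marginal moments, still satisfying $S_2$ if $\mu$ does — here one needs $S_2$-constraints to be preserved under this mixture, which will \emph{not} hold for general $S_2$). This is the crux of the difficulty and I address it below.

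For the implication (i)$\Rightarrow$(ii), the key observation is a disintegration/conditioning argument. Suppose $\uF\in\cL^\omega(\pi)^n$ takes values in $K(S_1)^n$ and satisfies $s[\pi,\uF]\ge0$ for all $s\in S_2$. For $\omega\in\cX$ let $\mu:=\uF_*\pi$ be the pushforward (law) of $\uF$; this is a single fixed measure, independent of $\omega$, supported on $K(S_1)$, with all marginal moments (since $F_j\in\cL^\omega(\pi)$), and with $s(\mu)=s[\pi,\uF]\ge0$ for $s\in S_2$, hence $\mu\in\bcK(S_1,S_2)$. Now for $\pi$-a.e.\ $\omega$ we have $f[\pi,\uF](\omega)=f(\mu,\uF(\omega))$, because the map \eqref{e:hom2} sends $\y_{i_1,\dots,i_n}$ to the \emph{constant} $\int F_1^{i_1}\cdots F_n^{i_n}\d\pi=\int x_1^{i_1}\cdots x_n^{i_n}\d\mu$ and sends $x_j$ to the random variable $F_j$, so the value at $\omega$ of $f[\pi,\uF]$ is exactly $f$ evaluated at the moment data of $\mu$ and the point $\uF(\omega)\in K(S_1)$. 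Since $(\mu,\uF(\omega))\in\bcK(S_1,S_2)\times K(S_1)$, hypothesis (i) gives $f(\mu,\uF(\omega))\ge0$; as $\omega$ ranges over a set of full $\pi$-measure, $f[\pi,\uF]\succeq0$. This direction is essentially bookkeeping: matching the two definitions of evaluation, using that $\y$ produces constants in \eqref{e:hom2}, and invoking H\"older (as recorded before Proposition \ref{p:evals}) to guarantee all mixed moments exist.

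Returning to (ii)$\Rightarrow$(i): the honest route is the one sketched, namely for each fixed target pair $(\mu,\uX_0)$ produce a probability space and an $\uF$ realizing exactly that pair. The cleanest device is to take $\cX=\{0\}\cup\R^n$ with a measure of the form $(1-\epsilon)\delta_0 + \epsilon\,\nu$... but this again changes the pure-moment values. The genuinely correct construction, which decouples the law of $\uF$ from its value at one point, is: let $(\cX,\Sigma,\pi)$ be \emph{any} standard probability space on which $\mu$ can be realized as the law of some $\uG\in\cL^\omega(\pi)^n$ with values in $K(S_1)$ — e.g.\ $(\R^n,\text{Borel},\mu)$ with $\uG=\id$ — and then, because $\pi$ is atomless or because we can enlarge $\cX$ by a product with $([0,1],\text{Leb})$, choose a measurable set $A$ of positive measure on which we \emph{redefine} $\uG$ to be the constant $\uX_0\in K(S_1)$, calling the result $\uF$. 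Redefining on a positive-measure set changes $\uF$'s law, hence changes the moments $\int F_1^{i_1}\cdots\d\pi$ — so this still does not preserve $(\mu,\uX_0)$. The resolution the authors surely intend is simpler: statement (i) is a statement about \emph{all} pairs, and statement (ii) about \emph{all} $(\pi,\uF)$; to get (i) it suffices, for each $(\mu,\uX_0)$, to exhibit \emph{one} $(\pi,\uF)$ with $\mu=\uF_*\pi$, with $\uF$ satisfying the $S_2$-constraints (automatic since $\uF_*\pi=\mu\in\bcK(S_1,S_2)$), and with $\uX_0\in\uF(\cX)$; then $f[\pi,\uF](\omega_0)=f(\mu,\uX_0)\ge0$ at a point $\omega_0$ with $\uF(\omega_0)=\uX_0$ — \emph{provided} $\omega_0$ lies in the full-measure set where $f[\pi,\uF]\ge0$. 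So one needs $\uX_0$ attained on a positive-measure set. Take $\cX=\R^n$, $\Sigma$ Borel, $\pi=\mu$, and — crucially — replace $\uX_0$ by noting $\mu$ ranges over all of $\bcK(S_1,S_2)$: given $(\mu,\uX_0)$, apply (ii) to the \emph{pair} $(\mu,\uF=\id)$ to conclude $f(\mu,\cdot)\ge0$ $\mu$-a.e.\ on $K(S_1)$; then run this for the family of measures $\mu_t=(1-t)\mu+t\delta_{\uX_0}$, $t\in(0,1)$ — each $\mu_t\in\prob{K(S_1)}$ with all marginal moments and $\mu_t\to\mu$, $\mu_t(\{\uX_0\})>0$ — to get $f(\mu_t,\uX_0)\ge0$, and let $t\to0$ using continuity of $f$ in the moment data. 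The only gap is whether $\mu_t\in\bcK(S_1,S_2)$, i.e.\ whether the mixture satisfies $S_2$; since $S_2$ may contain nonlinear pure moment polynomials this can fail. I expect the paper to handle this either by a convexity/density argument specific to how $\bcK(S_1,S_2)$ sits inside $\prob{K(S_1)}$, or by a direct functional-analytic construction of $(\pi,\uF)$ on a richer space (a disjoint union realizing $\mu$ on one part and threading $\uX_0$ through the a.e.-positivity set), and I regard \textbf{this $S_2$-compatibility of the approximating construction} as the main obstacle; the rest is routine identification of the two evaluation maps and an appeal to H\"older's inequality for existence of mixed moments.
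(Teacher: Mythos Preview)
Your argument for (i)$\Rightarrow$(ii) is correct and is exactly the paper's argument: take $\mu=\uF_*\pi$, note $\mu\in\bcK(S_1,S_2)$, and observe $f[\pi,\uF](P)=f(\mu,\uF(P))\ge0$ for every $P\in\cX$.

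For (ii)$\Rightarrow$(i) the paper's proof is a single sentence: it asserts that (i) is a special case of (ii), namely one takes $\cX=K(S_1)$, $\pi=\mu$, $\uF=\id$. This is precisely the construction you tried first and rightly found wanting: it yields only $f(\mu,\uX)\ge0$ for $\mu$\nobreakdash-a.e.\ $\uX$, not for every $\uX\in K(S_1)$. The $S_2$-compatibility obstacle you flag is genuine, and in fact the implication (ii)$\Rightarrow$(i) is \emph{false} as stated. Here is a counterexample. Let $n=1$, $S_1=\emptyset$ (so $K(S_1)=\R$), and
\[
S_2=\bigl\{-\y_1^2,\ -(\y_2-1)^2,\ -\y_3^2,\ -(\y_4-1)^2\bigr\},
\]
so $\bcK(S_1,S_2)$ consists of $\mu\in\prob{\R}$ with $\y_1(\mu)=\y_3(\mu)=0$ and $\y_2(\mu)=\y_4(\mu)=1$. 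Equality in Cauchy--Schwarz $\y_2^2\le \y_0\y_4$ forces $x^2=1$ $\mu$-a.e., and then $\y_1=0$ gives $\mu_0=\tfrac12\delta_{-1}+\tfrac12\delta_1$; hence $\bcK(S_1,S_2)=\{\mu_0\}$. Take $f=x_1^2-1$. Then $f(\mu_0,0)=-1<0$, so (i) fails. For (ii): any admissible $(\pi,F)$ has $F_*\pi\in\bcK(S_1,S_2)=\{\mu_0\}$, hence $F^2=1$ $\pi$-a.e., so $f[\pi,F]=F^2-1=0$ $\pi$-a.e.\ and $f[\pi,F]\succeq0$. Thus (ii) holds while (i) does not.

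So your instinct was correct and the paper's one-line argument (as written with the a.e.\ order $\succeq$) does not close the gap. Two remarks on salvage. First, the equivalence does hold for pure $f\in\mp$, since then $f[\pi,\uF]$ is the constant $f(\uF_*\pi)$; this already covers the paper's actual applications in Sections~\ref{sss:cov} and~\ref{sss:biloc}. Second, the full equivalence becomes correct if $\succeq0$ in (ii) is replaced by pointwise $\ge0$ on all of $\cX$ (the paper's proof of (i)$\Rightarrow$(ii) in fact establishes pointwise nonnegativity): then for (ii)$\Rightarrow$(i) one may modify $\uF=\id$ on a $\mu$-null set to hit the prescribed $\uX_0$ without changing any moments, and read off $f(\mu,\uX_0)\ge0$ at that point.
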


\begin{proof}
(i)$\Rightarrow$(ii): Suppose a probability space $(\cX,\Sigma,\pi)$ and a random variable $\uF$ on $\cX$ with values in $K(S_1)^n$ satisfy $s[\pi,\uF]\ge0$ for all $s\in S_2$. Let $\mu$ be the pushforward of $\pi$ induced by $\uF$. 
Then for all $P\in\cX$ we have $(\mu,\uF(P))\in \bcK(S_1,S_2)\times K(S_1)$ and $f(\mu,\uF(P))=f[\pi,\uF](P)$. Thus (i) implies (ii).
Conversely, (i) is a special case of (ii) (where the probability space is $K(S_1)$ endowed with the $\sigma$-algebra of Borel sets and the measure $\mu$, and the coordinate functions are considered as random variables), so (ii) implies (i).
\end{proof}

We have introduced the evaluations of the second type mainly to indicate the scope of our framework, and they appear only in applications in Sections \ref{sss:cov} and \ref{sss:biloc};
in the rest of the paper, we deal with evaluations of the first type for the sake of simplicity (and this is sufficient by Proposition \ref{p:evals}). The following statement is a straightforward consequence of definitions, and represents the easy part of the algebra-geometry correspondence for moment polynomial positivity that is pursued in this paper. Namely, elements of the moment quadratic module are nonnegative on the corresponding moment semialgebraic set.

\begin{proposition}\label{p:ezi}
Let $S_1\subseteq\px$ and $S_2\subseteq\mp$. If $f\in\QM{S_1, S_2}$ then $f\ge0$ on $\bcK(S_1,S_2)\times K(S_1)$.
\end{proposition}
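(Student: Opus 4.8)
The plan is to show directly that each generator of $\QM{S_1,S_2}$ is nonnegative on $\bcK(S_1,S_2)\times K(S_1)$, and then invoke the fact that nonnegativity is preserved under addition and under taking convex combinations (i.e.\ under the operations that generate the quadratic module from its generators). Since, by \eqref{e:gens2}, $\QM{S_1,S_2}$ is the convex hull of elements of the form $f^2\,\y(g^2 s)$ with $s\in\{1\}\cup S_1$ and $f^2 t$ with $t\in S_1\cup S_2$ (for $f,g\in\MP$), and a convex hull is closed under nonnegative combinations that sum to something nonnegative, it suffices to treat these two families of generators.

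First I would fix an arbitrary pair $(\mu,\uX)\in\bcK(S_1,S_2)\times K(S_1)$ and evaluate. For a generator of the form $f^2 t$ with $t\in S_1$: evaluation is a homomorphism $\MP\to\R$, so $(f^2t)(\mu,\uX)=f(\mu,\uX)^2\,t(\mu,\uX)=f(\mu,\uX)^2\,t(\uX)$, since $t\in\px$ depends only on $\uX$. Because $\uX\in K(S_1)$ and $t\in S_1$, we have $t(\uX)\ge0$, hence the product is $\ge0$. For a generator $f^2 t$ with $t\in S_2$: again $(f^2t)(\mu,\uX)=f(\mu,\uX)^2\,t(\mu)\ge0$, because $t\in S_2$ and $\mu\in\bcK(S_1,S_2)$ forces $t(\mu)\ge0$, and $f(\mu,\uX)^2\ge0$.

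The remaining, and slightly more delicate, family is $f^2\,\y(g^2 s)$ with $s\in\{1\}\cup S_1$. Here I would argue that $\y(g^2 s)(\mu,\uX)\ge0$ by recognizing it as an actual integral. The key point is that for a pure moment polynomial the evaluation at $(\mu,\uX)$ factors through integration against $\mu$: concretely, $\y(h)(\mu,\uX)=\int h\,(\uX\mapsto x)\,\d\mu$ is the integral of the polynomial function $h(x)$ over $\R^n$ with respect to $\mu$ — this is immediate from the definition of the evaluation \eqref{e:hom1} on $\mp$, since $\y_{i_1,\dots,i_n}\mapsto\int x_1^{i_1}\cdots x_n^{i_n}\,\d\mu$ and this extends multiplicatively and additively to all of $\mp$, matching integration of the corresponding polynomial. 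Applying this with $h=g^2 s$, and writing $g(x)$ for the polynomial in $\px$ obtained from $g\in\MP$ by substituting the scalars $\y_{i_1,\dots,i_n}(\mu)$ and keeping $x_j$ as variables, we get $\y(g^2 s)(\mu,\uX)=\int g(x)^2\,s(x)\,\d\mu$. Now $g(x)^2\ge0$ pointwise, and $s(x)\ge0$ for all $x\in K(S_1)=\supp\mu$ (when $s\in S_1$; trivially when $s=1$), so the integrand is nonnegative on $\supp\mu$ and the integral is $\ge0$. (One should note in passing that all the integrals appearing are finite because $\mu\in\prob{K(S_1)}$ admits all mixed moments, so there is no integrability obstruction.) Multiplying by $f(\mu,\uX)^2\ge0$ keeps it nonnegative.

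Finally, an arbitrary $f\in\QM{S_1,S_2}$ is a convex combination (in particular a finite nonnegative-coefficient sum) of the generators just analyzed; since each evaluates to a nonnegative real at $(\mu,\uX)$, so does $f$. As $(\mu,\uX)\in\bcK(S_1,S_2)\times K(S_1)$ was arbitrary, $f\ge0$ there, proving the proposition. The only step requiring any care — and the one I would state explicitly rather than leave to the reader — is the identification of $\y(g^2s)(\mu,\uX)$ with the genuine integral $\int g(x)^2 s(x)\,\d\mu$, together with the observation that $\supp\mu\subseteq K(S_1)$ makes the integrand nonnegative; everything else is formal manipulation with the evaluation homomorphism.
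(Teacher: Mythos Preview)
Your proof is correct and is precisely the direct verification from definitions that the paper has in mind; indeed, the paper gives no proof at all, merely stating that the proposition ``is a straightforward consequence of definitions.'' Your explicit identification of $\y(g^2s)(\mu,\uX)$ with the integral $\int g(\mu,x)^2 s(x)\,\d\mu(x)$ (after substituting the $\mu$-values of the moment symbols into $g$) is exactly the one nontrivial observation underlying the claim, and you have handled it correctly.
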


Let us comment on the restriction to probability measures in this paper. While moment polynomials can be evaluated on more general measures, the algebraic certificates of this paper involve constant term perturbations, or quotients of moment polynomials. The fact that such certificates indeed guarantee positivity is rooted in the integral of the constant 1 being a fixed positive number; or, from an algebraic perspective, that $\y\colon\MP\to\mp$ is $\mp$-linear. In general, our results can be modified to moment evaluations on measures $\mu$ with $\int\d \mu=t$ for a fixed $t\in\R_{>0}$, by simply renormalizing the measures to probability measures. Thus we restrict ourselves to probability measures for the sake of simplicity.

We conclude this section with a renowned quadrature result, which is also relevant for evaluations of moment polynomials, and is utilized in several subsequent proofs.

\begin{proposition}[Tchakaloff's theorem {\cite[Theorem 2]{tchakaloff}}]\label{p:tchakaloff}
Let $S_1\subseteq\px$. For every $\mu\in \prob{K(S_1)}$ and $d\in\N$ there exists $\nu\in \prob{K(S_1)}$ with $|\supp\nu|\le \binom{n+d}{d}$ such that $\y_{i_1,\dots,i_n}(\nu)=\y_{i_1,\dots,i_n}(\mu)$ for all $i_1+\cdots+i_n\le d$.

Thus if $S_2\subset \mp$ is finite, $f\in\MP$ and $d=\max\{\deg f, \deg s\colon s\in S_2\}$, then $f\ge0$ on $\bcK(S_1,S_2)\times K(S_1)$ if and only if $f\ge0$ on $\left\{\nu\in\bcK(S_1,S_2)\colon |\supp\nu| \le \binom{n+d}{d} \right\}\times K(S_1)$.
\end{proposition}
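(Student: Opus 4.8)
The first assertion is precisely Tchakaloff's theorem as cited, so I would simply quote it rather than reprove it; I recall only that its standard proof is a Carath\'eodory argument, observing that the vector of moments of $\mu$ of order $\le d$ lies in the convex cone generated by the image of $K(S_1)$ under the map $\uX\mapsto(X_1^{i_1}\cdots X_n^{i_n})_{i_1+\cdots+i_n\le d}$, whose ambient space has dimension $\binom{n+d}{d}$. The genuinely new content here is therefore the ``Thus'' clause, which I would deduce from the first part as follows.

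The ``only if'' direction is immediate, since the family of $\nu$ appearing there is a subset of $\bcK(S_1,S_2)$. For the ``if'' direction, the plan is to replace an arbitrary admissible measure by a finitely supported one sharing its low-order moments. Fix $(\mu,\uX)\in\bcK(S_1,S_2)\times K(S_1)$; the goal is $f(\mu,\uX)\ge0$. I would first record a degree-counting observation: by the convention $\deg\y_{i_1,\dots,i_n}=i_1+\cdots+i_n$, every formal moment variable occurring in $f$ has weight at most $\deg f\le d$, and every one occurring in some $s\in S_2$ has weight at most $\deg s\le d$. Hence, for fixed $\uX$, both the number $f(\mu,\uX)$ and the conditions ``$s(\mu)\ge0$ for all $s\in S_2$'' depend on $\mu$ only through the finitely many mixed moments $\y_{i_1,\dots,i_n}(\mu)$ with $i_1+\cdots+i_n\le d$.

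Then I would apply the first part of the proposition to $\mu$ and this $d$, obtaining $\nu\in\prob{K(S_1)}$ with $|\supp\nu|\le\binom{n+d}{d}$ and $\y_{i_1,\dots,i_n}(\nu)=\y_{i_1,\dots,i_n}(\mu)$ for all $i_1+\cdots+i_n\le d$. By the previous step, $s(\nu)=s(\mu)\ge0$ for all $s\in S_2$, so $\nu\in\bcK(S_1,S_2)$ with support of the required size, and hence $(\nu,\uX)$ lies in the set on which $f$ is assumed nonnegative; moreover $f(\nu,\uX)=f(\mu,\uX)$ because the two evaluations use the same $\uX$ and the same moments up to order $d$. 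Therefore $f(\mu,\uX)=f(\nu,\uX)\ge0$, completing the argument.

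I do not expect any real obstacle: the argument is a short reduction, and the only point needing care is to check that $d=\max\{\deg f,\deg s:s\in S_2\}$ really does dominate all the moments on which $f$ and the constraints depend, which is exactly what the weighted-degree convention ensures. The substantive ingredient, the existence of an atomic representing measure with atoms inside $K(S_1)$ (not merely in $\R^n$), is fully delegated to the cited theorem; if one wanted it self-contained, the only subtlety would be keeping the atoms in $K(S_1)$, achieved by applying Carath\'eodory to the cone generated by $\{(X_1^{i_1}\cdots X_n^{i_n})_{i_1+\cdots+i_n\le d}:\uX\in K(S_1)\}$ rather than to a linear subspace.
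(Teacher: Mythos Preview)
Your proposal is correct and matches the paper's approach: the paper states the proposition with the citation and gives no separate proof, treating both the first assertion and the ``Thus'' clause as immediate from the cited Tchakaloff theorem together with the degree convention. Your write-up simply makes explicit the easy deduction of the second paragraph, and the degree-counting observation you record is exactly the point that makes it work.
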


\begin{remark}\label{r:popbad}
Proposition \ref{p:tchakaloff} in principle allows to reformulate a moment polynomial optimization problem as a classical polynomial optimization problem in the following way. Let $S_1\subset\px$ and $S_2\subset\mp$ be finite, and $f\in\MP$. Denote $d=\max\{\deg f,\deg s\colon s\in S_2\}$ and $D=\binom{n+d}{d}$. By Proposition \ref{p:tchakaloff}, the infimum of $f$ on $\bcK(S_1,S_2)\times K(S_1)$ is equal to

\begin{equation}\label{e:mom2clas}
\begin{split}
&\inf_{\uX,\uY_1,\dots,\uY_D,\alpha_1,\dots,\alpha_D} \
f\left(\sum_{i=1}^D\alpha_i\delta_{\uY_i},\uX \right)\\
&\text{subject to }\ s\left(\sum_{i=1}^D\alpha_i\delta_{\uY_i},\uX \right)\ge0 \quad \text{for } s\in S_2,\\
&\phantom{subject to }\quad \uX,\uY_1,\dots,\uY_D\in K(S_1), \\
&\phantom{subject to }\quad \alpha_1,\dots,\alpha_D\ge0,\ \sum_{i=1}^D\alpha_i=1.
\end{split}
\end{equation}
Here, $\delta_{\uY_i}$ denotes the Dirac delta measure concentrated at $\uY_i\in\R^n$. While \eqref{e:mom2clas} minimizes a polynomial function subject to polynomial constraints, and can be thus approached with standard methods of polynomial optimization, it has $n+Dn+D=(\binom{n+d}{d}+1)(n+1)-1$ variables. 
This number quickly rises beyond the capabilities of solvers for global nonlinear optimization (see Subsection \ref{sss:cov} for a concrete example). 
Furthermore, the problem \eqref{e:mom2clas} does not fully utilize the structure of moment polynomials; e.g. $\uY_i$ can be permuted, $\alpha_i$ always appears jointly with $\uY_i$, and so on.
A moment polynomial optimization procedure bypassing these issues is developed in Section \ref{s:mompop} below.
\end{remark}

\subsection{Comparison with existing work}

Now that moment polynomials and their evaluations have been rigorously introduced, let us compare them to related constructions in the literature.

\subsubsection{State and trace polynomials}
Noncommutative analogs of the moment polynomial positivity framework have been introduced in \cite{KMV} and \cite{KMVW}. While moment polynomials are polynomial expressions in commuting variables $x_1,\dots,x_n$ and formal symbols of their products $\y(x_1^{i_1}\cdots x_n^{i_n})$, \emph{state polynomials} of \cite{KMVW} are polynomial expressions in \emph{noncommuting} variables $x_1,\dots,x_n$ and formal symbols of their products $\varsigma(x_{j_1}\cdots x_{j_\ell})$, called \emph{formal states}. While moment polynomials are evaluated on pairs of a point $\uX\in\R^n$ and a measure $\mu$ on $\R^n$, state polynomials are evaluated on pairs of a tuple of self-adjoint operators $\uX\in\cB(\cH)^n$ and a state $\lambda$ on $\cB(\cH)$ (a state is a unital positive linear functional $\cB(\cH)\to\C$).
If one is only interested in evaluations of state polynomials on tracial states (a state $\lambda:\cB(\cH)\to\C$ is tracial if $\lambda(AB)=\lambda(BA)$ for all $A,B\in\cB(\cH)$), then one can impose additional tracial relations on the symbol $\varsigma$, namely $\varsigma(w_1w_2)=\varsigma(w_2w_1)$ for all products $w_1,w_2$ of $x_1,\dots,x_n$, and thus obtains \emph{trace polynomials} \cite{KMV}, which originate in invariant theory \cite{P76}. 
There is hierarchy among moment, trace and state polynomial frameworks. First, positivity of trace polynomials subject to constraints $S$ can be viewed as positivity of state polynomials subject to $S$ together with tracial constraints, namely $S\cup \{\pm(\varsigma(w_1w_2)-\varsigma(w_2w_1))\colon w_1,w_2\}$. Second, positivity of moment polynomials subject to constraints $S$ can be viewed as positivity of trace polynomials subject to $S$ together with commutativity constraints, namely $S\cup \{\pm(x_ix_j-x_jx_i)\colon i,j\}$; this equivalence is exact because states on jointly commuting (bounded) operators correspond to (compactly supported) measures by the spectral theorem \cite[Theorem 5.23]{schmUnbded}.
All three frameworks are applied in quantum information theory. Classical shared randomness, maximal quantum entanglement, and general quantum entanglement can be investigated with moment, trace and state polynomials, respectively.

With the above hierarchy in mind, it is 
reasonable to ask why these three frameworks are investigated separately. The answer is that the scope and the strength of positivity certificates vary considerably among the frameworks. Let us demonstrate this with the main results presented in the introduction. When positivity without constraints is considered, the state analog of Hilbert's 17th problem admits a positive resolution, while trace and moment analogs of Hilbert's 17th problem do not. Nevertheless, Theorem \ref{ta:c} exactly identifies to what extent such a resolution fails for moment polynomials; namely, a moment polynomial admits a sum-of-squares certificate with denominators if and only if it is nonnegative under all pseudo-moment evaluations. Next, when positivity on bounded domains is considered, Theorem \ref{ta:a} provides a Positivstellensatz for arbitrary moment polynomials; on the other hand, within the other two frameworks, such a statement only holds for \emph{pure} variants of state and trace polynomials (polynomial expressions in states or traces, without freely appearing noncommuting variables $x_1,\dots,x_n$). Finally, Theorem \ref{ta:d} establishes a perturbative Positivstellensatz for moment polynomial positivity subject to arbitrary (not necessarily bounded) moment semialgebraic sets; on the contrary, no such general result exists for state or trace polynomials. The underlying principle of these differences is that the commutative theory (real algebraic geometry, measure theory) provides more sophisticated techniques than the noncommutative theory (free real algebraic geometry, operator algebras).

\subsubsection{Related advances on polynomial inequalities in moments}
Let us highlight three other works that can be compared to the moment polynomial framework of this paper.
In \cite{blekherman2022}, positivity of pure moment \emph{binomials} (i.e., polynomials with only two additive terms) is analyzed using tropicalization, resulting in a combinatorial description of nonnegative pure moment binomials subject to binomial inequalities.
The certificates of \cite{blekherman2022} are more definitive than the ones of this paper (easier to utilize, and of combinatorial nature as opposed to Theorem \ref{t:lass} for instance), 
but they only apply to binomials.
Next, \cite{fantuzzi} investigates polynomial inequalities in integrals of function variables and their derivatives. 
This setup can be compared with our framework on moment polynomial inequalities in random variables in light of Proposition \ref{p:evals}.
The setup in \cite{fantuzzi} is more general because it considers moments of not only function variables, but also of their derivatives. However, \cite{fantuzzi} derives only sufficiency conditions for polynomial integral inequalities, which are comparable to the reverse implication in Theorem \ref{ta:a}, but does not establish their necessity.
Lastly, let us refer to \cite{infusino2014,infusino2023moment} on infinite-dimensional moment problems. 
Often, moment problems are duals of sum-of-squares positivity certificates, and our paper provides the latter for moment polynomials, which are symmetric algebras on an infinite-dimensional space (spanned by the formal moments).
However, this point of view is not compatible with the scope of \cite{infusino2014,infusino2023moment}.
Namely, these papers investigate moment problems for algebras generated by nuclear spaces \cite{schaefer} (e.g., infinitely differentiable functions on $\R^n$ with compact support form a nuclear space), and the topological assumption of nuclearity plays a crucial role in their moment problem solvability criteria. On the other hand, our positivity certificates are based on algebraic manipulations of formal moment symbols without substantial topological considerations, and leverage the real-algebraic nature of $\px$, which leads to a computationally viable framework. Therefore our positivity certificates do not aspire to a moment problem, nor do moment problems associated to nuclear spaces carry implications for moment polynomial positivity.

\section{Pseudo-moments and Hilbert's 17th problem for moment polynomials}\label{sec:h17}

In this section we consider pseudo-moment evaluations of moment polynomials. We give a solution to a natural version of Hilbert's 17th problem for pseudo-moment evaluations (Theorem \ref{t:h17pseudo}). In particular, since positivity on pseudo-moments is stricter than positivity on moments, our solution implies that moment polynomials with nonnegative moment evaluations are not necessarily rational consequences of terms of the form $g^2$ and $\y(g^2)$ for $g\in\MP$.

Let $[\ux]_d$ denote all monomials in $\px$ of degree at most $d$, ordered degree-lexicographically according to $x_1>\cdots>x_n$. 
For $d\in\N$ let $H_d = (uv)_{u,v\in [\ux]_d}$ be the \emph{symbolic Hankel matrix} over $\px$ of order $d$. For any map $\alpha$ on $\px$, let $\alpha(H_d)$ denote the matrix obtained by applying $\alpha$ entry-wise to $H_d$. The relation between the matrix $H_d$ and moments of squares in $\MP$ is captured by the following statement.

\begin{lemma}\label{l:pseudo}
Let $\phi:\MP\to\R$ be a homomorphism of $\R$-algebras. The following statements are equivalent:
\begin{enumerate}[\rm (i)]
    \item $\phi(\y(f^2))\ge0$ for all $f\in\MP$;
    \item $\phi(\y(p^2))\ge0$ for all $p\in\px$;
    \item $(\phi\circ\y)(H_d)$ is positive semidefinite for all $d\in\N$.
\end{enumerate}
Such homomorphism is called a \emph{pseudo-moment evaluation}.
\end{lemma}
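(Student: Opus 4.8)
The plan is to prove the cyclic chain of implications (i)$\Rightarrow$(ii)$\Rightarrow$(iii)$\Rightarrow$(i). The first implication is immediate since $\px\subseteq\MP$, so every $p\in\px$ is in particular an element $f\in\MP$ with $\y(f^2)=\y(p^2)$.

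For (ii)$\Rightarrow$(iii), fix $d\in\N$ and write $H_d=(uv)_{u,v\in[\ux]_d}$. For an arbitrary vector $c=(c_u)_{u\in[\ux]_d}\in\R^{|[\ux]_d|}$, set $p=\sum_{u\in[\ux]_d}c_u\,u\in\px$. Then $p^2=\sum_{u,v}c_uc_v\,uv$, and applying the $\mp$-linear map $\y$ followed by the homomorphism $\phi$ gives $\phi(\y(p^2))=\sum_{u,v}c_uc_v\,(\phi\circ\y)(uv)=c^{\mathsf T}(\phi\circ\y)(H_d)\,c$. By (ii) this is nonnegative for every $c$, so $(\phi\circ\y)(H_d)\succeq0$. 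Conversely, for (iii)$\Rightarrow$(i), take any $f\in\MP$. Since $\y$ is $\mp$-linear and $f$ is a finite $\mp$-linear combination of monomials $x_1^{i_1}\cdots x_n^{i_n}$, we can write $f=\sum_{u\in[\ux]_d}q_u\,u$ for suitable $d$ and $q_u\in\mp$ (collecting the pure-moment coefficients of the monomials appearing in $f$). Then $\y(f^2)=\sum_{u,v}q_uq_v\,\y(uv)$, and applying $\phi$ yields $\phi(\y(f^2))=\sum_{u,v}\phi(q_u)\phi(q_v)\,(\phi\circ\y)(uv)=b^{\mathsf T}(\phi\circ\y)(H_d)\,b$ where $b=(\phi(q_u))_{u\in[\ux]_d}\in\R^{|[\ux]_d|}$; this is $\ge0$ by (iii).

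The only mild subtlety, and the step I would be most careful about, is the bookkeeping in (iii)$\Rightarrow$(i): one must note that although $f^2$ may contain monomials of degree up to $2d$, the factorization $\y(f^2)=\sum_{u,v\in[\ux]_d}q_uq_v\,\y(uv)$ only ever feeds the entries of $H_d$ (not $H_{2d}$) into $\phi\circ\y$, because each $\y(uv)$ with $u,v\in[\ux]_d$ is literally an entry of $H_d$. Here it is essential that $\phi$ is an algebra homomorphism, so that $\phi(q_uq_v)=\phi(q_u)\phi(q_v)$ and the expression genuinely becomes the quadratic form $b^{\mathsf T}(\phi\circ\y)(H_d)b$ in the real vector $b$. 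No convergence or topological considerations are needed since each $f\in\MP$ has finite degree, so only finitely many $H_d$ and finitely many entries are ever involved.
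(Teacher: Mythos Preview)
Your proof is correct and follows essentially the same route as the paper: the cyclic chain (i)$\Rightarrow$(ii)$\Rightarrow$(iii)$\Rightarrow$(i), with (ii)$\Rightarrow$(iii) via the quadratic form $c^{\mathsf T}(\phi\circ\y)(H_d)c=\phi(\y(p^2))$ for $p=\sum_u c_u u\in\px$, and (iii)$\Rightarrow$(i) via $f=\sum_u q_u u$ with $q_u\in\mp$ and the observation $\phi(\y(f^2))=b^{\mathsf T}(\phi\circ\y)(H_d)b$ for $b=(\phi(q_u))_u$. Your explicit remark that $\phi$ being multiplicative is what turns the expression into a genuine quadratic form in a real vector is exactly the point.
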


\begin{proof}
The implication (i)$\Rightarrow$(ii) is clear.
To see (ii)$\Rightarrow$(iii), suppose that (ii) holds, and let $w=(w_u)_{u\in [\ux]_d}$ be a real vector. Then
\begin{align*}
w^*\left((\phi\circ\y)(H_d)\right)w
=&\sum_{u,v\in[\ux]_d} w_uw_v\phi(\y(uv))
=\phi\left(\y\bigg(\sum_{u,v\in[\ux]_d} w_uw_v uv \bigg)\right)\\
=&\phi\left(\y\Bigg(\bigg(\sum_{u\in[\ux]_d} w_u u\bigg)^2 \Bigg)\right)\ge0
\end{align*}
by (ii). Since $w$ was arbitrary, it follows that $(\phi\circ\y)(H_d)$ is a positive semidefinite matrix, i.e., (iii) holds.
To see (iii)$\Rightarrow$(i), suppose that (iii) holds.
Let $f\in\MP$; for some $d\in\N$ we we can write it as $f=\sum_{u\in [\ux]_d} q_u u$ for $q_u\in\mp$. Then
$$
\phi\left(\y(f^2)\right)
=\sum_{u,v\in [\ux]_d} \phi(q_u)\phi(q_v)\phi\left(\y(uv)\right) 
=w^*\left((\phi\circ\y)(H_d)\right)w,
$$
where $w=(\phi(q_u))_{u\in [\ux]_d}$ is a real vector. Since $(\phi\circ\y)(H_d)$ is positive semidefinite, we have $\phi\left(\y(f^2)\right)\ge0$, so (i) holds.
\end{proof}

\begin{remark}\label{r:pseudoL}
A pseudo-moment evaluation $\phi:\MP\to\R$ is uniquely determined by $\phi(x_j)\in\R$ for $j=1,\dots,n$, and a unital linear functional $L:\px\to\R$ given by $L(p)=\phi(\y(p))$ and satisfying $L(p^2)\ge0$ for all $p\in\px$.
\end{remark}

\begin{remark}
There is a certain subtlety in Lemma \ref{l:pseudo}. Namely, the implication (ii)$\Rightarrow$(i) fails in general for homomorphisms $\MP\to R$ where $R$ is a closed real field containing $\R$, even when $n=1$. Indeed, by \cite[Example 2.6]{KPV} there exist a real closed field $R$ and a homomorphism $\phi:\mp\to R$ such that $\phi(\y(p^2))\ge0$ for all $p\in\px$, and $\phi(\y_2-\y_1^2)<0$, even though $\y_2-\y_1^2=\det\y(H_2)=\y((x_1-\y_1)^2)\in\qm{\emptyset,\emptyset}$.
This example stems from the fact that the quadratic module $\qm{\emptyset,\emptyset}$ in $\mp$, which is generated by $\y(f^2)$ for $f\in\MP$, cannot be generated by a finite set. Consequently, one cannot apply Tarski's tranfer principle (see e.g. \cite[Theorem 11.2.1]{marshallbook}) 
to translate the validity of (ii)$\Rightarrow$(i) for homomorphisms into $\R$ to homomorphisms into general real closed fields.
\end{remark}

Pseudo-moment evaluations form a strictly larger class than moment evaluations. For example, the pure moment polynomial $\y_{4,2}\y_{2,4}-\y_{2,2}^3$ is nonnegative under all moment evaluations, but not under all pseudo-moment evaluations.
This is shown in \cite[Example 5.1]{blekherman2022} (see Example \ref{ex:h17} below for an alternative argument) where, more generally, \emph{binomial} moment and pseudo-moment inequalities are characterized using tropicalization techniques.
Theorem \ref{t:h17pseudo} below gives a sums of squares certificate with denominators for moment polynomials that are nonnegative under all pseudo-moment evaluations.
For a sums of squares certificate with perturbations for moment polynomials that are nonnegative under all moment evaluations, see Theorem \ref{t:lass}.

In preparation for Theorem \ref{t:h17pseudo}, the next two lemmas address extensions and strictly positive approximations of positive functionals on $\R[\ux]_d$, which are relevant to pseudo-moment evaluations in light of Remark \ref{r:pseudoL}.

\begin{lemma}\label{l:hank_ext}
If a linear functional $L:\px_{2d}\to\R$ satisfies $L(p^2)>0$ for $p\in\px_d\setminus\{0\}$, then it extends to a linear functional $\widetilde L:\px\to\R$ satisfying $\widetilde L(p^2)>0$ for $p\in\px\setminus\{0\}$.
\end{lemma}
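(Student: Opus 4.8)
The plan is to extend $L$ one degree at a time, and at each step choose the values of $L$ on the new monomials generically enough that the positive-definiteness condition is preserved. Concretely, suppose inductively that $L$ has been extended to a linear functional $L_m:\px_{2m}\to\R$ (with $m\ge d$) satisfying $L_m(p^2)>0$ for all $p\in\px_m\setminus\{0\}$; I want to extend it to $L_{m+1}:\px_{2m+2}\to\R$ with the analogous property on $\px_{m+1}$. The values of $L_{m+1}$ on $\px_{2m}$ are already fixed, and the new freedom is the assignment of $L_{m+1}$ on the monomials of degree between $2m+1$ and $2m+2$; call this finite tuple of new values $t\in\R^N$. Positive-definiteness of $L_{m+1}$ on $\px_{m+1}$ is equivalent to positive-definiteness of the symbolic Hankel-type matrix $L_{m+1}(H_{m+1})$, which is a symmetric matrix whose entries are affine-linear in $t$, and whose principal submatrix indexed by monomials of degree $\le m$ equals $L_m(H_m)$ — which is positive definite by the inductive hypothesis.

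Thus the problem reduces to the following elementary fact: given a symmetric matrix $A(t)$ depending affinely on a parameter $t\in\R^N$, such that some principal submatrix $A_0$ (not depending on $t$) is positive definite, there exists a choice of $t$ making $A(t)$ positive definite. This is seen via a Schur complement: writing $A(t)=\begin{pmatrix} A_0 & B(t)\\ B(t)^\top & C(t)\end{pmatrix}$ after reordering, $A(t)$ is positive definite iff $C(t)-B(t)^\top A_0^{-1}B(t)\succ0$. Since $C$ and $B$ depend affinely on (disjoint blocks of) $t$, one can first fix the entries of $t$ appearing in $B$ arbitrarily, and then choose the entries appearing only in $C$ — in particular the values of $L_{m+1}$ on the ``top'' monomials such as $x_j^{2m+2}$, which contribute to the diagonal of $C$ — large enough that $C(t)-B(t)^\top A_0^{-1}B(t)$ is diagonally dominant, hence positive definite. (One should check that the monomials of degree $2m+2$ that are perfect squares of monomials of degree $m+1$ indeed appear on the diagonal of the block $C$, so that their $L$-values are genuinely free diagonal parameters; this is exactly the structure of a Hankel matrix.)

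Having produced the sequence $L_d=L, L_{d+1}, L_{d+2},\dots$ with $L_{m+1}$ restricting to $L_m$, define $\widetilde L:\px\to\R$ by $\widetilde L(p)=L_m(p)$ for any $m$ with $p\in\px_{2m}$; this is well defined by compatibility, $\R$-linear, and for any nonzero $p\in\px$ we have $p\in\px_m$ for some $m\ge d$, so $\widetilde L(p^2)=L_m(p^2)>0$. This gives the desired extension.

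The main obstacle is the inductive step, i.e. verifying that the new monomial values really do give enough freedom to keep the Hankel matrix positive definite; the Schur-complement/diagonal-dominance argument handles it, but one must be slightly careful that the ``free'' parameters one pushes to $+\infty$ (the top-degree square monomials) sit on the diagonal of the Schur complement block and not inside the coupling block $B$. Everything else is routine bookkeeping.
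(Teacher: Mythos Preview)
Your overall strategy---extend degree by degree and use a Schur complement at each step---is exactly the paper's approach. The gap is in how you populate the new block $C$. You propose fixing the off-diagonal parameters of $C$ arbitrarily and then pushing the diagonal parameters large so that the Schur complement becomes diagonally dominant. But the Hankel structure does not let you treat diagonal and off-diagonal entries of $C$ independently: the $(u,v)$-entry of $C$ is $L_{m+1}(uv)$, and a monomial $w$ of degree $2m+2$ can be both a perfect square $u^2$ (diagonal) and a product $v_1v_2$ of two distinct degree-$(m+1)$ monomials (off-diagonal). Concretely, with $n=2$, $m=1$, the $C$-block is indexed by $x_1^2,x_1x_2,x_2^2$, and $L(x_1^2x_2^2)$ sits both at the $(x_1x_2,x_1x_2)$ diagonal position and at the $(x_1^2,x_2^2)$ off-diagonal position. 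Setting the ``pure off-diagonals'' $L(x_1^3x_2),L(x_1x_2^3)$ to $0$ and all three diagonal values to a common $\alpha$ forces
\[
C=\alpha\begin{pmatrix}1&0&1\\0&1&0\\1&0&1\end{pmatrix},
\]
which is singular for every $\alpha$. So diagonal dominance cannot be arranged by scaling only the square monomials, and your parenthetical ``one should check\ldots'' is precisely where the argument breaks.

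The fix is simple and is what the paper does: instead of aiming for diagonal dominance, choose all the new degree-$(2m+2)$ values simultaneously as $\alpha$ times the moments of a fixed measure with positive density (e.g.\ Lebesgue on $[0,1]^n$), so that $C=\alpha K$ with $K$ positive definite and independent of $\alpha$; then the Schur complement $\alpha K-BA_0^{-1}B^\top$ is positive definite for large $\alpha$. With this correction, your proof coincides with the paper's.
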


\begin{proof}
For $\alpha>0$ consider the linear functional
$L_\alpha: \px_{2d+2}\to\R$ defined on monomials $u\in [\ux]_{2d+2}$ as follows:
$$
L_\alpha(u)=\left\{
\begin{array}{ll}
L(u) & \text{ if }u\in\ [\ux]_{2d},\\
0 & \text{ if }u\in\ [\ux]_{2d+1}\setminus [\ux]_{2d},\\
\alpha\int_{[0,1]^n} u \d x_1\cdots\d x_n & 
\text{ if }u\in\ [\ux]_{2d+2}\setminus [\ux]_{2d+1}.\\
\end{array}
\right.
$$
Applying $L_\alpha$ entry-wise to $H_{d+1}$ results in
$$L_{\alpha}(H_{d+1})=
\begin{pmatrix}
L(H_d) & B^* \\ B & \alpha K
\end{pmatrix},
$$
where $L(H_d),K,B$ are independent of $\alpha$, and $L(H_d)$ and $K$ are positive definite matrices. Since $L(H_d)$ is invertible, $L(H_{d+1})$ is positive definite if and only if the Schur complement $\alpha K-BL(H_d)^{-1}B^*$ is positive definite. This is indeed the case for a sufficiently large $\alpha>0$. For such an $\alpha$, the functional $L_\alpha$  on $\px_{2d+2}$ is positive on $\y(p^2)$ for $p\in\px_{d+1}\setminus\{0\}$, and agrees with $L$ on $\px_{2d}$. Continuing in this fashion by induction on $d$, we obtain $\widetilde L:\px\to\R$ that extends $L$ and satisfies $\widetilde L(p^2)>0$ for $p\in\px\setminus\{0\}$.
\end{proof}

\begin{lemma}\label{l:hank_perturb}
For every $\ve>0$, $d\in\N$, and a unital linear functional $L:\px_{2d}\to\R$ satisfying $L(p^2)\ge0$ for $p\in\px_d$, there exists 
a unital linear functional $\widetilde L:\px_{2d}\to\R$ satisfying $\widetilde L(p^2)>0$ for $p\in\px_d\setminus\{0\}$, and $|\widetilde L(u)-L(u)|<\ve$ for $u\in\px_{2d}$.
\end{lemma}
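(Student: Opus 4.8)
The plan is to perturb $L$ by adding a small multiple of a fixed strictly positive functional. Concretely, fix the functional $L_0\colon\px_{2d}\to\R$ defined on monomials $u\in[\ux]_{2d}$ by $L_0(u)=\int_{[0,1]^n}u\,\d x_1\cdots\d x_n$; this is the Lebesgue-moment functional, it is unital, and $L_0(p^2)=\int_{[0,1]^n}p^2>0$ for every nonzero $p\in\px_d$ (a nonzero polynomial vanishes only on a set of measure zero). Equivalently, $L_0(H_d)$ is positive definite.

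For $\delta\in(0,1)$ set $\widetilde L=(1-\delta)L+\delta L_0$. This is again a unital linear functional, since both $L$ and $L_0$ are unital and $(1-\delta)+\delta=1$. Applying it entrywise to the symbolic Hankel matrix gives $\widetilde L(H_d)=(1-\delta)L(H_d)+\delta L_0(H_d)$, which is a sum of a positive semidefinite matrix and a positive definite one (scaled by the positive numbers $1-\delta$ and $\delta$), hence positive definite. By the computation in the proof of Lemma \ref{l:pseudo} (or Lemma \ref{l:hank_ext}), positive definiteness of $\widetilde L(H_d)$ is exactly the statement that $\widetilde L(p^2)>0$ for all $p\in\px_d\setminus\{0\}$.

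Finally, for each monomial $u\in[\ux]_{2d}$ one has $\widetilde L(u)-L(u)=\delta\,(L_0(u)-L(u))$, so $|\widetilde L(u)-L(u)|\le\delta\,\max_{u\in[\ux]_{2d}}|L_0(u)-L(u)|$, and since $[\ux]_{2d}$ is finite this maximum is a finite constant $C$ depending only on $L$ and $d$; extending by linearity, $|\widetilde L(u)-L(u)|\le\delta\,C'\,\|u\|$ for a suitable norm on the finite-dimensional space $\px_{2d}$. Choosing $\delta>0$ small enough (so that $\delta C<\ve$ on the monomial basis, or $\delta C'<\ve$ relative to whatever normalization of $\px_{2d}$ is intended) yields the desired bound. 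There is no real obstacle here; the only point requiring a line of justification is the strict positivity $L_0(p^2)>0$ for nonzero $p$, which follows because $p^2$ is a nonnegative continuous function on $[0,1]^n$ that is not identically zero.
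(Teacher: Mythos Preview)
Your proposal is correct and is essentially identical to the paper's proof: the paper also takes $L_0(p)=\int_{[0,1]^n}p\,\d x_1\cdots\d x_n$ and sets $\widetilde L=(1-\delta)L+\delta L_0$ for sufficiently small $\delta>0$. Your write-up simply spells out in more detail why $L_0(p^2)>0$ and why the $\varepsilon$-closeness follows.
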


\begin{proof}
Note that the unital linear functional $L_0:\px\to\R$ given by $$L_0(p)=\int_{[0,1]^n} p \d x_1\cdots\d x_n$$ satisfies $L_0(p^2)>0$ for $p\in\px\setminus\{0\}$. Then $\widetilde L = (1-\delta) L+\delta L_0$ for a sufficiently small $\delta>0$ has the desired properties.
\end{proof}

The proof of Theorem \ref{t:h17pseudo} requires some additional terminology from real algebra \cite{marshallbook}.
A \emph{preordering} $P$ in a commutative unital ring $A$ is a quadratic module closed under multiplication.
Let $\Omega$ denote the preordering in $\mp$ generated by $\qm{\emptyset,\emptyset}$, and let $\widehat\Omega$ denote the preordering in $\MP$ generated by $\QM{\emptyset,\emptyset}$. More concretely,
\begin{align*}
\Omega&=\left\{
\sum_i \y(f_{i1}^2)\cdots \y(f_{ik_i}^2)\colon f_{ij}\in \MP
\right\},\\
\widehat\Omega&=\left\{
\sum_i f_{i0}^2\y(f_{i1}^2)\cdots \y(f_{ik_i}^2)\colon f_{ij}\in \MP
\right\}.\\
\end{align*}
Preorderings play a crucial role in the Krivine-Stengle Positivstellensatz \cite[Theorem 2.2.1]{marshallbook}, which we recall now. If $f,g_1,\dots,g_\ell\in\R[y_1,\dots,y_k]$ and $f$ is nonnegative on $\{g_1\ge0,\dots,g_\ell\ge0\}$, then the Krivine-Stengle Positivstellensatz implies that $f$ is a quotient of elements from the preordering in $\R[y_1,\dots,y_k]$ generated by $g_1,\dots,g_\ell$.

The following lemma relates the symbolic Hankel matrix $H_d$ to the preordering $\Omega$.

\begin{lemma}\label{l:minors}
Every principal minor of $\y(H_d)$ is a quotient of elements in $\Omega$.
\end{lemma}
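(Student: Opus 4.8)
The plan is to prove this by a symbolic Cholesky (Gram--Schmidt) computation inside the integral domain $\mp$, combined with an induction on the size of the minor. Fix $d$ and enumerate the index set of the given principal minor as $\{u_1,\dots,u_k\}\subseteq[\ux]_d$ in an arbitrary order. For $0\le i\le k$ let $D_i\in\mp$ be the leading $i\times i$ principal minor of $(\y(u_au_b))_{1\le a,b\le k}$, so that $D_0=1$ and $D_k$ is the minor we must express as a quotient of elements of $\Omega$. The central object is, for each $i$, the ``symbolic Cholesky vector''
\[
e_i:=\det\begin{pmatrix}
\y(u_1u_1)&\cdots&\y(u_1u_{i-1})&u_1\\
\vdots & &\vdots&\vdots\\
\y(u_iu_1)&\cdots&\y(u_iu_{i-1})&u_i
\end{pmatrix},
\]
the determinant of the $i\times i$ symbolic moment matrix with its last column replaced by the column of monomials $u_1,\dots,u_i$. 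Expanding along that last column shows $e_i\in\MP$, with the coefficient of $u_i$ equal to $D_{i-1}$.

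The first step is the identity $\y(e_i^2)=D_{i-1}D_i$. Writing $e_i=\sum_{l=1}^i\gamma_l u_l$ with $\gamma_l\in\mp$ and $\gamma_i=D_{i-1}$, and using that $\y$ is $\mp$-linear, one gets $\y(e_iu_m)=\sum_l\gamma_l\,\y(u_lu_m)$, which is exactly the cofactor expansion along the last column of the displayed matrix with that column replaced by $(\y(u_1u_m),\dots,\y(u_iu_m))^{\top}$: for $m<i$ this determinant has two equal columns and vanishes, and for $m=i$ it is the full $i\times i$ moment matrix and hence equals $D_i$. Pairing $e_i$ with itself then yields $\y(e_i^2)=\sum_l\gamma_l\,\y(e_iu_l)=\gamma_iD_i=D_{i-1}D_i$. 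Since $e_i\in\MP$, the left-hand side is of the form $\y(f^2)$, hence lies in $\qm{\emptyset,\emptyset}\subseteq\Omega$; therefore $D_{i-1}D_i\in\Omega$ for every $i$.

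The second step is to check that each $D_i$ is nonzero in $\mp$, so that dividing by it is legitimate: evaluating the $\y$-variables at the moments of Lebesgue measure on $[0,1]^n$ sends $D_i$ to the Gram determinant of $u_1,\dots,u_i$ in $L^2([0,1]^n)$, which is strictly positive because distinct monomials are linearly independent. Then I would induct on $i$: $D_0=1\in\Omega$, and if $D_{i-1}=p/q$ with $p,q\in\Omega\setminus\{0\}$, then from $D_{i-1}D_i\in\Omega$ we obtain $D_i=\bigl((D_{i-1}D_i)\,q\bigr)/p$, again a quotient of elements of $\Omega$ because the preordering $\Omega$ is closed under multiplication. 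Taking $i=k$ completes the proof.

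The conceptual obstacle is choosing the right mechanism rather than the calculations themselves: a naive Cauchy--Binet argument would require a ``symbolic square root'' $\y(H_d)=CC^{\top}$ over $\mp$, which need not exist, while a direct appeal to the Krivine--Stengle Positivstellensatz is circular since the natural defining inequalities of the relevant semialgebraic set $\{\phi(\y(H_d))\succeq0\}$ are the very minors we are trying to handle. The Cholesky vectors $e_i$ bypass both obstructions, and after that the only care needed is the routine bookkeeping in the identity $\y(e_i^2)=D_{i-1}D_i$ — keeping in mind that $\y$ is $\mp$-linear but not multiplicative — together with the nonvanishing of the $D_i$.
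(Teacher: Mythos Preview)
Your argument is correct and takes a genuinely different route from the paper's. The paper invokes an external characterization of positive semidefinite matrices \cite[Proposition 4.3]{klep2018positive}: a symmetric $X$ is PSD iff $\tr(XT_j(X)^2)\ge0$ for finitely many explicit polynomials $T_j$. It then applies Krivine--Stengle over the finitely generated polynomial ring in the entries of $X$ with respect to the preordering generated by these trace expressions, and finally checks that after substituting $X=\y(H_d)$ each $\tr(\y(H_d)T_j(\y(H_d))^2)$ lands in $\qm{\emptyset,\emptyset}$. Your proof is more direct and self-contained: the symbolic Cholesky vectors $e_i$ yield the single identity $\y(e_i^2)=D_{i-1}D_i\in\Omega$, from which the telescoping induction extracts each $D_i$ as a quotient in $\Omega$. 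Your route avoids both Krivine--Stengle and the cited PSD characterization, at the cost of needing the nonvanishing of the intermediate $D_i$ (which you dispatch with the Lebesgue-on-$[0,1]^n$ evaluation). The paper's route, by contrast, fits the lemma into a general pattern (already used for state polynomials in \cite{KMVW}) and does not depend on the nonvanishing step. Your remark that a \emph{naive} application of Krivine--Stengle using the minors themselves as generators would be circular is well taken; the paper sidesteps this precisely by working with the trace-type generators instead.
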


\begin{proof}
This is an adaptation of \cite[Proposition 4.2]{KMVW}; let us sketch the main idea without technical details. By \cite[Proposition 4.3]{klep2018positive}, a symmetric matrix $X$ of a fixed size $D$ is positive semidefinite if and only if $\tr(X T_1(X)^2)\ge0,\dots,\tr(X T_\ell(X)^2)\ge0$, where $T_j$ are polynomials in $X$ and traces of its powers, dependent only on $D$. In particular, by the Krivine-Stengle Positivstellensatz, minors of $X$ are quotients of elements from the preordering in $\R[\tr(X),\dots,\tr(X^D)]$ generated by $\tr(X T_1(X)^2),\dots,\tr(X T_\ell(X)^2)$. This fact is then applied to $X= \y(H_d)$, in which case one can furthermore verify that $\tr(\y(H_d) T_j(\y(H_d))^2)\in\qm{\emptyset,\emptyset}$ for $j=1,\dots,\ell$.
\end{proof}

\begin{theorem}\label{t:h17pseudo}
Let $f\in\MP$. Then all pseudo-moment evaluations of $f$ are nonnegative if and only if $f$ is a quotient of elements in $\widehat\Omega$.
\end{theorem}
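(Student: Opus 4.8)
The plan is to prove the nontrivial direction: if $\varphi(f) \ge 0$ for every pseudo-moment evaluation $\varphi$, then $f \in \widehat\Omega \cdot (\widehat\Omega)^{-1}$ (a quotient of elements of $\widehat\Omega$). The easy direction is immediate: if $f = p/q$ with $p, q \in \widehat\Omega$, then since every pseudo-moment evaluation $\varphi$ sends generators $h^2$ and $\y(h^2)$ to nonnegative reals (the latter by definition, the former trivially), it sends all of $\widehat\Omega$ to $\R_{\ge 0}$, hence $\varphi(p), \varphi(q) \ge 0$, and since $\varphi(q) \neq 0$ (as $q$ divides something, or by handling the degenerate case separately) we get $\varphi(f) = \varphi(p)/\varphi(q) \ge 0$.

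For the hard direction, the strategy is to reduce to the Krivine–Stengle Positivstellensatz by realizing $\MP$ as (a localization/quotient of) a genuine polynomial ring in finitely many variables together with finitely many sign conditions — but the catch is that $\MP$ has infinitely many variables $\y_{i_1,\dots,i_n}$ and the ``defining inequalities'' of a pseudo-moment evaluation (positive semidefiniteness of all $\y(H_d)$) involve infinitely many minors. So I would first observe, via Lemma~\ref{l:pseudo}, that $\varphi$ ranges over pseudo-moment evaluations iff $(\varphi \circ \y)(H_d) \succeq 0$ for all $d$, and by Lemma~\ref{l:minors} each principal minor of $\y(H_d)$ is a quotient of elements of $\Omega \subseteq \widehat\Omega$. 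The key finiteness reduction: given a specific $f \in \MP$, it involves only finitely many of the $\y_{i_1,\dots,i_n}$, say those of degree $\le 2d$ for some $d$; write $f \in \mp_{2d} \otimes \px_{\le \deg f}$. Then I would try to argue that nonnegativity of $f$ on all pseudo-moment evaluations is controlled by finitely many sign conditions — namely positive semidefiniteness of the finite matrix $\y(H_d)$ — after possibly enlarging $d$ to accommodate degrees arising in the certificate. This is where the extension lemma (Lemma~\ref{l:hank_ext}) enters: a functional $L$ on $\px_{2d}$ with $L(H_d) \succ 0$ extends to $\widetilde L$ on all of $\px$ with $\widetilde L(p^2) > 0$ everywhere, so a pseudo-moment evaluation specified only up to degree $2d$ (and strictly positive there) genuinely extends to a pseudo-moment evaluation of $\MP$; combined with the perturbation Lemma~\ref{l:hank_perturb}, the closure of the set of degree-$\le 2d$ parts of pseudo-moment evaluations is exactly $\{\,\uX \in \R^n,\ L : \px_{2d} \to \R \text{ unital}, \ L(H_d) \succeq 0\,\}$.

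Having made this reduction, the concrete setup is: let $N = \binom{n + d}{d}$ (or the appropriate count of monomials in $[\ux]_d$), introduce commuting variables $Z = (Z_{uv})$ for the entries of a symbolic $N \times N$ symmetric matrix together with $x_1, \dots, x_n$, so that $f$ becomes a polynomial $F$ in $\R[x, Z]$ (via $\y_{i_1,\dots,i_n} \mapsto$ the appropriate entry $Z_{uv}$, using Hankel structure to make this consistent), and the hypothesis becomes: $F \ge 0$ on the semialgebraic set $\{Z \succeq 0\} \subseteq \R^n \times \sbb{N}$, which is cut out by the finitely many principal minors $\ge 0$. By Krivine–Stengle, $F$ is a quotient of elements of the preordering generated by these minors. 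Now pull back along $Z_{uv} \mapsto \y(uv)$: by Lemma~\ref{l:minors} each minor pulls back into $\Omega$-quotient form, i.e. there is $D \in \Omega$ with $D \cdot (\text{minor}) \in \Omega$; clearing these finitely many denominators and using that $\Omega, \widehat\Omega$ are preorderings (closed under products and sums) one arranges that $f$ itself becomes a quotient of elements of $\widehat\Omega$. One must be slightly careful that pulling back a quotient-of-preordering expression where the preordering is generated by the $Z$-minors lands in $\widehat\Omega$ after clearing denominators — this uses that $\widehat\Omega$ already contains $\Omega$ and all squares $f_0^2$, and is multiplicatively closed.

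The main obstacle I expect is the reduction step making the infinitely-many-variables / infinitely-many-minors problem finite: one must show that a pseudo-moment evaluation is, for the purposes of a fixed $f$ and its eventual certificate, determined by finitely much data, and that the relevant semialgebraic set in finitely many variables has the right closure properties so that Krivine–Stengle applies and the pullback is valid. The extension lemma and perturbation lemma are clearly designed for exactly this, but stitching them together — in particular handling the non-strict ($L(H_d) \succeq 0$ but possibly singular) boundary points, and making sure the degree bound $d$ can be chosen \emph{before} knowing the certificate (or iterating: certificate at level $d$ exists, or pass to level $d+1$, etc.) — is the delicate part. A secondary subtlety is the passage from ``$F$ is a quotient of elements of the preordering generated by the minors in $\R[x,Z]$'' back to ``$f$ is a quotient of elements of $\widehat\Omega$'': the denominators produced by Lemma~\ref{l:minors} are themselves only quotients, so one has to clear denominators cleanly, which works because products of elements of $\Omega$ lie in $\Omega$ and $\Omega \subseteq \widehat\Omega$.
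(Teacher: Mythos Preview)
Your outline for the hard direction is essentially the paper's proof, read directly rather than contrapositively. The paper fixes $d=\deg f$, works in the finitely generated subring $A\subseteq\MP$ generated by $x_1,\dots,x_n$ and $\y(u)$ for $u\in[\ux]_{2d}$, and takes $P$ to be the preordering in $A$ generated by the principal minors of $\y(H_d)$; if $f$ is not a quotient from $\widehat\Omega$, then (after clearing the denominators supplied by Lemma~\ref{l:minors}, just as you describe) it is not a quotient from $P$, so Krivine--Stengle produces a homomorphism $\phi:A\to\R$ with $\phi(f)<0$ and all minors nonnegative, which is then perturbed (Lemma~\ref{l:hank_perturb}) and extended (Lemma~\ref{l:hank_ext}) to a full pseudo-moment evaluation. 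Your worry about fixing $d$ before seeing the certificate is unwarranted: Krivine--Stengle is applied inside the finitely generated ring $A$, so the certificate lives there automatically.

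The genuine gap is in your easy direction. The parenthetical ``$\varphi(q)\neq 0$ as $q$ divides something'' is wrong: $fq=p$ in $\MP$ only forces $\varphi(p)=0$ whenever $\varphi(q)=0$, and says nothing about $\varphi(f)$. The paper's fix is a convex-combination argument. Given a pseudo-moment evaluation $\varphi$ with $\varphi(f)<0$, choose a second pseudo-moment evaluation $\varphi'$ with $\varphi'(q)>0$ (since $0\neq q\in\widehat\Omega$, one can take $\varphi'$ to be a genuine moment evaluation at which $q$ does not vanish), and form $\varphi_\ve$ by replacing the functional $p\mapsto\varphi(\y(p))$ by $(1-\ve)\varphi(\y(p))+\ve\,\varphi'(\y(p))$ while keeping the $x$-part fixed. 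Each $\varphi_\ve$ is again a pseudo-moment evaluation, and $\varphi_\ve(f)$, $\varphi_\ve(q)$ are polynomials in $\ve$ with $\varphi_0(f)<0$, $\varphi_\ve(q)\ge 0$ throughout, and $\varphi_1(q)>0$. Hence for some small $\ve>0$ one has both $\varphi_\ve(f)<0$ and $\varphi_\ve(q)>0$, yielding the contradiction $0>\varphi_\ve(q)\,\varphi_\ve(f)=\varphi_\ve(p)\ge0$.
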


\begin{proof}
$(\Rightarrow)$ Let $d=\deg f$. Let $A$ be the polynomial ring generated by $x_1,\dots,x_n$ and $\y(u)$ for $u\in [\ux]_{2d}$; in particular, $A$ is a finitely generated polynomial ring. Let $P$ be the preordering in $A$ generated by the principal minors of $\y(H_d)$. Then $P$ is a finitely generated preordering, and contained in $\widehat\Omega$ by Lemma \ref{l:minors}.
Assume $f$ is not a quotient of elements in $\widehat\Omega$. Then $f$ is not a quotient of elements in $P$. By the Krivine-Stengle Positivstellensatz \cite[Theorem 2.2.1]{marshallbook} there is a homomorphism $\phi:A\to\R$ such that $\phi(f)<0$ and $\phi$ is nonnegative on the principal minors of $\y(H_d)$. In particular, the matrix $(\phi\circ\y)(H_d)$ is positive semidefinite. Note that $\phi$ is determined by $\phi(x_j)$ for $j=1,\dots,n$ and the linear functional $L:\px_d\to\R$ given by $L(p)=\phi(\y(p))$. By Lemma \ref{l:hank_perturb}, we can slightly perturb $L$, so that $L(p^2)>0$ for $p\in\px_d\setminus\{0\}$, and still $\phi(f)<0$. By Lemma \ref{l:hank_ext}, $L$ extends to $\widetilde L:\px\to\R$ such that $\widetilde L(p^2)\ge0$ for $p\in\px$. Define a homomorphism $\widetilde\phi:\MP\to\R$ determined by $\widetilde\phi(p) = \phi(p)$ and $\widetilde\phi(\y(p)) = \widetilde L(p)$ for $p\in\px$. Then $\widetilde\phi$ is a pseudo-moment evaluation by Lemma \ref{l:pseudo}, and $\widetilde\phi(f)<0$.

$(\Leftarrow)$ Let $f =\frac{g}{h}$ for some nonzero $g,h\in\widehat\Omega$. Suppose $\phi(f)<0$ for a pseudo-moment evaluation $\phi:\MP\to\R$. Note that $\psi(g),\psi(h)\ge0$ for all pseudo-moment evaluations $\psi:\MP\to\R$. Since $h\neq0$, there exists 
a pseudo-moment evaluation $\phi':\MP\to\R$ such that $\phi'(h)>0$ (indeed, one can even choose $\phi'$ arising from evaluation at a point in $\R^n$ and a measure in $\prob{\R^n})$. For $\ve\in[0,1]$ let $\phi_\ve:\MP\to\R$ be the pseudo-moment evaluation determined by $\phi_\ve(\y(p)) = (1-\ve)\phi(\y(p))+\ve \phi'(\y(p))$ for $p\in\px$. Then there exists a sufficiently small $\ve>0$ so that $\phi_\ve(f)<0$ and $\phi_\ve(h)>0$. Hence
$$0>\phi_\ve(h)\phi_\ve(f)=\phi_\ve(g)\ge0,$$
a contradiction.
\end{proof}

\begin{example}[{\cite[Example 5.1]{blekherman2022}}]\label{ex:h17}
Let $f=\y_{4,2}\y_{2,4}-\y_{2,2}^3$. All moment evaluations of $f$ are nonnegative by H\"older's inequality applied to the three polynomials $x_1^4x_2^2,x_1^4x_2^2,1$ as
$$\int (x_1^4x_2^2\cdot x_1^4x_2^2\cdot 1)\d\mu\le
\left(\int x_1^4x_2^2\d\mu\right)^{\frac13}
\left(\int x_1^4x_2^2\d\mu\right)^{\frac13}
\left(\int 1\d\mu\right)^{\frac13}
.$$
On the other hand, consider the functional $L:\px_6\to\R$ given on the Hankel matrix $H_3$
as
\begin{equation}\label{e:greg}
L(H_3) = \begin{pmatrix}
 1 & 0 & 0 & 5 & 0 & 5 & 0 & 0 & 0 & 0 \\
 0 & 5 & 0 & 0 & 0 & 0 & 26 & 0 & 2 & 0 \\
 0 & 0 & 5 & 0 & 0 & 0 & 0 & 2 & 0 & 563 \\
 5 & 0 & 0 & 26 & 0 & 2 & 0 & 0 & 0 & 0 \\
 0 & 0 & 0 & 0 & 2 & 0 & 0 & 0 & 0 & 0 \\
 5 & 0 & 0 & 2 & 0 & 563 & 0 & 0 & 0 & 0 \\
 0 & 26 & 0 & 0 & 0 & 0 & 587 & 0 & 1 & 0 \\
 0 & 0 & 2 & 0 & 0 & 0 & 0 & 1 & 0 & 1 \\
 0 & 2 & 0 & 0 & 0 & 0 & 1 & 0 & 1 & 0 \\
 0 & 0 & 563 & 0 & 0 & 0 & 0 & 1 & 0 & 319642
\end{pmatrix}
\end{equation}
Note that the right-hand side of \eqref{e:greg} is positive definite, and $L(x_1^4x_2^2)L(x_1^2x_2^4)-L(x_1^2x_2^2)^3 = 1-2^3=-7$. By Lemma \ref{l:hank_ext}, $L$ extends to $\widetilde L:\px\to\R$ such that $\widetilde L(p^2)\ge0$ for $p\in\px$.
Therefore, $\phi(f)<0$ for the pseudo-moment evaluation $\phi$ determined by $\widetilde L$ (and any evaluation on $x_1,x_2$),
so $f$ is neither a quotient of sums of products of elements in $\QM{\emptyset,\emptyset}$, nor in $\qm{\emptyset,\emptyset}$, by Theorem \ref{t:h17pseudo}.
\end{example}

\section{Archimedean Positivstellensatz for moment polynomials}
\label{sec:arch}

The main result of this section, Theorem \ref{t:arch}, describes moment polynomials that are positive subject to constraints on measures with a given compact support.
Recall \cite[Section 5.2]{marshallbook} that a quadratic module $A$ in a commutative unital ring $A$ is \emph{archimedean} if for every $a\in A$ there exists $N\in\N$ such that $N\pm a\in M$.
Equivalently, a quadratic module $M$ in $\px$ is archimedean if and only if there is an $N\in\N$ such that $N-x_1^2-\cdots-x_n^2\in M$ \cite[Corollary 5.2.4]{marshallbook}.
We start by observing how archimedianity in $\px$ transfers to archimedianity in $\MP$.

\begin{lemma}\label{l:arch}
Let $S\subseteq\px$.
If $\QM{S}\subseteq\px$ is archimedean, then $\qm{S,\emptyset}\subseteq\mp$ and $\QM{S,\emptyset}\subseteq\MP$ are archimedean.
\end{lemma}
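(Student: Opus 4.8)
The goal is to show that archimedeanity of $\QM{S}$ in $\px$ propagates upward: first to $\qm{S,\emptyset}\subseteq\mp$, then to $\QM{S,\emptyset}\subseteq\MP$. The key structural fact is that archimedeanity of a quadratic module is equivalent to the module containing $N-x_1^2-\cdots-x_n^2$ for some $N$ (by \cite[Corollary 5.2.4]{marshallbook}), so it suffices to bound each generator. I would proceed in two stages, handling the ``pure'' variables $x_j$ and the moment variables $\y_{i_1,\dots,i_n}$ separately, since the defining generators of $\MP$ as an $\R$-algebra are $x_1,\dots,x_n$ together with all $\y_{i_1,\dots,i_n}$.

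\textbf{Step 1: bounding the $x_j$.} Since $\QM{S}$ is archimedean, there is $N_0\in\N$ with $N_0-x_1^2-\cdots-x_n^2\in\QM{S}$. Writing this as a sum $\sum_i p_i^2 s_i$ with $s_i\in\{1\}\cup S$ and $p_i\in\px$, the same expression witnesses $N_0-x_1^2-\cdots-x_n^2\in\QM{S,\emptyset}$, because $\px\subseteq\MP$ and each $p_i^2 s_i$ is a generator of type \eqref{e:gens2}. So the ``free'' part is immediate, and in particular $N_0\pm x_j\in\QM{S,\emptyset}$ for all $j$ (from $N_0+1\pm x_j = (N_0 - x_j^2) + (\tfrac{1}{2}\pm x_j)^2\cdot$ rescaled, or more simply from the standard fact that a module containing $N-\sum x_j^2$ contains $N'\pm x_j$ for suitable $N'$).

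\textbf{Step 2: bounding the moment variables.} This is the main point. I need, for each multi-index $(i_1,\dots,i_n)$, some $N\in\N$ with $N\pm\y_{i_1,\dots,i_n}\in\qm{S,\emptyset}$ (hence in $\QM{S,\emptyset}$). The idea: from Step 1, for each $j$ there is $M_j\in\N$ with $M_j - x_j^2\in\QM{S}$, hence $M_j^k - x_j^{2k}$ lies in $\QM{S}$ for every $k$ (a product/telescoping argument: $M_j^k - x_j^{2k} = \sum_{\ell=0}^{k-1} M_j^{k-1-\ell} x_j^{2\ell}(M_j - x_j^2)$, and each summand is a square times an element of $\QM{S}$, using that $\QM{S}$ is a quadratic module and $M_j^{k-1-\ell}$ is a nonnegative scalar). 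More generally, for a monomial $x^{2\alpha}$ one gets a scalar bound $C_\alpha - x^{2\alpha}\in\QM{S}$ by iterating. Now apply the formal integration map: for $s\in\{1\}\cup S$ and $f\in\MP$ with $f = x^\alpha$, the generator $\y(f^2 s) = \y(x^{2\alpha} s)\in\qm{S,\emptyset}$; combining $C_\alpha\cdot\y(s) - \y(x^{2\alpha}s) = \y((C_\alpha - x^{2\alpha})s)$ — wait, this requires $(C_\alpha - x^{2\alpha})s$ to be expressible appropriately. The cleaner route: since $C_\alpha - x^{2\alpha}\in\QM{S}$, write it as $\sum_i q_i^2 s_i$; then $\y\big((C_\alpha - x^{2\alpha})\big) = C_\alpha - \y_{2\alpha}$ but we want this in $\qm{S,\emptyset}$, and indeed $\y(\sum_i q_i^2 s_i) = \sum_i \y(q_i^2 s_i)\in\qm{S,\emptyset}$ by definition of the generators. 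Taking $\alpha$ to have a single nonzero coordinate handles $\y_{0,\dots,2k,\dots,0}$; then the symbolic Hölder inequality (Lemma \ref{l:holder}(2)) gives $\y_{2ki_1,\dots,2ki_n} - \y_{i_1,\dots,i_n}^{2k}\in\qm{\emptyset,\emptyset}\subseteq\qm{S,\emptyset}$. Chaining: a suitable even monomial moment $\y_{2\beta}$ is bounded by a scalar, and $\y_\gamma^2\le$ some $\y_{2\gamma'}$ up to the Hölder element, so $\y_\gamma^2$ is bounded by a scalar modulo $\qm{S,\emptyset}$, whence $N\pm\y_\gamma\in\qm{S,\emptyset}$ via the identity $(N+1)\pm\y_\gamma = (N-\y_\gamma^2)+(\tfrac12\pm\y_\gamma)^2 + \text{const}$ (adjusting constants). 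This shows every generator of $\mp$ is bounded in $\qm{S,\emptyset}$; since the bounded elements form a subring containing all generators, $\qm{S,\emptyset}$ is archimedean, and $\QM{S,\emptyset}\supseteq\qm{S,\emptyset}$ together with Step 1 then contains $N-x_1^2-\cdots-x_n^2$ and is archimedean as well.

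\textbf{Main obstacle.} The delicate bookkeeping is in Step 2: one must be careful that applying $\y$ to a representation $\sum q_i^2 s_i\in\QM{S}$ genuinely lands in $\qm{S,\emptyset}$ (it does, precisely because the generators of $\qm{S,\emptyset}$ are $\y(f^2 s)$ with $s\in\{1\}\cup S$, $f\in\MP$, and $\mp\supseteq\R$ so scalars pose no problem), and that the passage from ``$x^{2\alpha}$ bounded by a scalar in $\QM{S}$'' to ``$\y_\gamma$ bounded by a scalar in $\qm{S,\emptyset}$'' correctly uses Lemma \ref{l:holder} to reduce arbitrary $\gamma$ to even monomial moments. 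Everything else — the telescoping for powers, the ``bounded elements form a subring'' principle \cite[Section 5.2]{marshallbook}, and the final assembly — is routine.
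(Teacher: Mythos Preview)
Your argument is correct, but it takes a noticeably longer route than the paper's proof. The paper observes that archimedeanity of $\QM{S}$ already means $N\pm x_1^{i_1}\cdots x_n^{i_n}\in\QM{S}$ for \emph{every} multi-index $(i_1,\dots,i_n)$, not just even ones; writing this as a sum of terms $p^2s$ with $p\in\px$, $s\in\{1\}\cup S$ and applying the $\R$-linear map $\y$ immediately gives $N\pm\y_{i_1,\dots,i_n}\in\qm{S,\emptyset}$, after which \cite[Proposition 5.2.3]{marshallbook} finishes both claims in one stroke. You instead restrict to even exponents (bounding $x_j^{2k}$ and then $x^{2\alpha}$ by telescoping), apply $\y$ to obtain bounds on even-index moments $\y_{2\alpha}$, and only then invoke Lemma~\ref{l:holder}(2) to pass from $\y_{2\gamma}$ to $\y_\gamma^2$ and hence to $N\pm\y_\gamma$. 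The H\"older step and the telescoping are entirely avoidable once you notice that $N\pm x^\gamma\in\QM{S}$ holds outright for arbitrary $\gamma$.

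One minor phrasing issue: your last sentence suggests that $\QM{S,\emptyset}$ is archimedean because it contains $N-x_1^2-\cdots-x_n^2$. That alone is not enough, since $\MP$ has infinitely many additional generators $\y_{i_1,\dots,i_n}$; what actually makes $\QM{S,\emptyset}$ archimedean is that both the $x_j$ (from Step~1) and the $\y_{i_1,\dots,i_n}$ (from Step~2, via $\qm{S,\emptyset}\subseteq\QM{S,\emptyset}$) are bounded, so the subring principle applies. You have all the ingredients; just state the conclusion more carefully.
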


\begin{proof}
Let $(i_1,\dots,i_n)\in\N_0^n$ be arbitrary. Since $\QM{S}$ is archimedean, there exists $N>0$ such that $N\pm x_1^{i_1}\cdots x_n^{i_n}$ is a convex combination of some $p^2s$ for $p\in\px$ and $s\in S\cup\{1\}$. Hence, $N\pm \y_{i_1,\dots,i_n} \in \qm{S,\emptyset}$. 
Consequently, $\qm{S,\emptyset}$ and $\QM{S,\emptyset}$ are archimedean by \cite[Proposition 5.2.3]{marshallbook}.
\end{proof}

The following theorem is the main result of this section.

\begin{theorem}[Archimedean Positivstellensatz]\label{t:arch}
Let $S_1\subseteq\px$ and $S_2\subseteq\mp$, and suppose $\QM{S_1}$ is archimedean in $\px$. 
The following statements are equivalent for $f\in\MP$:
\begin{enumerate}[\rm (i)]
    \item $f\ge0$ on $\bcK(S_1,S_2)\times K(S_1)$;
    \item $f+\ve\in \QM{S_1, S_2}$ for all $\ve>0$.
\end{enumerate}
The following statements are equivalent for $f\in\mp$:
\begin{enumerate}[\rm (i')]
	\item $f\ge0$ on $\bcK(S_1,S_2)$;
	\item $f+\ve\in \qm{S_1, S_2}$ for all $\ve>0$.
\end{enumerate}
\end{theorem}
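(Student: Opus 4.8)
The implication (ii)$\Rightarrow$(i) is Proposition \ref{p:ezi} (together with the observation that $\ve$ can be sent to $0$), and similarly (ii')$\Rightarrow$(i') follows from the definition of $\qm{S_1,S_2}$. The substance is the forward direction (i)$\Rightarrow$(ii). My approach is the standard Jacobi--Prestel style argument: pass to a quotient by an ideal of ``infinitesimals'' and apply an abstract representation theorem. Concretely, fix $f\in\MP$ with $f\ge0$ on $\bcK(S_1,S_2)\times K(S_1)$, and suppose toward a contradiction that $f+\ve\notin\QM{S_1,S_2}$ for some $\ve>0$. First I would note that by Lemma \ref{l:arch} the quadratic module $M:=\QM{S_1,S_2}$ is archimedean (we may assume $S_2$ contributes nothing to this — it does not affect archimedianity, which comes from $S_1$). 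Then I would invoke the Jacobi--Prestel representation theorem (e.g.\ \cite[Theorem 5.4.4]{marshallbook}): since $M$ is an archimedean quadratic module in $\MP$ and $f+\ve\notin M$, there exists an $\R$-algebra homomorphism $\varphi:\MP\to\R$ such that $\varphi$ is nonnegative on $M$ and $\varphi(f+\ve)\le 0$, hence $\varphi(f)\le -\ve<0$.

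The heart of the proof is then to show that any such $\varphi$ is in fact a \emph{moment evaluation}, i.e.\ of the form $g\mapsto g(\mu,\uX)$ for some $\uX\in K(S_1)$ and $\mu\in\bcK(S_1,S_2)$; this immediately contradicts $f\ge0$ on $\bcK(S_1,S_2)\times K(S_1)$. To do this, set $X_j:=\varphi(x_j)$ and define the unital linear functional $L:\px\to\R$ by $L(p):=\varphi(\y(p))$. Since $\varphi\ge0$ on $M$ and $\y(p^2)\in M$ for all $p\in\px$, we get $L(p^2)\ge0$; moreover $L(p^2 s)\ge 0$ for $s\in S_1\cup\{1\}$, and $\varphi\ge0$ on $S_2$ gives $L(s)\ge 0$ for $s\in S_2$ (here one must be slightly careful: $\varphi$ restricted to $\mp$ agrees with $L$ because $\y$ is $\mp$-linear and $\varphi(\y(q))=\varphi(q)\varphi(\y(1))=\varphi(q)$ for $q\in\mp$, using $\y(1)=1$). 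Crucially, archimedianity of $\qm{S_1,\emptyset}$ (Lemma \ref{l:arch}) forces $|L(x_1^{i_1}\cdots x_n^{i_n})|\le N_{i_1,\dots,i_n}$ for suitable constants, so $L$ is a bounded ``moment sequence'' supported on a bounded set. By the solution of the $K$-moment problem for compact semialgebraic sets (Putinar / Haviland together with archimedianity — see \cite[Theorem 3.2.2]{marshallbook} or the discussion around Schm\"udgen's theorem), there is a Borel probability measure $\mu$ supported on $K(S_1)$ with $\int x_1^{i_1}\cdots x_n^{i_n}\,\d\mu = L(x_1^{i_1}\cdots x_n^{i_n})$ for all multi-indices; in particular $\mu\in\prob{K(S_1)}$, and $s(\mu)=L(s)\ge0$ for $s\in S_2$, so $\mu\in\bcK(S_1,S_2)$. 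Finally, one checks that $\uX=(X_1,\dots,X_n)\in K(S_1)$: for $p\in S_1$, $\varphi(p)=p(\uX)$ and $p\in M$ gives $p(\uX)\ge0$. Then $\varphi$ coincides with the moment evaluation at $(\mu,\uX)$ on generators, hence everywhere, and $\varphi(f)=f(\mu,\uX)\ge0$, the desired contradiction.

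\textbf{The main obstacle.} The delicate point is the moment-problem step: one needs that the functional $L$ on $\px$, which a priori is only known to be nonnegative on the archimedean quadratic module $\qm{S_1,\emptyset}$ in \emph{$\mp$ pulled back through $\varphi$}, is genuinely represented by a measure on $K(S_1)$. This requires knowing that $\varphi$ restricted to the subalgebra generated by the $\y_{i_1,\dots,i_n}$ behaves like a bona fide positive functional on $\px$ — which is exactly what $L(p^2 s)\ge 0$ plus archimedianity (hence the Haviland/Putinar criterion) provides, but one must verify the archimedean bound on $L$ carefully, i.e.\ that $N-\sum x_j^2\in\QM{S_1}$ yields $N-\sum\y_{2e_j}\ge0$ under $L$ and, more importantly, the uniform moment growth needed to invoke \cite[Theorem 3.2.2]{marshallbook}. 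The pure case (i')$\Leftrightarrow$(ii') is handled by the same argument applied directly in $\mp$ with $\qm{S_1,S_2}$ in place of $\QM{S_1,S_2}$, noting that an $\R$-algebra homomorphism $\mp\to\R$ nonnegative on $\qm{S_1,S_2}$ again yields such an $L$ and hence a measure; I would simply remark that the proof is analogous. One should also double-check the reduction allowing us to assume $S_1$ (rather than $S_1\cup S_2$) drives archimedianity — this is fine since $S_2\subseteq\mp$ and the hypothesis is stated as $\QM{S_1}$ archimedean in $\px$.
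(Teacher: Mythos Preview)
Your proposal is correct and follows essentially the same route as the paper: invoke the Kadison--Dubois representation theorem \cite[Theorem 5.4.4]{marshallbook} to obtain a homomorphism $\varphi:\MP\to\R$ nonnegative on $\QM{S_1,S_2}$ with $\varphi(f)<0$, define $L=\varphi\circ\y$ on $\px$, and solve the compact $K$-moment problem for $L$ to produce $\mu\in\bcK(S_1,S_2)$. The ``main obstacle'' you flag is disposed of in the paper in a single line---since $L$ is nonnegative on the archimedean quadratic module $\QM{S_1}\subset\px$ (not on some pullback through $\mp$), the compact moment problem (cited there as \cite[Theorem 12.36(ii)]{schmbook}) applies directly, with no separate growth verification needed.
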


\begin{proof}
We only prove the first equivalence (the proof of second one is analogous). The implication (ii)$\Rightarrow$(i) is straightforward. 
Now suppose (ii) is false. 
Since $\QM{S_1,S_2}$ is archimedean, the Kadison-Dubois representation theorem 
\cite[Theorem 5.4.4]{marshallbook} states that $\QM{S_1,S_2}$ contains all $g\in\MP$ which are positive under all homomorphisms $\MP\to\R$ that are nonnegative on $\QM{S_1,S_2}$. In particular, since $f+\ve\notin\QM{S_1,S_2}$, there exists a homomorphism $\varphi:\MP\to\R$ such that $\varphi(\QM{S_1,S_2})=\R_{\ge0}$ and $\varphi(f+\ve)\le0$ (in particular, $\varphi(f)<0$). Then $\uX:=(\varphi(x_1),\dots,\varphi(x_n))\in K(S_1)$. Consider the unital functional $L:\px\to\R$ given by $L(p)=\varphi(\y(p))$. Then $L$ is nonnegative on $\QM{S_1}$, so by the solution of the moment problem on compact sets \cite[Theorem 12.36 (ii)]{schmbook} there is $\mu\in \prob{K(S_1)}$ such that $L(p)=\int p\d\mu$ for all $p\in\px$. By the construction, $\mu\in \bcK(S_1,S_2)$. Therefore, $(\mu,\uX)\in \bcK(S_1,S_2)\times K(S_1)$ and $f(\mu,\uX)=\varphi(f)<0$.
\end{proof}

\begin{remark}\label{r:ncstate}
The equivalence (i')$\Leftrightarrow$(ii') in Theorem \ref{t:arch} also follows from \cite[Theorem 5.5]{KMVW} on state polynomials and their evaluations on constrained tuples of bounded operators and states. Indeed, the class of admissible constraints in \cite[Theorem 5.5]{KMVW} is large enough to allow for commutators, and thus one can consider positivity of state polynomials on commuting bounded operators subject to archimedean constraints. The second part of Theorem \ref{t:arch} can be then obtained using the spectral theorem for tuples of commuting bounded operators \cite[Theorem 5.23]{schmUnbded}.
However, note that the results of \cite{KMVW} carry implications only for pure moment polynomials, but not for general moment polynomials, and are not applicable to the equivalence (i)$\Leftrightarrow$(ii) in Theorem \ref{t:arch}.
\end{remark}

\begin{corollary}\label{c:strict}
Let $S_1\subset\px$ and $S_2\subset\mp$, and suppose $\QM{S_1}$ is archimedean in $\px$. If $f\in\MP$ is strictly positive on $\bcK(S_1,S_2)\times K(S_1)$, then $f\in \QM{S_1,S_2}$.
\end{corollary}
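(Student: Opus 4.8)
The plan is to deduce Corollary \ref{c:strict} from Theorem \ref{t:arch} by a standard compactness-and-perturbation argument, exploiting the archimedean hypothesis to get a uniform lower bound on $f$. First I would note that the set $\bcK(S_1,S_2)\times K(S_1)$ is, in a suitable sense, compact: $K(S_1)$ is bounded (since $\QM{S_1}$ archimedean forces $N-\sum x_j^2\in\QM{S_1}$ for some $N$, hence $K(S_1)$ lies in a ball), and is closed by definition, so $K(S_1)$ is compact in $\R^n$; the set $\prob{K(S_1)}$ of probability measures supported on $K(S_1)$ is weak-$*$ compact, and $\bcK(S_1,S_2)$ is a weak-$*$ closed subset of it (each constraint $s(\mu)\ge0$ for $s\in S_2$ is a closed condition since $\mu\mapsto s(\mu)$ is weak-$*$ continuous, moments on a compact set being continuous). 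Alternatively, one can bypass topology on measures entirely and argue purely via Theorem \ref{t:arch}: if $f$ were not bounded below by a positive constant on $\bcK(S_1,S_2)\times K(S_1)$, I would derive a contradiction as follows.

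The cleanest route avoids explicitly invoking compactness of the space of measures. Suppose $f>0$ on $\bcK(S_1,S_2)\times K(S_1)$ but $f\notin\QM{S_1,S_2}$. I claim there is some $\varepsilon>0$ with $f-\varepsilon>0$ on $\bcK(S_1,S_2)\times K(S_1)$; granting this, Theorem \ref{t:arch} applied to $f-\varepsilon$ gives $(f-\varepsilon)+\varepsilon = f\in\QM{S_1,S_2}$ (taking the ``$\ve$'' in Theorem \ref{t:arch}(ii) to be precisely $\varepsilon$), a contradiction. So the whole corollary reduces to proving this claim, i.e.\ that a moment polynomial strictly positive on $\bcK(S_1,S_2)\times K(S_1)$ is in fact bounded below there by a positive constant. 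For this I would use the Kadison-Dubois / Stone-Weierstrass machinery directly: since $\QM{S_1,S_2}$ is archimedean (Lemma \ref{l:arch} handles the case $S_2=\emptyset$, and adding the generators $S_2$ only enlarges the module, preserving archimedeanity), the set $X$ of ring homomorphisms $\varphi\colon\MP\to\R$ nonnegative on $\QM{S_1,S_2}$ is compact in the topology of pointwise convergence, and every $g\in\MP$ induces a continuous function $\hat g$ on $X$; moreover each such $\varphi$ is, by the argument in the proof of Theorem \ref{t:arch} (the compact moment problem plus Remark-style reasoning), exactly of the form $g\mapsto g(\mu,\uX)$ for some $(\mu,\uX)\in\bcK(S_1,S_2)\times K(S_1)$. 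Hence $\hat f>0$ on the compact set $X$, so $\hat f\ge\varepsilon$ on $X$ for some $\varepsilon>0$, which says $f-\varepsilon$ is nonnegative under all such $\varphi$, equivalently $f-\varepsilon\ge0$ on $\bcK(S_1,S_2)\times K(S_1)$ by the correspondence just described.

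To assemble this into a proof I would proceed in the following order: (1) observe $\QM{S_1,S_2}$ is archimedean, since it contains $\QM{S_1,\emptyset}$ which is archimedean by Lemma \ref{l:arch}; (2) recall from the Kadison-Dubois representation theorem \cite[Theorem 5.4.4]{marshallbook} that the character space $X$ of $\QM{S_1,S_2}$ is compact and the evaluation map $g\mapsto\hat g$ embeds $\MP$ into $C(X)$; (3) invoke the identification, established inside the proof of Theorem \ref{t:arch}, of points of $X$ with pairs $(\mu,\uX)\in\bcK(S_1,S_2)\times K(S_1)$ (via $\uX=(\varphi(x_j))_j$ and the compact moment problem \cite[Theorem 12.36]{schmbook} applied to $L(p)=\varphi(\y(p))$); (4) conclude $\hat f$ attains a positive minimum $\varepsilon$ on $X$, so $f-\varepsilon\ge0$ on $\bcK(S_1,S_2)\times K(S_1)$; (5) apply Theorem \ref{t:arch}(i)$\Rightarrow$(ii) to $f-\varepsilon$ with the constant $\varepsilon$ to get $f=(f-\varepsilon)+\varepsilon\in\QM{S_1,S_2}$.

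The main obstacle is step (3): making sure the surjectivity ``every character of $\QM{S_1,S_2}$ comes from an actual pair $(\mu,\uX)$ in $\bcK(S_1,S_2)\times K(S_1)$'' is airtight. This is precisely what the body of the proof of Theorem \ref{t:arch} does, so I would simply cite that argument rather than redo it; the only subtlety is that there the homomorphism satisfied $\varphi(\QM{S_1,S_2})=\R_{\ge0}$, whereas the Kadison-Dubois character space $X$ consists of homomorphisms merely \emph{nonnegative} on $\QM{S_1,S_2}$ — but these are the same notion up to the harmless normalization that $\varphi(1)=1$, which holds automatically since $1\in\QM{S_1,S_2}$ and $-1+N\in\QM{S_1,S_2}$ for suitable $N$ force $\varphi(1)\ge0$, and in fact $\varphi$ being a unital ring homomorphism gives $\varphi(1)=1$. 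So no genuine difficulty arises; the corollary is essentially a formal consequence of Theorem \ref{t:arch} together with compactness of the character space.
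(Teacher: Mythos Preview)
Your proposal is correct, and the overall strategy (find $\varepsilon>0$ with $f-\varepsilon\ge0$ on $\bcK(S_1,S_2)\times K(S_1)$, then apply Theorem~\ref{t:arch}(i)$\Rightarrow$(ii) with that same $\varepsilon$) is exactly what the paper does.

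The difference lies in how the positive lower bound is obtained. The paper proceeds along the lines of your \emph{first} paragraph: since $K(S_1)$ is compact, the space of Borel probability measures on $K(S_1)$ is compact (by \cite[Theorem~II.6.4]{parthasarathy2005}) and coincides with $\prob{K(S_1)}$; hence $\bcK(S_1,S_2)\times K(S_1)$ is compact, the continuous function $f$ attains its minimum there, and that minimum is positive. That is the entire argument.

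Your developed alternative via the Kadison--Dubois character space $X$ is also valid, but it is a longer detour: you replace one compactness statement (on measures) by another (on characters), and then have to argue surjectivity of the identification $X\to \bcK(S_1,S_2)\times K(S_1)$ by recycling the moment-problem step from the proof of Theorem~\ref{t:arch}. This works, but buys nothing extra here; the direct measure-theoretic compactness is shorter and avoids re-entering the internals of Theorem~\ref{t:arch}. Where your route \emph{would} pay off is in settings where one lacks an a priori topological model for the constraint set but still has an archimedean module---then the character space gives compactness for free.
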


\begin{proof}
Since $K(S_1)$ is compact, the set of Borel probability measures supported on $K(S_1)$ is also compact by \cite[Theorem II.6.4]{parthasarathy2005}, and is equal to $\prob{K(S_1)}$ (the existence of all marginal moments for Borel measures on a compact subset of $\R^n$ is automatic). Therefore, $\bcK(S_1,S_2)\times K(S_1)$ is compact, so there is $\ve>0$ such that $f-\ve\ge0$ on $\bcK(S_1,S_2)\times K(S_1)$. Then $f\in\QM{S_1,S_2}$ by Theorem \ref{t:arch}.
\end{proof}

Corollary \ref{c:strict} also admits the following interpretation in the absence of the archimedean assumption.

\begin{corollary}\label{c:bded}
Let $S_1\subset\px$ and $S_2\subset\mp$, and suppose $K(S_1)\subset\R^n$ is bounded. Then the following statements are equivalent for $f\in\MP$:
\begin{enumerate}[\rm (i)]
    \item $f\ge0$ on $\bcK(S_1,S_2)\times K(S_1)$;
    \item $f+\ve\in \QM{\widetilde{S_1},S_2}$ for all $\ve>0$, where $\widetilde{S_1}$ is the set of all square-free products of elements in $S_1$.
\end{enumerate}
\end{corollary}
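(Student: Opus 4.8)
\textbf{Proof proposal for Corollary \ref{c:bded}.}

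The plan is to reduce Corollary \ref{c:bded} to Theorem \ref{t:arch} by passing from $S_1$ to a modified constraint set that generates an archimedean quadratic module while leaving the geometry unchanged. First I would observe that the implication (ii)$\Rightarrow$(i) is immediate from Proposition \ref{p:ezi} together with the fact that $K(\widetilde{S_1}) = K(S_1)$: every square-free product of elements of $S_1$ is nonnegative exactly on $K(S_1)$, so $\bcK(\widetilde{S_1},S_2)\times K(\widetilde{S_1}) = \bcK(S_1,S_2)\times K(S_1)$, and an element of $\QM{\widetilde{S_1},S_2}$ is nonnegative there; adding $\ve>0$ only helps.

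For (i)$\Rightarrow$(ii), since $K(S_1)$ is bounded, it is contained in a ball of radius $R$, so $R^2 - x_1^2 - \cdots - x_n^2 \ge 0$ on $K(S_1)$. The key step is to produce a representation of $R^2 - \sum_j x_j^2$ inside $\QM{\widetilde{S_1}}$ (the ordinary quadratic module in $\px$ generated by $\widetilde{S_1}$), which would make $\QM{\widetilde{S_1}}$ archimedean by \cite[Corollary 5.2.4]{marshallbook}. This is precisely the content of a standard lemma in real algebraic geometry: the preordering generated by a finite set $S_1$ with $K(S_1)$ bounded is archimedean, and moreover a bound can be certified already in the quadratic module generated by the square-free products $\widetilde{S_1}$ (see \cite[Theorem 7.1.11]{marshallbook} for the Schm\"udgen-type phenomenon; one uses that the preordering generated by $S_1$ coincides with the quadratic module generated by $\widetilde{S_1}$ since products of distinct generators are already listed). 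Concretely, I would invoke Schm\"udgen's Positivstellensatz \cite[Theorem 6.1.1]{marshallbook} to write a large $N$ minus $\sum_j x_j^2$ as a nonnegative combination of products $\prod_{s\in T} s$ over subsets $T\subseteq S_1$; collapsing repeated factors via $s = s\cdot(\tfrac{s+1}{2})^2 - s\cdot(\tfrac{s-1}{2})^2$-type identities, or more simply noting each such product lies in $\QM{\widetilde{S_1}}$ after absorbing squares, yields $N - \sum_j x_j^2 \in \QM{\widetilde{S_1}}$.

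Once $\QM{\widetilde{S_1}}$ is known to be archimedean, Lemma \ref{l:arch} gives that $\QM{\widetilde{S_1},\emptyset}$, and hence $\QM{\widetilde{S_1},S_2}$, is archimedean in $\MP$. Now Theorem \ref{t:arch} applied with the constraint pair $(\widetilde{S_1},S_2)$ yields: $f \ge 0$ on $\bcK(\widetilde{S_1},S_2)\times K(\widetilde{S_1})$ if and only if $f + \ve \in \QM{\widetilde{S_1},S_2}$ for all $\ve > 0$. Since $K(\widetilde{S_1}) = K(S_1)$ and $\bcK(\widetilde{S_1},S_2) = \bcK(S_1,S_2)$, this is exactly the asserted equivalence. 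The main obstacle is the real-algebra input in the middle step, namely verifying that boundedness of $K(S_1)$ forces a linear-polynomial bound into the quadratic module generated by square-free products rather than merely into the full preordering; this is classical but must be cited or argued carefully, and I would also double-check the subtlety that Corollary \ref{c:strict}'s compactness remark (measures supported on a compact set automatically have all moments) is what makes $\bcK(S_1,S_2)$ unambiguous here.
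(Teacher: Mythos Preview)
Your proposal is correct and follows essentially the same route as the paper: both arguments hinge on the fact that boundedness of $K(S_1)$ forces $\QM{\widetilde{S_1}}$ to be archimedean in $\px$ (the paper simply cites \cite[Corollary 6.1.2]{marshallbook} for this, while you unpack it via Schm\"udgen and the identification of the preordering with the quadratic module generated by square-free products), after which Theorem \ref{t:arch} applied to $(\widetilde{S_1},S_2)$ and the equality $K(\widetilde{S_1})=K(S_1)$ finish the job. Your aside about collapsing repeated factors is slightly muddled, but the ``more simply'' observation you give right after it is exactly the right one: any product of generators is a square times a square-free product, so the preordering of $S_1$ coincides with $\QM{\widetilde{S_1}}$.
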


\begin{proof}
If $K(S_1)$ is bounded, then $\QM{\widetilde{S_1}}$ is archimedean in $\px$ by \cite[Corollary 6.1.2]{marshallbook}. The rest then follows from Theorem \ref{t:arch}.
\end{proof}

\section{Moment polynomial optimization and examples}\label{s:mompop}

Theorem \ref{t:arch} can be applied to design a converging hierarchy of semidefinite programs (SDPs) for moment polynomial optimization as presented in this section. For the sake of simplicity, we first focus on pure moment polynomial objective functions, and then indicate the necessary changes for general moment polynomial objective functions. Finally, we demonstrate how this SDP hierarchy can be applied to problems in quantum information theory (Subsections \ref{sss:cov} and \ref{sss:biloc}).

Let $S_1\subset\px$ and $S_2\subset\mp$ be finite, and $r\in\N$. Recall that $\qm{S_1,S_2}_{2r}$ is the convex hull of 
\begin{align*}
\y(f^2s_1),\, q^2s_2\colon\quad
&s_i\in \{1\}\cup S_i,\, f\in\MP,\, q\in\mp,\\
&\deg s_1+2\deg f,\,
\deg s_2+2\deg q\le 2r.
\end{align*}
Membership in $\qm{S_1,S_2}_{2r}$ can be certified by an SDP; 
indeed, its members are of the form
$$
\sum_{s\in \{1\}\cup S_1}
\sum_{v_1,v_2}
G^{(s)}_{v_1,v_2} \cdot
\y(v_1v_2 s)
+
\sum_{t\in S_2}
\sum_{u_1,u_2}
H^{(t)}_{u_1,u_2} \cdot u_1u_2 t,
$$
where $v_i$ are monomials in $\MP_{r-\frac{\deg s}{2}}$, $u_i$ are monomials in $\mp_{r-\frac{\deg t}{2}}$, 
and $G^{(s)}$, $H^{(t)}$ are positive semidefinite matrices of dimensions $\dim\MP_{r-\frac{\deg s}{2}}$ and $\dim\mp_{r-\frac{\deg t}{2}}$, respectively.

For $f\in\mp$ and $r\ge \frac{\deg f}{2}$ consider the sequence of SDPs
\begin{equation}\label{e:fstSDP}
f_r=\sup\{\alpha\in\R\colon f-\alpha \in \qm{S_1,S_2}_{2r}\}. 
\end{equation}
Theorem \ref{t:arch} implies (under the archimedean assumption) that $f_r$ form a sequence of upper bounds converging to the infimum of $f$ on $\bcK(S_1,S_2)$.

\begin{corollary}\label{c:hier}
Let $S_1\subseteq \px$, $S_2\subseteq\mp$, $f\in\mp$, and suppose $\QM{S_1}$ is archimedean in $\px$. Then the sequence 
$\{f_r\}_{r\ge \frac{\deg f}{2}}$ arising from the
SDP hierarchy \eqref{e:fstSDP} converges monotonically to $f_*:=\inf_{\mu\in \bcK(S_1,S_2)} f(\mu)$ from below.
\end{corollary}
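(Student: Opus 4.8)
The plan is to deduce Corollary \ref{c:hier} directly from the Archimedean Positivstellensatz (Theorem \ref{t:arch}, in its pure form, equivalence (i')$\Leftrightarrow$(ii')) together with standard facts about the degree filtration on $\qm{S_1,S_2}$. First I would record the monotonicity and boundedness of the sequence. Monotonicity is immediate: $\qm{S_1,S_2}_{2r}\subseteq \qm{S_1,S_2}_{2(r+1)}$ since every generator $\y(f^2s_1)$ or $q^2s_2$ of degree at most $2r$ is also of degree at most $2r+2$, so the feasible sets of \eqref{e:fstSDP} are nested and $f_r\le f_{r+1}$. For the upper bound $f_r\le f_*$, note that if $f-\alpha\in\qm{S_1,S_2}_{2r}\subseteq\qm{S_1,S_2}$, then $f-\alpha\ge0$ on $\bcK(S_1,S_2)$ by Proposition \ref{p:ezi} (its pure-moment version), hence $\alpha\le f(\mu)$ for every $\mu\in\bcK(S_1,S_2)$, so $\alpha\le f_*$; taking the supremum over feasible $\alpha$ gives $f_r\le f_*$. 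Thus $\{f_r\}$ is monotone nondecreasing and bounded above by $f_*$, so it converges to some limit $L\le f_*$.

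The remaining and only substantive point is $L\ge f_*$, i.e.\ that the bounds are asymptotically tight. Fix $\ve>0$. Since $f-f_*+\ve>0$ on $\bcK(S_1,S_2)$ — here I use that $f_*=\inf_{\bcK(S_1,S_2)}f$, so $f\ge f_*$ there and hence $f-(f_*-\ve)\ge\ve>0$ — and $\QM{S_1}$ is archimedean, Theorem \ref{t:arch} (i')$\Rightarrow$(ii') applied with $\ve/2$ in place of its $\ve$, or more directly Corollary \ref{c:strict}'s pure-moment analogue, yields $f-(f_*-\ve)\in\qm{S_1,S_2}$. Any element of $\qm{S_1,S_2}$ lies in $\qm{S_1,S_2}_{2r}$ for some $r\in\N$, since it is a finite convex combination of generators \eqref{e:gens1}, each of which has finite degree. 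Therefore for that $r$ (and all larger $r$ by monotonicity of the filtration), $\alpha=f_*-\ve$ is feasible for \eqref{e:fstSDP}, so $f_r\ge f_*-\ve$ for all sufficiently large $r$, whence $L=\lim_r f_r\ge f_*-\ve$. Since $\ve>0$ was arbitrary, $L\ge f_*$, and combined with $L\le f_*$ we get $f_r\uparrow f_*$.

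I would write this up compactly: (1) the feasibility supremum in \eqref{e:fstSDP} is over a nondecreasing chain of sets, giving monotonicity; (2) feasible $\alpha$ are lower bounds for $f$ on $\bcK(S_1,S_2)$ via Proposition \ref{p:ezi}, giving $f_r\le f_*$; (3) for $\ve>0$, Theorem \ref{t:arch} puts $f-f_*+\ve$ in $\qm{S_1,S_2}$, and the degree filtration is exhaustive, so $f_*-\ve$ becomes feasible at some finite level; conclude. The main obstacle is essentially bookkeeping rather than mathematics: one must be slightly careful that Theorem \ref{t:arch} is invoked in its pure-moment ``primed'' version (membership in $\qm{S_1,S_2}$, not $\QM{S_1,S_2}$), that archimedeanity of $\QM{S_1}$ in $\px$ indeed propagates to $\qm{S_1,S_2}$ in $\mp$ (this is Lemma \ref{l:arch} when $S_2=\emptyset$, and adding the constraints in $S_2$ only enlarges the quadratic module, so it stays archimedean), and that the constant perturbation $\ve$ in Theorem \ref{t:arch} corresponds to the slack variable $\alpha$ in \eqref{e:fstSDP}. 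No single step is hard; the argument is the standard Lasserre-hierarchy convergence proof transported to the moment-polynomial setting.
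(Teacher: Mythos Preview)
Your proposal is correct and follows essentially the same argument as the paper's proof: monotonicity from the nested filtration $\qm{S_1,S_2}_{2r}\subseteq\qm{S_1,S_2}_{2(r+1)}$, the upper bound $f_r\le f_*$ from Proposition~\ref{p:ezi}, and tightness by applying Theorem~\ref{t:arch} (i')$\Rightarrow$(ii') to $f-f_*$ and then locating the resulting certificate at some finite level of the filtration. Your additional remarks on archimedeanity propagating via Lemma~\ref{l:arch} are accurate but not needed, since Theorem~\ref{t:arch} already hypothesizes only that $\QM{S_1}$ is archimedean in $\px$.
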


\begin{proof}
The sequence $\{f_r\}_r$ is increasing since $\qm{S_1,S_2}_{2r}\subseteq \qm{S_1,S_2}_{2(r+1)}$. Also, $f_r\le f_*$ because $f-\alpha\in\qm{S_1,S_2}_{2r}$ implies $f\ge\alpha$ on $\bcK(S_1,S_2)$. Now let $\ve>0$ be arbitrary.
Since $f-f_*\ge0$ on $\bcK(S_1,S_2)$, Theorem \ref{t:arch} implies $f-f_*+\ve\in\qm{S_1,S_2}$. Thus, there exists $r\in\N$ such that
$$f-f_*+\ve \in \qm{S_1,S_2}_{2r},$$
so $f_*-\ve \le f_r$.
Therefore, $\lim_{r\to \infty}f_r=f_*$.
\end{proof}

Corollary \ref{c:hier} is also a specialization of \cite[Corollary 6.1]{KMVW} from the state polynomial setup. Let us note a few further consequences of \cite[Section 6]{KMVW} without proofs:
\begin{enumerate}
    \item If $N-x_1^2-\cdots-x_n^2$ for some $N>0$ is a conic combination of $S_1\cup\{\ell^2\colon \ell \in\px_1\}$, then there is no duality gap between SDP \eqref{e:fstSDP} and its dual,
$$\inf\left\{
L(f)\colon L\in \mp_{2r}^\vee,\,L(1)=1,\,
L(\qm{S_1,S_2}_{2r})=\R_{\ge0}
\right\}.$$
    \item If the solution of the dual of \eqref{e:fstSDP} satisfies certain rank conditions, then the SDP hierarchy \eqref{e:fstSDP} stops, and one can extract a concrete finitely supported optimizer for $f_*$.
    \item While the sizes of SDPs \eqref{e:fstSDP} and their duals grow quickly in concrete applications, one can mitigate this by employing sparsity \cite{sparsebook} and symmetry reductions.
\end{enumerate}

To apply semidefinite programming to optimization of general moment polynomials, one needs to first address the following obstacle. Recall that  $\QM{S_1,S_2}_{2r}$ is the convex hull of 
\begin{align*}
f_1^2\y(g^2s),\, f_2^2t\colon\quad
&s\in \{1\}\cup S_1,\, t\in S_1\cup S_2,\,f_1,f_2,g\in\MP,\\
&\deg s_1+2(\deg f_1+\deg g),
\deg s_2+2\deg f_2\le 2r.
\end{align*}
While membership in $\QM{S_1,S_2}_{2r}$ is a feasibility linear conic program \cite[Section IV.6]{Bar}, it does not seem to be a semidefinite program. For this reason we consider a slightly larger cone.
Sums of elements of the form $f^2s$, where $f\in\MP$ and $s\in S_1\cup S_2$ satisfy $\deg s+2\deg f\le 2r$, are precisely
\begin{equation}\label{e:tosdp1}
\sum_{s\in S_1\cup S_2}\sum_{v_1,v_2} G_{v_1,v_2}^{(s)}\cdot v_1v_2s
\end{equation}
where $v_i$ are monomials in $\MP_{r-\frac{\deg s}{2}}$, and $G^{(s)}$ are positive semidefinite matrices of dimensions $\dim \MP_{r-\frac{\deg s}{2}}$.
On the other hand, sums of elements of the form $f^2\y(g^2s)$, where $f,g\in\MP$ and $s\in \{1\}\cup S_1$ satisfy $\deg s+2(\deg f+\deg g)\le 2r$, can be written as
\begin{equation}\label{e:tosdp2}
\sum_{s\in\{1\}\cup S_1}\sum_{(u_1,v_1),(u_2,v_2)}
G_{(u_1,v_1),(u_2,v_2)}^{(s)} \cdot
\y(u_1u_2 s)v_1v_2,
\end{equation}
where $(u_i,v_i)$ are pairs of monomials $u_i$ in $\px$ and $v_i$ in $\MP$ with $\deg s+2(\deg u_i+\deg v_i)\le 2r$, and $G^{(s)}$ are positive semidefinite matrices of size $\sum_{i+j\le r-\frac{\deg s}{2}} (\dim\px_i+\dim\MP_j)$.
Let $\QQM{S_1,S_2}_{2r}$ denote the set of sums of elements of the form \eqref{e:tosdp1} and \eqref{e:tosdp2}.
Then the cone $\QQM{S_1,S_2}_{2r}$ contains $\QM{S_1,S_2}_{2r}$, but is typically larger than $\QM{S_1,S_2}_{2r}$. For example, if $n\ge 4$ then
$$
\y(x_1^2)x_2^2+2\y(x_1x_3)x_2x_4+\y(x_3^2)x_4^2
$$
belongs to $\QQM{S_1,S_2}_{2r}$ (namely, it is of the form \eqref{e:tosdp2}) but not to $\QM{S_1,S_2}_{2r}$.
Nevertheless, elements of $\QQM{S_1,S_2}_{2r}$ are nonnegative on $\bcK(S_1,S_2)\times K(S_1)$, and membership in $\QQM{S_1,S_2}_{2r}$ can be determined by a feasibility SDP.
Given $f\in \MP$, the optimization problems \begin{equation}\label{e:fstSDP2}
f_r=\sup\{\alpha\colon f-\alpha\in \QQM{S_1,S_2}_{2r}\}
\end{equation}
for $r\ge \frac{\deg f}{2}$ are then SDPs, and the following analog of Corollary \ref{c:hier} holds.

\begin{corollary}\label{c:hier2}
Let $S_1\subseteq \px$, $S_2\subseteq\mp$, $f\in\MP$, and suppose $\QM{S_1}$ is archimedean in $\px$. Then the sequence 
$\{f_r\}_{r\ge \frac{\deg f}{2}}$ arising from the
SDP hierarchy \eqref{e:fstSDP2} converges monotonically to $f_*:=\inf_{(\mu,\uX)\in \bcK(S_1,S_2)\times K(S_1)} f(\mu,\uX)$ from below.
\end{corollary}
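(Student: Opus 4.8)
The plan is to mirror the proof of Corollary \ref{c:hier} almost verbatim, replacing $\qm{S_1,S_2}_{2r}$ with the larger cone $\QQM{S_1,S_2}_{2r}$ and $f_*=\inf f(\mu)$ with $f_*=\inf f(\mu,\uX)$ over $\bcK(S_1,S_2)\times K(S_1)$. First I would record the two elementary facts: the sequence $\{f_r\}_r$ is nondecreasing because $\QQM{S_1,S_2}_{2r}\subseteq\QQM{S_1,S_2}_{2(r+1)}$ (every monomial-type generator available at level $r$ is also available at level $r+1$), and $f_r\le f_*$ for all $r$ because $f-\alpha\in\QQM{S_1,S_2}_{2r}$ implies, by Proposition \ref{p:ezi} together with the observation made in the text that elements of $\QQM{S_1,S_2}_{2r}$ are nonnegative on $\bcK(S_1,S_2)\times K(S_1)$, that $f-\alpha\ge0$ there, hence $\alpha\le f_*$. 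Thus $\{f_r\}_r$ is a nondecreasing sequence bounded above by $f_*$, so $\lim_r f_r$ exists and is $\le f_*$.

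For the reverse inequality, fix $\ve>0$. Since $f-f_*\ge0$ on $\bcK(S_1,S_2)\times K(S_1)$ and $\QM{S_1}$ is archimedean in $\px$, Theorem \ref{t:arch} gives $f-f_*+\ve\in\QM{S_1,S_2}$. An element of $\QM{S_1,S_2}$ is a finite convex combination of generators of the form \eqref{e:gens2}, so there is some $r\in\N$ with $f-f_*+\ve\in\QM{S_1,S_2}_{2r}$; since $\QM{S_1,S_2}_{2r}\subseteq\QQM{S_1,S_2}_{2r}$ by construction, we get $f-f_*+\ve\in\QQM{S_1,S_2}_{2r}$, whence $f_r\ge f_*-\ve$. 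As $\ve>0$ was arbitrary, $\lim_r f_r\ge f_*$, and combining with the previous paragraph yields $\lim_r f_r=f_*$ with monotone convergence from below.

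I do not anticipate a genuine obstacle: the only point that needs a word of care is that passing from $f-f_*+\ve\in\QM{S_1,S_2}$ to membership at a \emph{finite} level $2r$ requires bounding the degrees of the terms appearing in the convex-combination representation, which is immediate since any single such representation involves finitely many generators of bounded degree, and then enlarging to $\QQM{S_1,S_2}_{2r}$ is free because that cone was defined precisely to contain $\QM{S_1,S_2}_{2r}$. The finiteness of $S_1$ and $S_2$ (assumed in the surrounding discussion so that \eqref{e:fstSDP2} is a bona fide SDP) is used only to guarantee that $f_r$ is attained by an SDP, not in the convergence argument itself; the convergence statement as phrased goes through even without it. Hence the proof is essentially a transcription of the proof of Corollary \ref{c:hier}, and I would present it as such, perhaps remarking that one may alternatively invoke \cite[Corollary 6.1]{KMVW} in the pure case and note that the general-moment-polynomial case requires the genuinely new cone $\QQM{S_1,S_2}_{2r}$ introduced above.
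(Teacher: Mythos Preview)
Your proposal is correct and matches the paper's intended approach: the paper does not give an explicit proof of Corollary~\ref{c:hier2} but presents it as ``the following analog of Corollary~\ref{c:hier},'' which is exactly the transcription you carry out. The only minor comment is that Proposition~\ref{p:ezi} concerns $\QM{S_1,S_2}$ rather than $\QQM{S_1,S_2}_{2r}$, so the soundness step rests solely on the observation in the text that elements of $\QQM{S_1,S_2}_{2r}$ are nonnegative on $\bcK(S_1,S_2)\times K(S_1)$; your parenthetical already acknowledges this.
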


Next, we demonstrate the above SDP hierarchy method on two open optimization problems arising from nonlinear Bell inequalities in quantum physics \cite{nobel}. 
A Bell scenario refers to a repeated experiment, where two or more parties, not communicating between themselves, perform measurements on a source of classical randomness and record outcomes in each round. Each party has a fixed number of measurements at disposal (with a fixed set of possible outputs), and uses one of them in each round. These measurements are viewed as random variables on a probability space that governs the source measured by the parties.
The conditional expectations of joint outcomes relative to measurements in such an experiment are called correlations. A Bell inequality for a given Bell scenario is an inequality in correlations that is valid regardless of the underlying probability space and measurements that model the experiment. Bell inequalities thus provide universal constraints on classical probabilistic models of a Bell scenario, and are widely used in quantum physics to certify quantum nonlocality (which violates Bell inequalities). 
Bell scenarios that model more sophisticated quantum networks call for nonlinear polynomial Bell inequalities, which are (from the perspective of this paper) special cases of moment polynomial inequalities.
Section \ref{sss:cov} confirms a covariance Bell inequality proposed in \cite{PHBB}, and Section \ref{sss:biloc} rectifies a bilocal Bell inequality proposed in \cite{TGB21}.

Encoding of moment polynomials and preparation of the SDPs was done in Mathematica, and numerical solutions of the SDPs were obtained with the SDP solver MOSEK. 
The final exact certificates provided below are heavily inspired by these numerical solutions. The ad-hoc Mathematica code for constructing and solving SDPs is available on GitHub: \href{https://github.com/magronv/mompop/}{\tt https://github.com/magronv/mompop}
(it terminates within a few minutes on a standard desktop PC), as well as the notebook for verifying the obtained exact certificates. 

\subsection{Covariance Bell inequality}\label{sss:cov}

\def\cov{\operatorname{cov}}
For $j=1,2,3$ let $A_j,B_j$ be binary random variables (valued in $\{-1,1\}$) on a probability space $(\cX,\Sigma,\pi)$, and consider the expression
\begin{align*}
\cov_{3322}(\underline{A},\underline{B}):=
&\cov(A_1,B_1)+\cov(A_1,B_2)+\cov(A_1,B_3) \\
+&\cov(A_2,B_1)+\cov(A_2,B_2)-\cov(A_2,B_3) \\
+&\cov(A_3,B_1)-\cov(A_3,B_2)
\end{align*}
where $\cov(X,Y)=\int XY\d\pi-\int X\d\pi\cdot\int Y\d\pi$.
In \cite{PHBB}, the authors ask for the largest possible value of $\cov_{3322}$.
They provide concrete examples of probability spaces (on a three-element set) and binary random variables where $\cov_{3322}$ attains the value $\frac92$.
In the quest for proving that $\cov_{3322}\le \frac92$ for all binary random variables, they propose a reduction to solving a certain number of linear systems. Nonetheless, for establishing this particular inequality, they estimate that more than $10^{14}$ linear systems would have to be solved, thus rendering this particular approach infeasible.
As an alternative, they suggest maximizing $\cov_{3322}$ via classical polynomial optimization similarly as in Remark \ref{r:popbad}. However, the corresponding polynomial problem has too many variables for global optimization tools to work. Thus they use numerical nonlinear optimization to look for local maxima of $\cov_{3322}$ from numerous starting points, which lends confidence to their conjecture that $\cov_{3322}\le 4.5$. Below, we settle this conjecture using the methods developed in this paper.

Let
\begin{align*}
f=
&\y_{100100}-\y_{100000}\,\y_{000100}
+\y_{100010}-\y_{100000}\,\y_{000010}
+\y_{100001}-\y_{100000}\,\y_{000001}\\
+&\y_{010100}-\y_{010000}\,\y_{000100}
+\y_{010010}-\y_{010000}\,\y_{000010}
-\y_{010001}+\y_{010000}\,\y_{000001}\\
+&\y_{001100}-\y_{001000}\,\y_{000100}
-\y_{001010}+\y_{001000}\,\y_{000010}.
\end{align*}
The question of \cite{PHBB} is equivalent to the moment polynomial optimization problem
$$f_* = \sup \ f \quad \text{subject to } x_j^2=1 \text{ for }j=1,\dots,6.$$
By Corollary \ref{c:hier}, we have $f_r\searrow f_*$ for
$$f_r=\inf\{\alpha\colon \alpha-f \in \qm{S,\emptyset}\}$$
with $S=\{\pm(1-x_j^2)\colon j=1,\dots,6\}$.
When constructing SDPs for $f_r$, we encode the relations of $S$ as substitution rules, to reduce the size of the SDPs. For $r=2$, the resulting SDP has 4146 indeterminates and the semidefinite constraint of size $100\times 100$, and yields $f_2=4.5$. 
Therefore, $f_* = 4.5$.
After carefully inspecting the output of the SDP solver (namely, heuristically extracting exact constraints from the kernel of the numerical positive semidefinite constraint) one can obtain an exact certificate for $f_*\le \frac92$. Concretely, modulo the relations $x_j^2=1$ we have
$\frac92-f=\y(v^*Gv)$, where
$$G=
\left(\begin{array}{rrrrrrrr}
 \frac{4}{3} & 0 & -\frac{1}{2} & 0 & 0 & 0 & 0 & 0 \\[1mm]
 0 & \frac{1}{32} & 0 & 0 & 0 & 0 & 0 & 0 \\[1mm]
 -\frac{1}{2} & 0 & \frac{3}{8} & \frac{1}{16} & 0 & 0 & 0 & 0 \\[1mm]
 0 & 0 & \frac{1}{16} & \frac{1}{8} & -\frac{1}{8} & 0 & \frac{3}{64} & \frac{3}{64} \\[1mm]
 0 & 0 & 0 & -\frac{1}{8} & \frac{1}{4} & -\frac{1}{8} & -\frac{1}{16} & -\frac{1}{16} \\[1mm]
 0 & 0 & 0 & 0 & -\frac{1}{8} & \frac{1}{4} & 0 & 0 \\[1mm]
 0 & 0 & 0 & \frac{3}{64} & -\frac{1}{16} & 0 & \frac{3}{64} & 0 \\[1mm]
 0 & 0 & 0 & \frac{3}{64} & -\frac{1}{16} & 0 & 0 & \frac{3}{64}
\end{array}\right)\in\R^{8\times 8}$$
is positive definite, and
$$v=\left(\begin{array}{c}
x_6\y_{110000}\\[1mm]
(x_1-x_2)(x_4+x_5)\\[1mm]
1+x_6(x_2-x_1+4\y_{100000})\\[1mm]
2-(x_1+x_2)(x_4+x_5)-8x_6\y_{100000}\\[1mm]
(x_3+2\y_{001000})(x_4-x_5)\\[1mm]
2x_4\y_{001000}-x_3x_5\\[1mm]
2+(x_4-x_5)(4\y_{001000}-x_1-x_2)+\frac83 x_4\y_{110000}+8x_6\y_{100000}\\[1mm]
2+(x_4-x_5)(4\y_{001000}+x_1+x_2)+\frac83 x_5\y_{110000}+8x_6\y_{100000}
\end{array}\right)\in\MP^8.
$$

\subsection{Bilocal Bell inequality}\label{sss:biloc}

In \cite{TGB21,tavakoli22}, the authors ask about the largest value of
\begin{equation}\label{e:tgb}
\frac13 \sum_{i\in\{1,2,3\} } \Big(\bE(B_iC_i)-\bE(A_iB_i)\Big)-\sum_{\{i,j,k\}=\{1,2,3\} } \bE(A_iB_jC_k)
\end{equation}
where $A_j,B_j,C_j$ for $j=1,2,3$ are binary random variables on a probability space $(\cX,\Sigma,\pi)$ satisfying bilocality constraints
\begin{equation}\label{e:biloc}
\bE(A_1^{k_1}A_2^{k_2}A_3^{k_3}C_1^{k_4}C_2^{k_5}C_3^{k_6})=
\bE(A_1^{k_1}A_2^{k_2}A_3^{k_3})\bE(C_1^{k_4}C_2^{k_5}C_3^{k_6})
\end{equation}
for all $k_i\in \{0,1\}$, and additional vanishing constraints
\begin{equation}\label{e:Z=0}
\begin{split}
&\bE(A_i)=\bE(B_i)=\bE(C_i)=0 \quad \text{for }
i\in \{1,2,3\}, \\
&\bE(A_iB_j)=\bE(B_iC_j)=0\quad \text{for }i\neq j, \\
&\bE(A_iB_jC_k)=0\quad \text{for }|\{i,j,k\}|\le2.
\end{split}
\end{equation}
Here, $\bE(X)=\int X\d \pi$.
In \cite{TGB21} it is shown that the largest value of \eqref{e:tgb} for bilocal models with the tetrahedral symmetry is 3. Furthermore, \cite{TGB21,tavakoli22} suggest that \eqref{e:tgb} can be at most 3 in general, and support this claim with numerical methods that search for local maxima.
However, as shown below, this claim is false; the largest value of \eqref{e:tgb} subject to \eqref{e:biloc} and \eqref{e:Z=0} is 4.

Consider the moment polynomial optimization problem
\begin{equation}\label{e:tgb_opt}
\begin{split}
&\sup \
\frac13 \sum_{i\in\{1,2,3\} } \Big(\y(x_{i+3}x_{i+6})-\y(x_ix_{i+3})\Big)-\sum_{\{i,j,k\}=\{1,2,3\} } \y(x_ix_{j+3}x_{k+6})\\
&\text{subject to }\\
& \y(x_1^{k_1}x_2^{k_2}x_3^{k_3}x_7^{k_4}x_8^{k_5}x_9^{k_6})=
\y(x_1^{k_1}x_2^{k_2}x_3^{k_3})\y(x_7^{k_4}x_8^{k_5}x_9^{k_6}) \quad \text{for } k_i\in\{0,1\}, \\
&x_j^2=1 \text{ and }\y(x_j)=0 \quad \text{for }
j\in \{0,\dots,9\}, \\
&\y(x_ix_{j+3})=\y(x_{i+3}x_{j+6})=0\quad \text{for } i,j\in\{1,2,3\},\ i\neq j, \\
&\y(x_ix_{j+3}x_{k+6})=0\quad \text{for }i,j,k\in\{1,2,3\},\ |\{i,j,k\}|\le2.
\end{split}
\end{equation}
Corollary \ref{c:hier} provides a converging sequence of upper bounds for the solution of \eqref{e:tgb_opt}. For $r=3$, one obtains the upper bound 4.0 by solving an SDP with 31017 indeterminates and the semidefinite constraint of size 263, or more practically, by solving its dual with 4549 indeterminates and the semidefinite constraint of size 325. The value of \eqref{e:tgb} subject to \eqref{e:biloc} and \eqref{e:Z=0} is thus at most 4.0.
Next, we show that the value 4 is attained; incidentally, the below construction was inspired by the numerical output of the dual SDP.
Denote
$$\eta_0=(\phantom{.}1\ \phantom{.}1\ \phantom{.}1\ \phantom{.}1),
\quad \eta_1=(\phantom{.}1\ \phantom{.}1\ -1\ -1),
\quad \eta_2=(\phantom{.}1\ -1\ \phantom{.}1\ -1),
\quad \eta_3=(\phantom{.}1\ -1\ -1\ \phantom{.}1),
$$
and let $e_i\in \R^4$ be the $i$\textsuperscript{th} standard unit vector.
Endow $\{1,2,3,4\}^2$ with the uniform probability distribution, and consider the following binary random variables on it:
$$
A_i=\eta_0\otimes \eta_i,\qquad
B_i=\left(\eta_0\otimes \eta_0-2\sum_{k=1}^4 e_k\otimes e_k\right)\cdot \eta_i\otimes \eta_0,\qquad
C_i=\eta_i\otimes \eta_0,
$$
for $i\in\{1,2,3\}$.
Here, we identified the algebra of random variables on $\{1,2,3,4\}^2$ with $\R^4\otimes \R^4$.
The bilocality constraints \eqref{e:biloc} are satisfied because of the tensor structure of $A_i,C_i$ (and the uniform distribution on a product is the product of uniform distributions), and the vanishing constraints \eqref{e:Z=0} follow by direct calculation. Finally, \eqref{e:tgb} evaluates to 4 for this ensemble of binary random variables.

Lastly, let us provide an analytic proof that the value of \eqref{e:tgb} subject to \eqref{e:biloc} and \eqref{e:Z=0} is at most 4.
The sub-cases in the proof were identified by looking at the numerical output of the aforementioned dual SDP for $r=3$.

\begin{proposition}\label{p:exact}
If binary random variables $A_i,B_i,C_i$ for $i=1,2,3$ satisfy constraints \eqref{e:biloc} and \eqref{e:Z=0}, then
$$
\frac13 \sum_{i\in\{1,2,3\} } \Big(\bE(B_iC_i)-\bE(A_iB_i)\Big)-\sum_{\{i,j,k\}=\{1,2,3\} } \bE(A_iB_jC_k)\le 4.
$$
\end{proposition}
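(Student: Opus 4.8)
\emph{The plan.} The idea is to condition on the two ``outer'' systems $\underline A=(A_1,A_2,A_3)$ and $\underline C=(C_1,C_2,C_3)$, which turns the assertion into an upper bound for a finite linear program, and then to plug in the certifying dual feasible point read off from the numerical solution of the level‑$3$ SDP. Since all of the $A_i,B_i,C_i$ are $\{-1,1\}$‑valued, $(\underline A,\underline C)$ takes values in the finite set $\{-1,1\}^3\times\{-1,1\}^3$; let $P$ be its joint law and, for $j=1,2,3$, put $\beta_j(a,c):=\bE[B_j\mid \underline A=a,\underline C=c]\in[-1,1]$. Grouping the correlators in the claimed inequality by the $B$‑index, its left‑hand side equals $\bE\bigl[\sum_{j=1}^3\beta_j(\underline A,\underline C)\,\phi_j(\underline A,\underline C)\bigr]$, where
\[
\phi_j(a,c)=\tfrac13(c_j-a_j)-a_{j_1}c_{j_2}-a_{j_2}c_{j_1},\qquad \{j_1,j_2\}=\{1,2,3\}\setminus\{j\}.
\]
The bilocality constraints \eqref{e:biloc} say exactly that all mixed moments of the $A$'s and of the $C$'s factor, i.e.\ $P=P_A\otimes P_C$; together with \eqref{e:Z=0} this gives $\bE[a_i]=\bE[c_i]=0$ and $\bE[a_ic_k]=0$ for all $i,k$.

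\emph{Orthogonality relations and the dual certificate.} Reading the remaining constraints of \eqref{e:Z=0} through the conditioning shows that each $\beta_j$ is $P$‑orthogonal to an explicit family $V_j$ of monomials in $a,c$: $\bE(B_j)=0$ gives $\bE[\beta_j]=0$; $\bE(A_iB_j)=\bE(B_iC_j)=0$ for $i\ne j$ give $\bE[a_i\beta_j]=\bE[c_i\beta_j]=0$ for $i\ne j$; and $\bE(A_iB_jC_k)=0$ for $|\{i,j,k\}|\le2$ gives $\bE[a_ic_k\beta_j]=0$ whenever $i,j,k$ are not pairwise distinct. Hence, for any $\psi_j$ in the real linear span of $V_j$ — a choice of $\psi_j$ fixed once and for all, independently of the probability space — one has $\bE[\beta_j\psi_j]=0$, so the objective equals $\sum_j\bE[\beta_j(\phi_j-\psi_j)]$ and, since $|\beta_j|\le1$, is bounded above by $\bE\bigl[\sum_{j=1}^3|\phi_j-\psi_j|\bigr]$. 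It then remains to plug in the explicit $\psi_j$ whose coefficients are extracted from the numerical dual of the level‑$3$ SDP and to check $\bE\bigl[\sum_j|\phi_j-\psi_j|\bigr]\le4$; this is done by partitioning $\{-1,1\}^3\times\{-1,1\}^3$ into the handful of sign‑pattern regions suggested by the numerical solution — these are the ``sub‑cases'' — and bounding the integrand on each, invoking $\bE[a_i]=\bE[c_i]=\bE[a_ic_k]=0$ on the regions where a pointwise estimate would fall short of the bound.

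\emph{Main obstacle.} The conditioning and the translation of the constraints are bookkeeping; the content is the last step. The crude quantity $\sum_j|\phi_j-\psi_j|$ is as large as $8$ at some points of $\{-1,1\}^6$, so no single inequality for the integrand can yield the bound $4$: it emerges only once the $\psi_j$‑subtraction is coupled with the product (bilocal) structure of $P$ and organized into the right case split. Identifying a suitable $\psi_j\in\operatorname{span}V_j$ together with a partition of $\{-1,1\}^6$ for which every piece can be estimated — which is exactly where the numerical SDP is indispensable, since these data are guessed from it rather than written down a priori — is the real difficulty; once they are in hand, verifying the finitely many resulting numerical inequalities is routine, if somewhat lengthy.
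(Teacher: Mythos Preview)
Your proposal is a strategy, not a proof: the step you yourself flag as ``the real difficulty''---exhibiting the concrete $\psi_j\in\operatorname{span}V_j$ and the partition of $\{-1,1\}^6$, and then verifying the resulting inequalities---is never carried out. The conditioning and the translation of the constraints are correct, and the LP-duality framework is sound in principle; but since you acknowledge that $\sum_j|\phi_j-\psi_j|$ reaches $8$ pointwise, the bound $4$ cannot follow until the explicit data are written down and checked. Without them there is no proof, only a roadmap.

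The paper's argument is much shorter and takes a different route. It does not condition on $(\underline A,\underline C)$ or pass to an LP; instead it splits the nine-term objective into six pieces and bounds each by $1$ via an explicit identity. For the three diagonal pairs,
\[
1-\bE(B_iC_i)+\bE(A_iB_i)=\bE\big[(1-B_iC_i)(1+A_iB_i)\big]\ge0,
\]
using only $B_i^2=1$ and $\bE(A_iC_i)=\bE(A_i)\bE(C_i)=0$. For each $j$ and $\{i,k\}=\{1,2,3\}\setminus\{j\}$,
\[
1+\bE(A_iB_jC_k)+\bE(A_kB_jC_i)
=\bE\Big[\big(\tfrac12\big((A_iA_k+C_iC_k)B_j+A_iC_i+A_kC_k\big)-\bE(A_iA_k)B_j\big)^2\Big],
\]
an equality verified by expansion and the constraints \eqref{e:biloc}, \eqref{e:Z=0}. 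Summing gives $\tfrac13\cdot3+3=4$. These two identities \emph{are} the ``sub-cases'' that were read off the SDP dual; once written, they make the proof self-contained with no residual case analysis over $\{-1,1\}^6$ and no appeal to the product structure of $P$ beyond $\bE(A_iC_i)=0$. Your conditioning approach could presumably be completed, but it would be longer and would still need the explicit certificate you have not supplied.
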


\begin{proof}
It suffices to see that
\begin{equation}\label{e:exact1}
\bE(B_iC_i)-\bE(A_iB_i)\le 1
\end{equation}
for $i=1,2,3$, and
\begin{equation}\label{e:exact2}
-\bE(A_iB_jC_k)-\bE(A_kB_jC_i)\le 1
\end{equation}
for $j=1,2,3$ and $\{i,k\}=\{1,2,3\}\setminus \{j\}$.
First, \eqref{e:exact1} follows from
$$1-\bE(B_iC_i)+\bE(A_iB_i)=\bE\Big(
(1-B_iC_i)(1+A_iB_i)
\Big)\ge0;$$
here, the equality holds by $B_i^2=1$ and $\bE(A_iC_i)=\bE(A_i)\bE(C_i)=0$, and the inequality holds since $1-B_iC_i$ and $1+A_iB_i$ are nonnegative random variables.
Second, \eqref{e:exact2} follows from the equality
$$
\resizebox{.99\textwidth}{!}{
$1+\bE(A_iB_jC_k)+\bE(A_kB_jC_i)
=\bE\left(
\Big(
\tfrac12 \Big((A_iA_k+C_iC_k)B_j+A_iC_i+A_kC_k\big)
-\bE(A_iA_k)B_j
\Big)^2
\right),$
}
$$
which is verified by expanding the right-hand side, and applying the relations \eqref{e:biloc} and \eqref{e:Z=0}.
\end{proof}

\section{Lasserre's perturbative Positivstellensatz for moment polynomials}
\label{sec:lass}

This section addresses moment polynomial positivity on general moment semialgebraic sets (without the archimedean assumption from Section \ref{sec:arch}). We show that moment polynomials nonnegative on $\bcK(S_1,S_2)\times K(S_1)$ belong to the quadratic module $\QM{S_1,S_2}$ \emph{up to an arbitrarily small perturbation} of their coefficients (Theorem \ref{t:lass}). This is achieved through the analysis of a sequence of conic optimization problems and their duals, and a sufficient condition for partially estimating a positive functional on $\MP$ with a moment evaluation.
Finally, a corollary for polynomial positivity on semialgebraic sets is given (Corollary \ref{c:LN}).

The aforementioned perturbations arise from polynomials
$$\Phi_r=\sum_{j=1}^n\sum_{k=0}^r \frac{x_j^{2k}}{k!}$$
and pure moment polynomials
$$\Psi_r=\sum_{j=1}^n\sum_{\substack{k,\ell\in\N,\\ k\ell\le r}} \frac{\y(x_j^{2k})^\ell}{(k!)^\ell \ell!}$$
for $r\in\N$.
We start with a crude estimate pertaining to the sequence $(\Psi_r)_r$.

\begin{lemma}\label{l:expexp}
For all $y\ge0$,
$$\sum_{k,\ell\in\N}\frac{y^{k\ell}}{(k!)^\ell\ell!}<\infty.$$
In particular, $\sum_{k,\ell\in\N}\frac{1}{(k!)^\ell\ell!}\le e^2$.
\end{lemma}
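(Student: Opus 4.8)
The plan is to prove the finiteness of $\sum_{k,\ell\in\N}\frac{y^{k\ell}}{(k!)^\ell\ell!}$ by first summing over $\ell$ for fixed $k$, and then summing the resulting quantity over $k$. First I would fix $k\in\N$ and observe that $\sum_{\ell\in\N}\frac{y^{k\ell}}{(k!)^\ell\ell!}=\sum_{\ell\in\N}\frac{(y^k/k!)^\ell}{\ell!}\le \sum_{\ell\in\N_0}\frac{(y^k/k!)^\ell}{\ell!}=\exp\!\big(y^k/k!\big)$, recognizing the inner sum (after adding the $\ell=0$ term) as the power series of the exponential function evaluated at $y^k/k!\ge0$. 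Then the double sum is bounded by $\sum_{k\in\N}\exp(y^k/k!)$.

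Next I would show that $\sum_{k\in\N}\exp(y^k/k!)<\infty$. The key point is that $y^k/k!\to 0$ as $k\to\infty$ for any fixed $y\ge0$ (this is the standard fact that the terms of the exponential series tend to zero), so there is $k_0\in\N$ with $y^k/k!\le 1$, hence $\exp(y^k/k!)\le e$, for all $k\ge k_0$. Actually, to get summability one wants more: I would use that for $k\ge k_0$ one has $y^k/k!\le 1$, so $\exp(y^k/k!)\le 1+ (e-1)\,y^k/k!$ by convexity of $t\mapsto e^t$ on $[0,1]$ (the chord bound $e^t\le 1+(e-1)t$ for $t\in[0,1]$), and therefore $\sum_{k\ge k_0}\exp(y^k/k!)\le \sum_{k\ge k_0}\big(1+(e-1)y^k/k!\big)$ — but the $\sum 1$ diverges, so this particular bound is too weak. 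The cleaner route: since $y^k/k!$ decays faster than geometrically, fix $k_1$ with $y^{k}/k!\le 2^{-k}$ for $k\ge k_1$ (possible because $y^{k+1}/(k+1)!\big/\big(y^k/k!\big)=y/(k+1)\to 0$, so eventually the ratio is below $1/2$ and the tail is dominated by a geometric series with ratio $1/2$, giving $y^k/k!\le C 2^{-k}$; absorbing constants one gets $y^k/k!\le 2^{-k}$ for $k$ large). Then $\exp(y^k/k!)\le \exp(2^{-k})\le 1+2\cdot 2^{-k}$ for $k\ge k_1$ — still the $\sum 1$ problem. So the honest argument is simply: $\exp(y^k/k!)$ is a bounded sequence that does \emph{not} tend to zero, so $\sum_k \exp(y^k/k!)$ diverges. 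Hence the decomposition "sum over $\ell$ first" does not by itself work, and I would instead sum over $k$ first.

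So the corrected plan: fix $\ell\in\N$ and bound $\sum_{k\in\N}\frac{y^{k\ell}}{(k!)^\ell\ell!}=\frac{1}{\ell!}\sum_{k\in\N}\Big(\frac{y^k}{k!}\Big)^\ell$. Since $y^k/k!\le e^y$ for all $k$ (each term of the exponential series is at most the sum), we have $\big(y^k/k!\big)^\ell\le (e^y)^{\ell-1}\cdot\big(y^k/k!\big)$, whence $\sum_{k\in\N}\big(y^k/k!\big)^\ell\le e^{y(\ell-1)}\sum_{k\in\N}\frac{y^k}{k!}\le e^{y\ell}$. Therefore $\sum_{k,\ell\in\N}\frac{y^{k\ell}}{(k!)^\ell\ell!}\le\sum_{\ell\in\N}\frac{e^{y\ell}}{\ell!}\le\exp(e^y)<\infty$, where again the last step adds the $\ell=0$ term and recognizes the exponential series at $e^y$. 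This proves the first claim. For the "in particular" part, set $y=1$: then $y^k/k!\le 1$ for all $k\in\N$ (since $k!\ge 1$), so $\big(y^k/k!\big)^\ell\le y^k/k!$ directly, giving $\sum_{k\in\N}\big(1/k!\big)^\ell\le \sum_{k\in\N}\frac{1}{k!}=e-1\le e$, and hence $\sum_{k,\ell\in\N}\frac{1}{(k!)^\ell\ell!}\le\sum_{\ell\in\N}\frac{e}{\ell!}=e(e-1)\le e^2$.

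The main obstacle, as the above shows, is choosing the right order of summation and the right crude bound: summing the geometric-type series in $\ell$ first is the natural impulse but leads to $\sum_k e^{y^k/k!}$ which diverges, so one must instead exploit the bound $y^k/k!\le e^y$ (uniform in $k$) to pull out a factor and reduce the $\ell$-th power sum over $k$ back to a single exponential series, after which summing over $\ell$ is immediate. Everything else is routine manipulation of the exponential power series and monotonicity of positive terms; no convergence subtlety beyond term-by-term comparison with $\sum 1/m!=e$ is needed, and Tonelli/rearrangement is justified throughout since all terms are nonnegative.
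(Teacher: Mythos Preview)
Your final argument is correct and takes a genuinely different route from the paper. The paper also begins by summing over $\ell$ first, but it keeps the \emph{exact} value
\[
\sum_{\ell\ge1}\frac{1}{\ell!}\Big(\frac{y^k}{k!}\Big)^{\!\ell}=\exp\!\Big(\frac{y^k}{k!}\Big)-1
\]
rather than throwing in the $\ell=0$ term and bounding by $\exp(y^k/k!)$. That $-1$ is precisely what saves the $\ell$-first approach: the paper then uses $e^t-1\le te^t$ to get $\sum_k\frac{y^k}{k!}\exp(y^k/k!)$, splits at a $k_0$ with $y^k/k!\le1$, and bounds the tail by $e\cdot e^y$. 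So your false start was only one observation away from the paper's proof.

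Your actual proof instead sums over $k$ first, using the uniform bound $y^k/k!\le e^y$ to write $(y^k/k!)^\ell\le e^{y(\ell-1)}\cdot y^k/k!$, hence $\sum_k(y^k/k!)^\ell\le e^{y\ell}$, and finally $\sum_\ell e^{y\ell}/\ell!\le\exp(e^y)$. This is cleaner (no case split), at the price of a cruder bound ($\exp(e^y)$ versus roughly $e\cdot e^y$)---irrelevant here, since only finiteness is claimed. For $y=1$ your bound $e(e-1)$ is actually slightly sharper than the paper's $e^2$.
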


\begin{proof}
Since $\exp(t)-1\le t \exp(t)$ for $t\ge0$,
\begin{equation}\label{e:expexp}
\sum_{k,\ell\in\N}\frac{y^{k\ell}}{(k!)^\ell\ell!}
=\sum_{k=1}^\infty\sum_{\ell=1}^\infty
\frac{1}{\ell!}\left(\frac{y^k}{k!}\right)^\ell
=\sum_{k=1}^\infty\left(
\exp\left(\frac{y^k}{k!}\right)-1\right) \le \sum_{k=1}^\infty
\frac{y^k}{k!}\exp\left(\frac{y^k}{k!}\right).
\end{equation}
Since $\lim_{k\to\infty} \sqrt[k]{k!}=\infty$, there exists $k_0\in\N$ such that $\frac{y^k}{k!}\le 1$ for all $k\ge k_0$ (in particular, if $y=1$ one can take $k_0=1$). Then we estimate  \eqref{e:expexp} as
\begin{align*}
\sum_{k=1}^\infty
\frac{y^k}{k!}\exp\left(\frac{y^k}{k!}\right)
&=\sum_{k=1}^{k_0-1}
\frac{y^k}{k!}\exp\left(\frac{y^k}{k!}\right)
+\sum_{k=k_0}^{\infty}
\frac{y^k}{k!}\exp\left(\frac{y^k}{k!}\right) \\
&\le \sum_{k=1}^{k_0-1}
\frac{y^k}{k!}\exp\left(\frac{y^k}{k!}\right)
+\sum_{k=k_0}^{\infty}
\frac{y^k}{k!}\cdot e \\
&\le \sum_{k=1}^{k_0-1}
\frac{y^k}{k!}\exp\left(\frac{y^k}{k!}\right)
+e \cdot \exp(y)<\infty.
\end{align*}
If $y=1$, the first sum above is empty, so \eqref{e:expexp} is bounded by $e^2$.
\end{proof}

If $\uX\in\R^n$, then $n\le\Phi_r(\uX)\le\sum_{j=1}^n\exp(X_j^2)$ is uniformly bounded for all $r\in\N$. Similarly, if $\nu\in\prob{\R^n}$ is finitely supported, i.e., $\nu=\sum_{i=1}^m \alpha_i \delta_{\uX_i}$ is a convex combination of the Dirac delta measures $\delta_{\uX_i}$ concentrated at $\uX_i\in \R^n$, then setting 
$y_j:=X_{1j}^2+\cdots+X_{mj}^2$ gives
$$
0\le \Psi_r(\nu)
=\sum_{j=1}^n\sum_{\substack{k,\ell\in\N,\\ k\ell\le r}} \frac{\left(
\sum_{i=1}^m \alpha_i X_{ij}^{2k}
\right)^\ell}{(k!)^\ell \ell!}
\le \sum_{j=1}^n\sum_{\substack{k,\ell\in\N,\\ k\ell\le r}} \frac{y_j^{k\ell}}{(k!)^\ell \ell!}
\le \sum_{j=1}^n\sum_{k,\ell\in\N} \frac{y_j^{k\ell}}{(k!)^\ell \ell!}
<\infty
$$
by Lemma \ref{l:expexp},
so $\Psi_r(\nu)$ is uniformly bounded for all $r\in\N$.

Let $S_1\subseteq \px$, $S_2\subseteq \mp$ and $f\in \MP$. In the proof of Theorem \ref{t:lass}, the following pair of optimization problems for every $r\ge \frac{\deg f}{2}$ and $M>0$ plays an important role:
\begin{equation}
\label{eq:Q_rm}
Q_{r,M}: \begin{cases}
\begin{aligned}
\sup_{z\in\R} \quad  & z  \\	
\emph{s.t.}
\quad & f-z \in \QM{S_1,S_2}_{2r} 
+ \R_{\geq0} \cdot\big(M-\Phi_r-\Psi_r\big)
\,;
\end{aligned}
\end{cases}
\end{equation}
\begin{equation}
\label{eq:Q*_rm}
Q^\vee_{r,M}: \begin{cases}
\begin{aligned}
\inf_{L\in\MP^\vee_{2r}} \quad  & L(f)  \\	
\emph{s.t.}
\quad & L(1)=1
 \,, \\
\quad & L\big(M-\Phi_r-\Psi_r\big)\geq0
 \,, \\
\quad & L(g)\geq0\quad \text{for all } g\in\QM{S_1,S_2}_{2r}\,.
\end{aligned}
\end{cases}
\end{equation}
Here, $\R_{\geq0} \cdot g$ for $g\in\MP$ denotes the set of all nonnegative multiples of $g$.
In the following two lemmas and their proofs we abbreviate 
$\cC_{r,M}=\QM{S_1,S_2}_{2r} + \R_{\geq0}\cdot (M-\Phi_r-\Psi_r)$.
The next lemma provides information on the closure of $\cC_{r,M}$.

\begin{lemma}\label{l:closed}
For all $r\in\N$ and $M>0$, the closure of the cone $\MP_r\cap\cC_{2^r,M}$ in $\MP_r$ is contained in
\begin{equation}\label{e:closure}
\left\{g\in\MP_r\colon g+\ve\in \cC_{2^r,M} \text{ for all }\ve>0\right\}.
\end{equation}
\end{lemma}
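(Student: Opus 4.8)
\emph{Plan.} The plan is to deduce the lemma from an elementary fact about convex cones in a finite-dimensional real vector space $V$: if $C\subseteq V$ is a convex cone and $p$ lies in the interior of $C$, then $\overline C+\R_{>0}\,p\subseteq C$. (Indeed, for $q\in\overline C$ the half-open segment $[p,q)$ lies in the interior of $C$, so $\lambda p+(1-\lambda)q\in C$ for $\lambda\in(0,1]$; rescaling by $\tfrac1{1-\lambda}$, which is legitimate since $C$ is a cone, gives $q+\tfrac{\lambda}{1-\lambda}p\in C$, and $\tfrac{\lambda}{1-\lambda}$ ranges over all of $(0,\infty)$.) I would apply this with $V=\MP_r$, with $C=\MP_r\cap\cC_{2^r,M}$, and with $p=\bfone$ the constant polynomial $1$. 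Since $\bfone\in\QM{S_1,S_2}_{2\cdot2^r}\subseteq\cC_{2^r,M}$ we have $\bfone\in C$, and since $g\in\MP_r$ forces $g+\ve\in\MP_r$, the membership $g+\ve\in\cC_{2^r,M}$ is equivalent to $g+\ve\in C$. So the whole task reduces to showing that $\bfone$ is an interior point of $C$ in $\MP_r$, i.e.\ that $\bfone+tv\in\cC_{2^r,M}$ for each $v\in\MP_r$ and all sufficiently small $t>0$.

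First I would record two ingredients. (1) $\Phi_r-n\in\QM{\emptyset,\emptyset}_{2r}$ (it is a sum of scaled squares of the monomials $x_j^k$) and $\Psi_r\in\QM{\emptyset,\emptyset}_{2r}$, since each summand $\tfrac{\y(x_j^{2k})^\ell}{(k!)^\ell\ell!}$ of $\Psi_r$ is of the form \eqref{e:gens2}: it equals $\bigl(\y(x_j^{2k})^{\ell/2}\bigr)^2$ for even $\ell$ and $\bigl(\y(x_j^{2k})^{(\ell-1)/2}\bigr)^2\,\y\bigl((x_j^k)^2\bigr)$ for odd $\ell$, of degree $2k\ell\le2r$ in either case. (2) A \emph{domination estimate}: for every monomial $m$ of $\MP$ with $\deg m\le r$ there is some $\Lambda>0$ with
$$\Lambda\bigl(\Phi_{2^r}+\Psi_{2^r}\bigr)-m^2\ \in\ \QM{\emptyset,\emptyset}_{2\cdot2^r}\,.$$
To prove (2) I would split $m^2$ into its $\px$-part and its moment part. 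The $\px$-part is a product of at most $r$ squares $x_i^2$; padding their number to a power of two $2^s$ by inserting factors $1$ and using the iterated arithmetic--geometric mean inequality (which for a power-of-two number of nonnegative summands is literally a sum of squares) bounds it by a constant plus a multiple of $\sum_j x_j^{2^{s+1}}$, whose terms are summands of $\Phi_{2^r}$ because $2^s\le2^r$. For the moment part I would use the symbolic H\"older inequality of Lemma \ref{l:holder} to replace powers $\y_\alpha^{2k}$ by $\y_{2k\alpha}$ modulo $\qm{\emptyset,\emptyset}$, and then bound $\y$ of an $x$-monomial by a multiple of $\sum_j\y(x_j^{2k})$ via the same AM--GM trick applied \emph{inside} $\y$ (so that the defect is $\y$ of a sum of squares and hence lies in $\qm{\emptyset,\emptyset}$), which lands among the summands of $\Psi_{2^r}$. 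The generous gap between $r$ and $2^r$ is exactly what keeps all these estimates within degree $2\cdot2^r$; the exponent bookkeeping is routine.

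With these in place, for fixed $v\in\MP_r$ I would write $v=a+\sum_m c_m\,m$ with $a$ constant and $m$ ranging over non-constant monomials of degree $\le r$, pick for each $m$ a factorization $m=m_1m_2$ with $\deg m_i\le r$, and complete the square symmetrically,
$$t\,c_m\,m=\Bigl(\sqrt{\tfrac{|t c_m|}{2}}\,(m_1\pm m_2)\Bigr)^2-\tfrac{|t c_m|}{2}\,m_1^2-\tfrac{|t c_m|}{2}\,m_2^2\,,$$
with the sign matching that of $c_m$. The square lies in $\QM{\emptyset,\emptyset}_{2\cdot2^r}$, and the domination estimate rewrites each $-\tfrac{|t c_m|}{2}m_i^2$ as a nonnegative multiple of $M-\Phi_{2^r}-\Psi_{2^r}$ plus an element of $\QM{\emptyset,\emptyset}_{2\cdot2^r}$ minus a constant of size $O(t)$. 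Summing over $m$, the polynomial $\bfone+tv$ becomes an element of $\QM{\emptyset,\emptyset}_{2\cdot2^r}+\R_{\ge0}\cdot(M-\Phi_{2^r}-\Psi_{2^r})$ plus a residual constant $1+O(t)$, which is nonnegative for $t$ small and hence also lies in $\QM{\emptyset,\emptyset}_{2\cdot2^r}$. As $\QM{\emptyset,\emptyset}_{2\cdot2^r}\subseteq\QM{S_1,S_2}_{2\cdot2^r}$, this gives $\bfone+tv\in\cC_{2^r,M}$, so $\bfone$ is interior to $C$, as needed.

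I expect the domination estimate in (2) to be the main obstacle: the arithmetic--geometric mean bounds must be forced to live inside the quadratic module, not merely hold pointwise, and for the moment variables this has to be channelled through Lemma \ref{l:holder} and a careful reduction of products and powers of moment symbols to single-variable even moments, together with the degree accounting that justifies truncating at $2^r$ rather than at $r$.
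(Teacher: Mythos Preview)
Your proposal is correct and follows essentially the same strategy as the paper: both reduce to showing that the constant $\bfone$ is an interior point of $\MP_r\cap\cC_{2^r,M}$, and then deduce the closure statement. The paper phrases the final step via the super-additive function $F(g)=\sup\{z:g-z\in\cC_{2^r,M}\}$ and the bound $F(g)\ge -A\|g\|$; your convex-geometric fact $\overline{C}+\R_{>0}\,p\subseteq C$ for $p$ interior is an equivalent packaging.

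The one technical difference is in how interiority of $\bfone$ is established. Instead of your AM--GM domination, the paper uses the recursive polarization identities
\[
\pm 2x_j^k v=(x_j^k\pm v)^2-x_j^{2k}-v^2,\qquad
\pm 2\y_\alpha^k v=(\y_\alpha^k\pm v)^2+(\y_{2k\alpha}-\y_\alpha^{2k})-\y_{2k\alpha}-v^2,
\]
and the analogous identity inside $\y$, peeling off one factor (full power) of a monomial $u\in\MP_r$ at a time. This directly yields $\pm u=(\text{element of }\QM{\emptyset,\emptyset})-(\text{conic combination of }x_j^{2k},\,\y(x_j^{2k}))$ with transparent degree bookkeeping (the number of blocks, at most $r$, governs the iterated squaring, whence the bound $2^r$). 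Your AM--GM route combined with Lemma~\ref{l:holder} is a close cousin and also works. One point in your sketch of (2) deserves more care: when the moment part of $m$ is a \emph{product} of several distinct symbols $\y_{\alpha_1}\cdots\y_{\alpha_p}$, a single application of Lemma~\ref{l:holder} followed by AM--GM inside $\y$ is not enough; you first need an AM--GM step on the product of the $\y$'s themselves (to reduce to powers $\y_\alpha^{2k}$) before invoking Lemma~\ref{l:holder}. Your final paragraph hints at this, but the body of the sketch treats the moment part as if it were a single power.
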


\begin{proof}
Let us fix $r\in\N$, $M>0$ and 
let us endow the finite-dimensional space $\MP_r$ with some norm $\|\cdot\|$. 
Observe the identities
\begin{equation}\label{e:recursive}
\begin{split}
\pm 2x_j^k v&=(x_j^k\pm v)^2- x_j^{2k}- v^2,\\
\pm 2\y_{i_1,\dots,i_n}^k v&=(\y_{i_1,\dots,i_n}^k\pm v)^2
+\left(\y_{2ki_1,\dots,2ki_n}-\y_{i_1,\dots,i_n}^{2k}\right)
-\y_{2ki_1,\dots,2ki_n}
-v^2,\\
\pm 2\y(x_j^kv)&=\y\left((x_j^k\pm v)^2\right)- \y(x_j^{2k})- \y(v^2),
\end{split}
\end{equation}
for all $v\in\MP$, and note that $\y_{2ki_1,\dots,2ki_n}-\y_{i_1,\dots,i_n}^{2k}\in\qm{\emptyset,\emptyset}$ by Lemma \ref{l:holder}(2).
Let $u\in\MP_r$ be an arbitrary monomial. Applying \eqref{e:recursive} recursively to factors of $u$ shows that $\pm u$ equals an element of $\QM{\emptyset,\emptyset}_{2^r}$ minus a conic combination of $x_j^{2k},\y(x_j^{2k})$ for $k\le 2^r$.
Here, the bound $2^r$ arises from $\deg u\le r$ and squaring on the right-hand side of \eqref{e:recursive} when extracting factors of $u$ (cf. Lemma \ref{l:adhoc} below).
Thus there exists $A_u>0$ such that
$$\pm u+A_u \in \QM{\emptyset,\emptyset}_{2^r}+\R_{\ge0}\cdot\big(M-\Phi_{2^r}-\Psi_{2^r})\big).$$
Since $\MP_r$ is finite-dimensional, there exists $A>0$ (dependent on $r$ and $M$) such that
\begin{equation}\label{e:constA}
g+A\|g\| \in \QM{\emptyset,\emptyset}_{2^r}+\R_{\ge0}\cdot\big(M-\Phi_{2^r}-\Psi_{2^r})\big)
\end{equation}
for all $g\in\MP_r$. 
Now define $F:\MP_r\to [-\infty,\infty]$ as
$$F(g) = \sup\left\{z\in\R\colon g-z\in \cC_{2^r,M}
\right\}.$$
This function satisfies the following properties:
\begin{enumerate}[\rm (a)]
\item $F(g)\ge -A\|g\|$ for all $g\in\MP_r$;
\item $F(g_1+g_2)\ge F(g_1)+F(g_2)$ for all $g_i\in\MP_r$;
\item $F(g)\ge0$ if and only if $g$ belongs to \eqref{e:closure}.    
\end{enumerate}
Here, (a) follows from \eqref{e:constA} because $\QM{\emptyset,\emptyset}_{2^r}+\R_{\ge0}\cdot\big(M-\Phi_{2^r}-\Psi_{2^r})\big)$ is contained in $\cC_{2^r,M}$; (b) holds since $g_1-z_1,g_2-z_2 \in\cC_{2^r,M}$ implies $g_1+g_2-(z_1+z_2)\in\cC_{2^r,M}$; and (c) follows directly by definitions of $F$ and \eqref{e:closure}.
Now suppose $(g_i)_i$ is a sequence in $\MP_r\cap\cC_{2^r,M}$ that converges to $g\in\MP_r$. Then
$$F(g) \ge F(g_i)+F(g-g_i)\ge -A\|g-g_i\|$$
for all $i$, and so $F(g)\ge0$. Therefore, $g$ belongs to \eqref{e:closure}.
\end{proof}

\begin{remark}
Note that $\cC_{r,M}$ is not closed in general. Indeed, following \cite[Remark 2.8]{powers01} let $S_1=\{-x_1^2\}$; then $x_1+\ve\in \QM{S_1,\emptyset}=\cC_{1,1}$ for all $\ve>0$ but $x_1\notin \cC_{1,1}$.
\end{remark}

One particular aspect of the proof of Lemma \ref{l:closed} is reused later, so we record it in detail next.

\begin{lemma}\label{l:adhoc}
For all $i_1,\dots,i_n\in\N_0$,
$$
\sum_{j=1}^{n-1}\frac{1}{2^j}\y(x_j^{2^{j+1} i_j})
+\frac{1}{2^{n-1}}\y(x_n^{2^n i_n})
-\y_{i_1,\dots, i_n}^2 \in\qm{\emptyset,\emptyset}.
$$
\end{lemma}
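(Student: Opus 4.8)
Write $g\preceq h$ for $g,h\in\mp$ to mean $h-g\in\qm{\emptyset,\emptyset}$. The plan is to reduce the statement to two elementary facts. The first is that for any monomial $u\in[\ux]$ we have $\y(u)^2\preceq\y(u^2)$: this is Lemma~\ref{l:holder}(2) with $k=1$, and it can also be seen directly from $\mp$-linearity of $\y$, since $\y\big((u-\y(u))^2\big)=\y(u^2)-2\y(u)^2+\y(u)^2=\y(u^2)-\y(u)^2$. The second is the ``polarization inequality'' $\y(ab)\preceq\tfrac12\y(a^2)+\tfrac12\y(b^2)$ for all $a,b\in\MP$, which follows from $2ab=a^2+b^2-(a-b)^2$ upon applying $\y$, because $\y\big((a-b)^2\big)$ is a generator of $\qm{\emptyset,\emptyset}$; this is precisely (a rearrangement of) the middle identity in \eqref{e:recursive}.

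With $u=x_1^{i_1}\cdots x_n^{i_n}$ we have $\y_{i_1,\dots,i_n}^2=\y(u)^2\preceq\y(u^2)=\y(x_1^{2i_1}\cdots x_n^{2i_n})$, so it remains to bound $\y(x_1^{2i_1}\cdots x_n^{2i_n})$ above (modulo $\qm{\emptyset,\emptyset}$) by the asserted sum. I would do this by peeling off one variable at a time: apply the polarization inequality with $a=x_1^{2i_1}$ and $b=x_2^{2i_2}\cdots x_n^{2i_n}$ to get $\y(x_1^{2i_1}\cdots x_n^{2i_n})\preceq\tfrac12\y(x_1^{4i_1})+\tfrac12\y(x_2^{4i_2}\cdots x_n^{4i_n})$, then repeat on the remaining tail $\y(x_2^{4i_2}\cdots x_n^{4i_n})$ with $a=x_2^{4i_2}$, and so on. Each peeling step halves the accumulated coefficient and doubles every exponent still present in the tail, and the tail exponents start at $2i_j$; hence after the $j$-th step the variable $x_j$ contributes the term $\tfrac1{2^j}\y\big(x_j^{2^{j+1}i_j}\big)$ and the tail is $\tfrac1{2^j}\y\big(x_{j+1}^{2^{j+1}i_{j+1}}\cdots x_n^{2^{j+1}i_n}\big)$. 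After $n-1$ steps the tail has become $\tfrac1{2^{n-1}}\y\big(x_n^{2^n i_n}\big)$, which matches the final summand in the claim. Equivalently, one can phrase this as an induction on $n$, with base case $n=1$ given directly by $\y(x_1^{2i_1})-\y_{i_1}^2=\y\big((x_1^{i_1}-\y_{i_1})^2\big)\in\qm{\emptyset,\emptyset}$, and inductive step consisting of one application of the polarization inequality followed by the hypothesis applied to the tail monomial with doubled exponents.

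There is no substantive obstacle here beyond bookkeeping: everything rests on the two elementary facts above, both already implicit in the excerpt. The only point requiring care is checking that the coefficients $\tfrac1{2^j}$ and exponents $2^{j+1}i_j$ produced by the recursion agree with the statement, which they do for the reasons noted (each step halves the running coefficient and doubles the surviving exponents, starting from $2i_j$), and that the degenerate cases $i_j=0$ cause no trouble since then $x_j^{0}=1$ and $\y(1)=1$ fit the same pattern.
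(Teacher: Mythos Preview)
Your proposal is correct and follows essentially the same route as the paper's proof: both first reduce $\y_{i_1,\dots,i_n}^2\preceq\y_{2i_1,\dots,2i_n}$ via Lemma~\ref{l:holder}(2), and then peel off one variable at a time using the polarization identity $\y(ab)\preceq\tfrac12\y(a^2)+\tfrac12\y(b^2)$, which the paper packages as an induction on $n$ with the square $\tfrac12\y\big((x_1^{2i_1}-x_2^{2i_2}\cdots x_n^{2i_n})^2\big)$ appearing explicitly in the inductive step.
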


\begin{proof}
By Lemma \ref{l:holder}(2) it suffices to see that
\begin{equation}\label{e:adhoc}
\sum_{j=1}^{n-1}\frac{1}{2^j}\y(x_j^{2^{j+1} i_j})
+\frac{1}{2^{n-1}}\y(x_n^{2^n i_n})
-\y_{2i_1,\dots, 2i_n}\in\qm{\emptyset,\emptyset},
\end{equation}
which we prove by induction on $n$.
If $n=1$, the left-hand side of \eqref{e:adhoc} is 0.
Suppose \eqref{e:adhoc} holds for $n-1$. Then
\begin{align*}
&\sum_{j=1}^{n-1}\frac{1}{2^j}\y(x_j^{2^{j+1} i_j})
+\frac{1}{2^{n-1}}\y(x_n^{2^n i_n})
-\y_{2i_1,\dots, 2i_n}\\
=\,&
\sum_{j=1}^{n-1}\frac{1}{2^j}\y(x_j^{2^{j+1} i_j})
+\frac{1}{2^{n-1}}\y(x_n^{2^n i_n})
+\frac12\y\left(\left(x_1^{2i_1}-x_2^{2i_2}\cdots x_n^{2i_n}\right)^2\right)-\frac12\y(x_1^{4i_1})-\frac12\y_{0,4i_2,\dots,4i_n} \\
=\,&
\frac12\left(
\sum_{j=1}^{n-2}\frac{1}{2^j}\y(x_{j+1}^{2^{j+1} \cdot 2i_{j+1}})
+\frac{1}{2^{n-2}}\y(x_n^{2^{n-1} \cdot 2i_n})
-\y_{0,4i_2,\dots,4i_n}
\right)+\frac12\y\left(\left(x_1^{2i_1}-x_2^{2i_2}\cdots x_n^{2i_n}\right)^2\right)
\end{align*}
belongs to $\qm{\emptyset,\emptyset}$ by the induction hypothesis.
\end{proof}

We can now show that the solutions of \eqref{eq:Q_rm} and \eqref{eq:Q*_rm} converge to the same value.

\begin{lemma}\label{l:dual}
For all $r$ and $M$, the optimization problem \eqref{eq:Q_rm} is a linear conic problem, and \eqref{eq:Q*_rm} is its dual.
Sequences $(\sup Q_{r,M})_r$ and $(\inf Q^\vee_{r,M})_r$ are increasing. 
If $S_2$ is finite, $\bcK(S_1,S_2)\neq\emptyset$ and $M$ is large enough, then
\begin{equation}\label{e:nogap}
\lim_{r\to\infty} \sup Q_{r,M} 
= \lim_{r\to\infty} \inf Q^\vee_{r,M}. 
\end{equation}
\end{lemma}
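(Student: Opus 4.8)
The statement bundles three claims: (a) $Q_{r,M}$ is a linear conic problem with dual $Q^\vee_{r,M}$; (b) monotonicity of the two value sequences in $r$; and (c) the no-gap limit \eqref{e:nogap} when $S_2$ is finite, $\bcK(S_1,S_2)\neq\emptyset$ and $M$ is large. Claims (a) and (b) are essentially bookkeeping: $\cC_{r,M}=\QM{S_1,S_2}_{2r}+\R_{\geq0}\cdot(M-\Phi_r-\Psi_r)$ is a convex cone in the finite-dimensional space $\MP_{2r}$, membership $f-z\in\cC_{r,M}$ is a linear constraint in $z$, so $Q_{r,M}$ fits the standard primal conic template and $Q^\vee_{r,M}$ is the conic dual obtained by pairing with functionals $L\in\MP^\vee_{2r}$; the constraints $L(1)=1$, $L(M-\Phi_r-\Psi_r)\geq0$ and $L(\QM{S_1,S_2}_{2r})\subseteq\R_{\geq0}$ are exactly the dual feasibility conditions. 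Monotonicity follows because $\QM{S_1,S_2}_{2r}\subseteq\QM{S_1,S_2}_{2(r+1)}$ and, crucially, the extra ray is compatible: $\Phi_r\leq\Phi_{r+1}$ and $\Psi_r\leq\Psi_{r+1}$ as expansions with nonnegative extra terms, while $\Phi_{r+1}-\Phi_r$ and $\Psi_{r+1}-\Psi_r$ are themselves in $\QM{\emptyset,\emptyset}$ (sums of $\y$ of squares and of $x_j^{2k}$), so $M-\Phi_r-\Psi_r = (M-\Phi_{r+1}-\Psi_{r+1}) + (\text{element of }\QM{}_{2(r+1)})$; hence $\cC_{r,M}\subseteq\cC_{r+1,M}$, giving $\sup Q_{r,M}\leq\sup Q_{r+1,M}$, and dually for the infima.

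**The no-gap part.** Here I would argue $\leq$ is automatic (weak duality: any feasible $L$ evaluated on $f-z$ with $f-z\in\cC_{r,M}$ gives $L(f)\geq z$, so $\inf Q^\vee_{r,M}\geq\sup Q_{r,M}$ for each $r$), and the content is the reverse inequality in the limit, which is where strict feasibility of the \emph{dual} comes in. The standard route is a Slater-type condition: if $Q^\vee_{r,M}$ has a strictly feasible point — a functional $L$ with $L(1)=1$, $L(M-\Phi_r-\Psi_r)>0$, and $L$ strictly positive on the (finitely many) generators of $\QM{S_1,S_2}_{2r}$ — then strong conic duality (e.g.\ \cite[Section IV.6]{Bar}) gives $\sup Q_{r,M}=\inf Q^\vee_{r,M}$ with the primal value attained, for that fixed $r$; taking $r\to\infty$ then yields \eqref{e:nogap} from the monotonicity already established. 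To produce such an $L$: pick any $\mu\in\bcK(S_1,S_2)$ (nonempty by hypothesis); by Tchakaloff (Proposition \ref{p:tchakaloff}) replace it with a finitely supported $\nu\in\bcK(S_1,S_2)$ matching enough moments; take $\uX\in K(S_1)$; and perturb the resulting moment evaluation $L_0(g)=g(\nu,\uX)$ by mixing in the functional coming from the uniform measure on a cube together with a generic point, as in Lemmas \ref{l:hank_perturb} and \ref{l:hank_ext}, to make it strictly positive on squares while keeping $S_1,S_2$-positivity and, by the uniform bounds on $\Phi_r(\uX)$ and $\Psi_r(\nu)$ noted after Lemma \ref{l:expexp}, keeping $L(M-\Phi_r-\Psi_r)>0$ once $M$ exceeds those bounds (this is where "$M$ large enough" enters, and where finiteness of $S_2$ is needed so that $\bcK(S_1,S_2)$ is cut out by finitely many inequalities and Tchakaloff applies cleanly).

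**Main obstacle.** The delicate point is not the conic-duality formalism but verifying strict dual feasibility \emph{uniformly enough} and reconciling it with the closure behaviour: $\cC_{r,M}$ is \emph{not} closed (see the Remark after Lemma \ref{l:closed}), so one cannot directly invoke "closed cone $\Rightarrow$ no gap." The resolution is precisely to lean on a Slater point for the dual rather than on primal closedness; constructing that Slater point so that it simultaneously (i) is a genuine moment/pseudo-moment-type functional respecting $\y$-linearity, (ii) is strictly positive on all generators of $\QM{S_1,S_2}_{2r}$, and (iii) has $L(M-\Phi_r-\Psi_r)>0$ with $M$ chosen once and for all (independent of $r$) is the real work — items (ii) and (iii) pull in opposite directions since strict positivity on squares tends to inflate $L(\Phi_r)$ and $L(\Psi_r)$, and Lemma \ref{l:expexp} together with the post-lemma discussion is exactly the tool that keeps those quantities bounded. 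I expect the bulk of the proof to be this construction, with the duality theorem and the monotonicity passage to the limit being short.
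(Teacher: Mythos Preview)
Your handling of parts (a) and (b) is fine and matches the paper. The gap is in part (c): you aim for strong duality $\sup Q_{r,M}=\inf Q^\vee_{r,M}$ at each fixed $r$ via a Slater point for the dual, but such a point need not exist. A strictly feasible $L$ must lie in the interior of the dual cone $(\QM{S_1,S_2}_{2r})^\vee$, and that interior is empty whenever $\QM{S_1,S_2}_{2r}$ fails to be pointed. This happens already in the example you cite from the Remark after Lemma~\ref{l:closed}: with $S_1=\{-x_1^2\}$ one has $\pm x_1^2\in\QM{S_1,\emptyset}_{2r}$. Even if you retreat to relative interiors, the perturbation tools you invoke (Lemmas~\ref{l:hank_perturb} and~\ref{l:hank_ext}) only produce strict positivity on $\y(p^2)$-type elements; they say nothing about the generators $f^2 s$ with $s\in S_1$ or $q^2 t$ with $t\in S_2$, and a perturbation large enough to make those strictly positive may destroy feasibility on other constraints. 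So the construction you sketch does not deliver the Slater point you need.

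The paper does \emph{not} prove equality at each level. It uses only weak duality plus the bidual: since every feasible $L$ satisfies $L(f-\inf Q^\vee_{r,M})\ge0$, one has $f-\inf Q^\vee_{r,M}\in\cC_{r,M}^{\vee\vee}=\overline{\cC_{r,M}}$. The key step you are missing is then Lemma~\ref{l:closed}, which shows that the closure of $\MP_r\cap\cC_{2^r,M}$ is contained in $\{g:g+\ve\in\cC_{2^r,M}\text{ for all }\ve>0\}$; applied here (with the level jump $r\mapsto 2^r$) it gives $f-\inf Q^\vee_{r,M}+\ve\in\cC_{2^r,M}$ for all $\ve>0$, hence $\inf Q^\vee_{r,M}\le\sup Q_{2^r,M}$. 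Sandwiching $\sup Q_{r,M}\le\inf Q^\vee_{r,M}\le\sup Q_{2^r,M}$ and letting $r\to\infty$ yields \eqref{e:nogap}. Feasibility of the dual (hence $\inf Q^\vee_{r,M}<\infty$, needed for the bidual argument) comes from the moment evaluation at a finitely supported $\nu\in\bcK(S_1,S_2)$ and $\uX\in K(S_1)$, with $M\ge\sup_r(\Phi_r(\uX)+\Psi_r(\nu))$; this is where ``$M$ large enough'', $S_2$ finite, and $\bcK(S_1,S_2)\neq\emptyset$ enter. No strict feasibility is required.
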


\begin{proof}
The optimization problems \eqref{eq:Q_rm} and \eqref{eq:Q*_rm} have linear objective functions and constraints that consist of linear equations and cone memberships, so \eqref{eq:Q_rm} and \eqref{eq:Q*_rm} are linear conic problems. Furthermore, they are dual to each other, by the definition of duality \cite[Section IV.6]{Bar}.
The sequence $(\sup Q_{r,M})_r$ is increasing since $\cC_{r,M}\subseteq \cC_{r+1,M}$.
If $L$ is feasible for $Q^\vee_{r,M}$, then its
restriction is feasible for $Q^\vee_{r',M}$ if $r'<r$;
hence the sequence $(\inf Q^\vee_{r,M})_{r\geq d}$ is increasing (the set of constraints on $L$ increases with $r$, and consequently so does the infimum).
By weak duality \cite[Theorem IV.6.2]{Bar} we have
$\sup Q_{r,M}\leq\inf Q^\vee_{r,M}$.
The rest of the proof is dedicated to showing that $\inf Q^\vee_{r,M}\le\sup Q_{2^r,M}$;
then \eqref{e:nogap} follows immediately from 
$\sup Q_{r,M}\le\inf Q^\vee_{r,M}\le\sup Q_{2^r,M}$.
Since $\bcK(S_1,S_2)\neq\emptyset$, there exists $(\nu,\uX)\in \bcK(S_1,S_2)\times K(S_1)$.
By Proposition \ref{p:tchakaloff}, we can further assume that $\nu$ is supported on a finite set.
Let
$M\ge \sup_r (\Phi_r(\uX)+\Psi_r(\nu))$ (note that the right hand side is finite).
Then $L\in\MP_{2r}^\vee$ defined by $L(p)=p(\nu,\uX)$ is clearly feasible for \eqref{eq:Q*_rm},
whence $\inf Q^\vee_{r,M}<\infty$ for all $r\ge \frac{\deg f}{2}$.
Note that $L(f-\inf Q^\vee_{r,M})\geq0$ for all $L\in\cC_{r,M}^\vee$. This implies that $f-\inf Q^\vee_{r,M}$ is in $\cC_{r,M}^{\vee\vee}$, which is the closure of $\cC_{r,M}$. 
Therefore, $f-\inf Q^\vee_{r,M}+\ve\in \cC_{2^r,M}$ for all $\ve>0$ by Lemma \ref{l:closed}, so 
$\inf Q^\vee_{r,M}\le\sup Q_{2^r,M}$.
\end{proof}

Next, we turn to a partial estimate of positive functionals on moment polynomials with moment evaluations (cf.~\cite{infusino2014,alpay2015} for related infinite-dimensional moment problems), in the spirit of Nussbaum's theorem \cite[Theorem 14.25]{schmbook} on functionals satisfying the multivariate Carleman condition. This classical result states that if the marginals of a positive linear functional $L$ on $\px$ do not grow too fast (Carleman's condition, which is in particular satisfied if there is $M>0$ such that $L(x_j^{2k})\le M^k k!$ for all $k$), then $L$ equals integration with respect to some measure on $\R^n$.

\begin{proposition}\label{p:det}
Let $S_1\subseteq\px$, $S_2\subseteq\mp$, $M>0$, and $f\in\MP$. Suppose $L\in\MP^\vee$ satisfies
\begin{enumerate}[\rm (a)]
\item $L(1)=1$,
\item $L(\QM{S_1,S_2})=\R_{\ge0}$,
\item $L(x_j^{2k})\le k!M$ and $L(\y(x_j^{2k})^\ell)\le \ell!(k!)^\ell M$ for all $j\in\{1,\dots,n\}$ and $k,\ell\in\N$. 
\end{enumerate}
Then there exists $(\mu,\uX)\in\bcK(S_1,S_2)\times K(S_1)$ 
such that
$L(f) \ge f(\mu,\uX)$.
\end{proposition}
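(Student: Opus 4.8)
The plan is to read off, from the functional $L$, an actual probability measure and point, using a GNS/spectral construction whose key inputs are the essential self-adjointness furnished by hypothesis (c) and the symbolic Hölder and AM--GM inequalities of Lemmas \ref{l:holder} and \ref{l:adhoc}.

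\smallskip
\noindent\emph{Step 0 (reduction to one measure).} First I would observe that it suffices to produce a single $\mu\in\bcK(S_1,S_2)$ with $\int_{\R^n} f(\mu,\uX)\,d\mu(\uX)\le L(f)$. Indeed, the left-hand side is a $\mu$-average of the numbers $f(\mu,\uX)$ over $\uX\in\supp\mu\subseteq K(S_1)$, so the set $\{\uX: f(\mu,\uX)\le L(f)\}$ has positive $\mu$-measure and therefore contains a point of $\supp\mu\subseteq K(S_1)$, giving the desired pair. (Via Proposition \ref{p:evals} this is the same as building a probability space and an $\cL^\omega$-random variable with the stated properties.)

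\smallskip
\noindent\emph{Step 1 (GNS and essential self-adjointness).} Since $a^2\in\QM{S_1,S_2}$ for every $a\in\MP$, $L$ is a positive linear functional, so the GNS construction gives a Hilbert space $H$, a cyclic vector $\xi$, and commuting symmetric operators $X_j=\pi(x_j)$ and $Y_\gamma=\pi(\y_\gamma)$ with $L(\,\cdot\,)=\langle\pi(\,\cdot\,)\xi,\xi\rangle$. The first bound in (c) gives $\|X_j^k\xi\|^2=L(x_j^{2k})\le k!\,M$, so $\xi$ is a quasi-analytic vector for each $X_j$; by Nussbaum's criterion and its multivariate form (cf.\ \cite[Theorem 14.25]{schmbook}) the tuple $(X_1,\dots,X_n)$ is strongly commuting and essentially self-adjoint, hence admits a joint projection-valued measure on $\R^n$, localized on $K(S_1)$ because $L$ is nonnegative on $\QM{S_1}$ (note $p^2s_1\in\QM{S_1,S_2}$). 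The second bound in (c) gives $\|Z_{j,k}^\ell\xi\|^2=L(\y(x_j^{2k})^{2\ell})\le(2\ell)!(k!)^{2\ell}M$, so the operators $Z_{j,k}:=\pi(\y(x_j^{2k}))$ are essentially self-adjoint as well.

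\smallskip
\noindent\emph{Step 2 (disintegration).} Next I would disintegrate $H$ over the joint spectral measure $\tau$ of the strongly commuting, essentially self-adjoint family $\{X_j\}\cup\{Z_{j,k}\}$, writing $H=\int^{\oplus}H_\omega\,d\tau(\omega)$, $\xi=\int^{\oplus}\xi_\omega\,d\tau$ with $\|\xi_\omega\|=1$ a.e., and $Y_\gamma=\int^{\oplus}Y_\gamma^{(\omega)}\,d\tau$ for the remaining moment operators. For $\tau$-a.e.\ $\omega$ this yields a positive functional $L_\omega$ on $\MP$ (substitute the coordinates of $\omega$ for the $x_j$ and the $\y(x_j^{2k})$, the fibre operators for the other $\y_\gamma$, and pair against $\xi_\omega$) with $L_\omega(1)=1$, with $L_\omega|_{\px}$ equal to evaluation at a point $\uX_\omega\in K(S_1)$, with $L_\omega$ again nonnegative on $\QM{S_1,S_2}$ (fibrewise positivity of $\pi(g)\ge0$ for $g\in\QM{S_1,S_2}$), and with $L(f)=\int L_\omega(f)\,d\tau(\omega)$. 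Discarding a further $\tau$-null set by Chebyshev's inequality applied to the identities $\int L_\omega(\y(x_j^{2k})^\ell)\,d\tau=L(\y(x_j^{2k})^\ell)$, I can moreover keep bounds of the form (c) for each $L_\omega$.

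\smallskip
\noindent\emph{Step 3 (realizing the fibres; the hard part).} It remains to realize each $L_\omega$ by a moment evaluation. The pure part $L_\omega|_{\mp}$ is a positive functional whose Hankel and $S_1$-localizing matrices are positive semidefinite (from $\QM{S_1,S_2}$-positivity) and whose single-variable even moments $L_\omega(\y(x_j^{2k}))$ grow at most like $k!$ up to a constant; Lemmas \ref{l:holder} and \ref{l:adhoc}, whose certificates lie in $\qm{\emptyset,\emptyset}\subseteq\QM{S_1,S_2}$, upgrade this to a Carleman-type bound on all $L_\omega(\y_\gamma)$, so Nussbaum's theorem produces a determinate $\mu_\omega\in\prob{K(S_1)}$ with $\int x^\gamma\,d\mu_\omega=L_\omega(\y_\gamma)$. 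The delicate point — which I expect to be the main obstacle — is the $S_2$-constraint $s(\mu_\omega)\ge0$: this requires the fibre functional to be multiplicative on $\mp$, which is precisely what a complete spectral decomposition of the commuting family $\pi(\MP)$ should deliver on the fibres. Making this rigorous for the non-essentially-self-adjoint moment operators $Y_\gamma$, exploiting that Lemmas \ref{l:holder}--\ref{l:adhoc} dominate them inside $\QM{S_1,S_2}$ by the essentially self-adjoint $X_j,Z_{j,k}$, is the crux. Granting it, $\mu_\omega\in\bcK(S_1,S_2)$ and $L_\omega(f)=f(\mu_\omega,\uX_\omega)$, whence $L(f)=\int f(\mu_\omega,\uX_\omega)\,d\tau(\omega)$; averaging as in Step 0 yields an $\omega$ with $f(\mu_\omega,\uX_\omega)\le L(f)$, which finishes the proof.
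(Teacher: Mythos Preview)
Your plan follows the same architecture as the paper's proof (GNS construction, analyticity from (c), direct integral decomposition, then Nussbaum on the fibres), and you correctly identify the crux: multiplicativity of the fibre functionals on $\mp$, equivalently self-adjointness and strong commutation of \emph{all} the moment operators $Y_\gamma=\pi(\y_\gamma)$, not just the ``marginal'' ones $Z_{j,k}=\pi(\y(x_j^{2k}))$. However, this crux is exactly what you leave open (``Granting it\dots''), and resolving it is the heart of the paper's argument. Disintegrating over the spectral measure of the smaller family $\{X_j\}\cup\{Z_{j,k}\}$ as in your Step~2 will not produce one-dimensional fibres, so the fibre functionals $L_\omega$ need not be multiplicative on $\mp$, and the $S_2$-constraints cannot be verified.

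The paper fills the gap as follows. Using Lemma~\ref{l:adhoc} one gets the majorization
\[
\|Y_{i_1,\dots,i_n}p\|^2 \ \le\ \sum_{j=1}^{n-1}\tfrac{1}{2^j}\big\|\big(\overline{Y}_{(2^{j+1}i_j)e_j}+I\big)p\big\|^2
+\tfrac{1}{2^{n-1}}\big\|\big(\overline{Y}_{(2^{n}i_n)e_n}+I\big)p\big\|^2,
\]
i.e.\ each $Y_\gamma$ is dominated by a self-adjoint operator built from the already-known strongly commuting family $\{\overline{X}_j,\overline{Y}_{k e_j}\}$. Schm\"udgen's majorization criterion \cite[Proposition~2]{schm88} then forces the closures $\overline{Y}_\gamma$ to be self-adjoint and to strongly commute with all of $\overline{X}_j,\overline{Y}_{k e_j}$ (and hence with each other). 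This makes $\varphi$ an \emph{integrable representation} of $\MP$ in the sense of \cite[Definition~9.1.1]{schmOalg}, and the decomposition theory for such representations \cite[Theorem~12.3.5]{schmOalg} yields a direct integral into irreducible integrable representations, which are one-dimensional by the unbounded Schur lemma. Thus the fibre functionals are genuine homomorphisms $\varphi_t:\MP\to\R$, giving $\varphi_t(\QM{S_1,S_2})\subseteq\R_{\ge0}$, the point $\uX=(\varphi_t(x_j))_j\in K(S_1)$, and a positive functional $p\mapsto\varphi_t(\y(p))$ on $\px$ satisfying Carleman's condition; Nussbaum then produces $\mu\in\bcK(S_1,S_2)$.

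Two smaller points. Your Chebyshev step in Step~2 is stated too strongly: you cannot discard a $\tau$-null set and keep growth bounds of type (c) for almost every $\omega$, because the exceptional sets for the countably many $(j,k,\ell)$ can have full measure. The paper instead first restricts to $Z'=\{t:L_t(f)\le L(f)L_t(1)\}$ (positive measure by averaging), then chooses $N$ large enough that the total measure of $\{t:L_t(\y(x_j^{2k}))\ge N^k k!\}$ is strictly less than $\lambda(Z')$, leaving a positive-measure set where both properties hold; this weaker conclusion suffices. Finally, your Step~0 reduction is unnecessary in the paper's route: once the fibres are one-dimensional homomorphisms, each $\varphi_t$ directly gives both $\uX$ and $\mu$, with no need for $\uX\in\supp\mu$.
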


Before diving into the proof, two remarks are in order. Firstly, Proposition \ref{p:det} does not provide a full estimate of a functional with a moment evaluation; given a functional $L$ with properties (a), (b), (c) as above, it does not provide a point $\uX$ and a measure $\mu$ such that $L(g)\le g(\mu,\uX)$ for \emph{all} $g\in\MP$. Rather, it provides $\uX$ and $\mu$ such that $L(f)\ge f(\mu,\uX)$ for a \emph{single} $f\in\MP$ given in advance.
Secondly, while the proof of Proposition \ref{p:det} below relies on classical ideas from functional analysis and moment problems, it is rather intricate. At its core is a version of the Gelfand-Naimark-Segal (GNS) construction producing $\uX$ and $\mu$. However, handling unboundedness requires special care (hence analyticity of vectors and strong commutation of unbounded operators are invoked), and obtaining $\uX$ and $\mu$ is a two-step process (first constructing a point evaluation on $\MP$ using direct integral decomposition of the GNS representation, and then arguing that its restriction to $\y(\px)$ arises from a measure). For the theory of unbounded operators, which is extraneous to the rest of the paper outside of this proof, we refer the reader to the books \cite{schmOalg,schmUnbded} (see also footnotes for short explanations).

\begin{proof}[Proof of Proposition \ref{p:det}]
We divide the proof into three steps.

\emph{Step 1: unbounded GNS construction}.
On $\MP$ we define a semi-inner product $\langle p,q\rangle =L(pq)$. Let $\cN=\{p\in\MP\colon L(p^2)=0\}$. By the Cauchy-Schwarz inequality for semi-inner products, $\cN$ is an subspace of $\MP$: if $p,q\in \cN$ then $L(pq)^2\le L(p^2)L(q^2)=0$ implies that $L((p+q)^2)=L(p^2)+2L(pq) +L(q^2)=0$, so $p+q\in\cN$. 
Furthermore, if $p\in\cN$ then $p^2\in \cN$. Indeed, if $L(p^2)=0$, then
$$0\le L\left((p\pm \ve p^3)^2\right)
=L(p^2)\pm 2\ve L(p^4)+\ve^2 L(p^6)
=\ve\left(\pm 2 L(p^4)+\ve L(p^6)\right)
$$
and hence $\pm 2 L(p^4)+\ve L(p^6)\ge0$ for every $\ve>0$, implying $L(p^4)=0$. Consequently, $\cN$ is an ideal of $\MP$: if $p\in\cN$ and $q\in\MP$ then $L((pq)^2)^2\le L(p^4)L(q^4)=0$ by the Cauchy-Schwarz inequality, and so $pq\in\cN$.
Let $\cH$ be the completion of the inner product space $\MP/\cN$. Multiplication with generators $x_j$ and $\y_{i_1,\dots,i_n}$ in $\MP$ induces symmetric\footnote{
An operator $S:D\to\cH$ on a dense subspace $D\subseteq\cH$ is \emph{symmetric} if $\langle Su,v\rangle=\langle u,Sv\rangle$ for all $u,v\in D$.
} unbounded operators $X_j$ and $Y_{i_1,\dots,i_n}$ on $\cH$ with a dense domain $\MP/\cN$. 
Moreover, the Cauchy-Schwarz inequality and the condition (c) on $L$ imply that for all $j\in\{1,\dots,n\}$ and $k,\ell\in\N$, the inequalities
\begin{align*}
\|X_j^k p\|^2&={L(x_j^{2k} p^2)}\le \sqrt{L(x_j^{4k})}\sqrt{L(p^4)}\le 
\sqrt{(2k)!} \sqrt{M}{\|p^2\|} \\
\|Y_{k e_j}^\ell p\|^2&={L(\y(x_j^k)^{2\ell} p^2)}
\le \sqrt{L(\y(x_j^{2k})^{4\ell}}\sqrt{L(p^4)} 
\le \sqrt{(4\ell)!}\sqrt{(k!)^{4\ell} M}{\|p^2\|}
\end{align*}
hold for all $p\in\MP/\cN$ and $k,\ell\in\N$ 
(here, $Y_{k e_j}=Y_{\dots,0,k,0\dots}$ with $k$ at the $j$th position is the operator given by multiplication with $\y(x_j^k)$).
By induction on $k$ it is easy to see that $\sqrt[m]{(mk)!}\le m^k k!$ for all $k,m\in\N$. Therefore,
\begin{align*}
\|X_j^k p\|&\le \sqrt{k!}\sqrt{2}^k\sqrt[4]{M}\sqrt{\|p^2\|}, \\
\|Y_{k e_j}^\ell p\|&\le 
\ell!(4\cdot k!)^\ell\sqrt[4]{M}\sqrt{\|p^2\|}
\end{align*}
for all $k,\ell,j$ and $p\in\mx/\cN$.
In terms of \cite[Definition 7.1]{schmUnbded}, the elements of $\MP/\cN$ are analytic\footnote{
A vector $v\in D$ is \emph{analytic} for $S:D\to D$ if there is $M>0$ such that $\|S^kv\|\le M^k k!$ for all $k$.
} vectors for $X_j$ and $Y_{k e_j}$, for every $j\in\{1,\dots,n\}$ and $k\in\N$.
By \cite[Theorem 7.18]{schmUnbded}, the closures\footnote{
The \emph{closure} of an operator $T$ on $\cH$ is the operator on $\cH$ whose graph in $\cH\oplus\cH$ is the closure of the graph of $T$. Every densely defined symmetric operator admits the closure.
} $\overline{X}_j$ and $\overline{Y}_{k e_j}$ are pairwise strongly commuting self-adjoint operators\footnote{
An operator is \emph{self-adjoint} if it is equal to its own adjoint on $\cH$ (this notion is stronger than being symmetric).
\emph{Spectral projections} of a self-adjoint operator $S$ are projections on $\cH$ corresponding to measurable subsets of the spectrum of $S$ via the spectral theorem.
Two self-adjoint operators \emph{strongly commute} if all their spectral projections commute.
}.
In particular, $\overline{Y}_{2k e_j}$ are positive self-adjoint operators. By Lemma \ref{l:adhoc} and the Cauchy-Schwarz inequality we have
\begin{equation}\label{e:major}
\begin{split}
\|Y_{i_1,\dots,i_n} p\|^2
&=L(\y_{i_1,\dots,i_n}^2p^2)\\
&\le 
\sum_{j=1}^{n-1}\frac{1}{2^j}L\left(\y(x_j^{2^{j+1} i_j})p^2\right)
+\frac{1}{2^{n-1}}L\left(\y(x_n^{2^n i_n})p^2\right) \\
&\le
\sum_{j=1}^{n-1}\frac{1}{2^j}\left\|\left(\overline{Y}_{(2^{j+1} i_j)e_j}+I\right) p\right\|^2
+\frac{1}{2^{n-1}}\left\|\left(\overline{Y}_{(2^n i_n)e_n}+I\right) p\right\|^2
\end{split}
\end{equation}
for all $p\in\MP/\cN$. 
That is, the operators $Y_{i_1,\dots,i_n}$ are majorized in the sense of \eqref{e:major} by self-adjoint operators that pairwise strongly commute with all $\overline{X}_j$ and $\overline{Y}_{k e_j}$.
Thus, for every collection $i_1,\dots,i_n,i_1',\dots,i_n',j$ there exists a self-adjoint operator $S$ on $\cH$
that commutes with $X_j,Y_{i_1,\dots,i_n},Y_{i_1',\dots,i_n'}$ on $\MP/\cN$, and satisfies
$$
\|X_jp\|,\, \|Y_{i_1,\dots,i_n}p\|,\, \|Y_{i_1',\dots,i_n'}p\| \le \|Sp\|
$$
for all $p\in\MP/\cN$. Then the closures $\overline{X}_j,\overline{Y}_{i_1,\dots,i_n},\overline{Y}_{i_1',\dots,i_n'}$ are pairwise strongly commuting self-adjoint operators by \cite[Proposition 2]{schm88}.
Since real polynomials in strongly commuting self-adjoint operators are again self-adjoint,
$$\varphi(x_j)=\overline{X}_j,\quad \varphi(\y_{i_1,\dots,i_n})=\overline{Y}_{i_1,\dots,i_n}$$
defines an integrable representation\footnote{
An \emph{integrable representation} of a commutative algebra (with trivial involution) is a homomorphism into a subalgebra of strongly commuting self-adjoint operators on a Hilbert space.
} $\varphi$ of $\MP$ on $\cH$ according to \cite[Definition 9.1.1]{schmOalg}.  By the construction, $\varphi(\QM{S_1,S_2})$ consists of positive self-adjoint operators.

\emph{Step 2: integral decomposition of $\varphi$}.
Irreducible integral representations of $\MP$ are one-dimensional by the unbounded analog of Schur's lemma\footnote{
The Schur-Dixmier lemma implies that irreducible representations of an abelian $\C$-algebra of countable dimension are one-dimensional.
} \cite[Corollary 9.1.11]{schmOalg}. 
Since $\MP$ is countably generated, by \cite[Theorem 12.3.5 and subsequent Remark 2]{schmOalg} there exist a compact metric space $T$ with a Borel probability measure $\lambda$ and a measurable family of integral representations $\varphi_t$ of $\MP$ acting on Hilbert spaces $\cH_t$ for $t\in T$, such that $\varphi$ is unitarily equivalent to the direct integral\footnote{
\emph{Direct integrals} of Hilbert spaces, operators and representations generalize direct sums of these objects. While infinite-dimensional integral representations do not necessarily admit a direct sum decomposition into irreducible ones, they do admit a direct integral decomposition into irreducible ones.
}
$$\int_T^\oplus \varphi_t \d \lambda(t)$$
on the Hilbert space $\cH'=\int_T^\oplus \cH_t \d\lambda(t)$, and almost every (relative to $\lambda$) $\cH_t$ is one-dimensional.
Let $e_t\in\cH_t$ be such that that the vector $\int_T^\oplus e_t\d\lambda(t)\in\cH'$ corresponds to the unit vector $1\in\cH$ under the aforementioned unitary equivalence. 
Since $1\in\cH$ is a cyclic\footnote{
A vector $v\in\cH$ is \emph{cyclic} for an algebra of operators $\mathcal{A}$ on $\cH$ if $\mathcal{A}\cdot v$ is dense in $\cH$.
} vector for $\varphi(\MP)$, we have $e_t\neq0$ for almost all $t$.
Define $L_t:\MP\to \R$ as $L_t(p)=\langle \varphi_t(p)e_t,e_t\rangle$; then
\begin{equation}\label{e:dirint}
L(p)=\int_T L_t(p)\d\lambda(t)
\end{equation}
for all $p\in\MP$.
Since $\varphi(\QM{S_1,S_2})$ consists of positive self-adjoint operators, it follows by \cite[Proposition 12.2.3(ii)]{schmOalg} that $\varphi_t(\QM{S_1,S_2})$ consists of positive selfadjoint operators, for almost every $t$. In particular, $L_t(\QM{S_1,S_2})=\R_{\ge0}$ for almost every $t$. 
The subset
$$Z'=\{t\in T\colon L_t(f)\le L(f)L_t(1)\}$$
of $T$ is measurable. Note that $\lambda(Z')=0$ would by \eqref{e:dirint} give rise to the contradiction
$$L(f)=\int_T L_t(f)\d\lambda(t)> \int_T L(f)L_t(1)\d\lambda(t)=L(f)L(1)=L(f),$$
so $m:=\lambda(Z')>0$.

Let $N>0$ be sufficiently large so that $\sum_{k=1}^\infty \frac{n M}{N^k}<m$, and consider the sets
$$Z_{j,k}=\left\{t\in T\colon L_t\big(\y(x_j^{2k})\big)\ge N^k k!\right\}$$
for $j\in\{1,\dots,n\}$ and $k\in\N$. Then $Z_{j,k}$ is measurable, and
$$
k!M\ge L\big(\y(x_j^{2k})\big) 
=\int_T L_t\big(\y(x_j^{2k})\big)\d\lambda(t) 
\ge\int_{Z_{j,k}} L_t\big(\y(x_j^{2k})\big)\d\lambda(t) 
\ge N^k k!\lambda(Z_{j,k}),
$$
so $\lambda(Z_{j,k})\le \frac{M}{N^k}$. By the choice of $N$,
$$\lambda\left(\bigcup_{j=1}^n\bigcup_{k\in\N} Z_{j,k}\right)\le 
\sum_{j=1}^n\sum_{k=1}^\infty\lambda(Z_{j,k})\le 
n\sum_{k=1}^\infty\frac{M}{N^k}<m,
$$
and so
$$\lambda\left(Z'\setminus\bigcup_{j,k} Z_{j,k}\right)>0.$$
In particular, there exists $t\in T$ such that $\varphi_t$ is one-dimensional, $e_t\neq0$, and
$$L_t(\QM{S_1,S_2})=\R_{\ge0},
\quad L_t(f)\le L(f)L_t(1),
\quad L_t\big(\y(x_j^{2k})\big)\le N^kk!\text{ for }k\in\N.$$
Since $\varphi_t:\MP\to\R$ is a nonzero homomorphism, it is unital, thus $L_t(p)=\varphi_t(p)L_t(1)$ for all $p\in\MP$. Since $L_t(1)=\|e_t\|^2>0$,
$$\varphi_t(\QM{S_1,S_2})=\R_{\ge0},
\quad \varphi_t(1)= 1,
\quad \varphi_t(f)\le L(f),
\quad \varphi_t\big(\y(x_j^{2k})\big)\le \frac{N^kk!}{L_t(1)}\text{ for }k\in\N.$$

\emph{Step 3: conclusion}.
Consider the functional $\widetilde L:\px\to\R$ given by $\widetilde L(p)=\varphi_t(\y(p))$. 
Then $\widetilde L(\QM{S_1})=\R_{\ge0}$, and $\widetilde L$ satisfies the Carleman condition since $\widetilde L(x_j^{2k})\le \frac{1}{L_t(1)}N^k k!$ for all $j=1,\dots,n$ and $k\in\N$.
By a refined version of Nussbaum's theorem \cite[Theorem 14.25]{schmbook} applied to $\widetilde L$, there is $\mu\in\bcK(S_1,S_2)$ such that $\varphi_t(\y_{i_1,\dots,i_n})=\widetilde L(x_1^{i_1}\cdots x_n^{i_n})=\int x_1^{i_1}\cdots x_n^{i_n} \d\mu$ for all $(i_1,\dots,i_n)\in \N_0^n$. Let $\uX=(\varphi_t(x_1),\dots,\varphi_t(x_n))$; then $\uX\in K(S_1)$ because $\varphi_t(\QM{S_1,S_2})=\R_{\ge0}$. Lastly, $f(\mu,\uX)=\varphi_t(f)\le L(f)$.
\end{proof}

The following auxiliary lemma bounds the growth of a positive linear functional on moment polynomials in terms of its marginal values (on $x_j^{2k}$ and $\y(x_j^{2k})$).

\begin{lemma}\label{l:tech}
Let $r\in\N$, $M\ge1$ and $L\in \MP^\vee_{2r}$. Suppose
$L(\QM{\emptyset,\emptyset}_{2r})=\R_{\ge0}$ and
\begin{equation}\label{e:growth0}
L\left( x_j^{2k}\right),L\left( \y(x_j^{2k})\right)\leq k!\,M
\end{equation}
for $j=1,\dots,n$ and $k=1,\dots,r$. Then
\begin{equation}\label{e:growth1}
|L(w)|\leq \sqrt{(\deg w)!\,M}
\end{equation}
for all monomials $w$ in $\MP_r$. 
\end{lemma}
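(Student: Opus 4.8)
The starting observation is that, $L$ being nonnegative on $\QM{\emptyset,\emptyset}_{2r}$, it is nonnegative on $p^2$ for every $p\in\MP_r$; hence $\langle p,q\rangle:=L(pq)$ is a positive semidefinite symmetric bilinear form on $\MP_r$, and Cauchy--Schwarz gives $|L(pq)|\le L(p^2)^{1/2}L(q^2)^{1/2}$ for all $p,q\in\MP_r$. Taking $q=1$ and using the $k=0$ instance of \eqref{e:growth0} (since $\y(x_j^0)=1$) together with $M\ge1$ disposes of the case $w=1$; assume $\deg w\ge1$ from now on.

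The idea is to bound $|L(w)|$ by repeatedly splitting a monomial into two factors of (roughly) half degree and applying Cauchy--Schwarz, until one reaches pure powers of a single $x_j$ or of a single moment variable. Concretely, writing $w$ as a product of its generators and splitting $w=uv$ so that $\deg u,\deg v\le\lceil\tfrac12\deg w\rceil$, one gets $|L(w)|\le L(u^2)^{1/2}L(v^2)^{1/2}$ with $u^2,v^2\in\MP_{2r}$; applying the same step to $L(u^2)$ and $L(v^2)$, and so on, one ends up bounding $|L(w)|$ by a product — with dyadic fractional exponents — of quantities of the form $L(x_j^{2k})$ (with $2k\le 2r$) and $L(\y_\alpha^{2m})$ (with $2m|\alpha|\le 2r$), where $\alpha\in\N_0^n$.

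These remaining quantities are handled via the hypothesis and the symbolic H\"older inequalities from the preliminaries. One has $L(x_j^{2k})\le k!\,M$ directly from \eqref{e:growth0}; and for $L(\y_\alpha^{2m})$ one uses Lemma~\ref{l:holder}(2) together with Lemma~\ref{l:adhoc}, which — modulo $\qm{\emptyset,\emptyset}$ and within the available degrees — bound $\y_\alpha^{2m}$, respectively $\y_\alpha^2$, above by nonnegative combinations of the admissible pure powers $\y(x_l^{2k})$ covered by \eqref{e:growth0}. The upshot is that $|L(w)|$ is bounded by a product of factors of the form $(k!\,M)^{1/2^s}$; the elementary estimate $\sqrt[m]{(mk)!}\le m^kk!$ — used exactly as in the proof of Proposition~\ref{p:det} — collapses the nested roots of factorials, and $\prod_j a_j!\le(\sum_j a_j)!$ recombines the contributions from the different generators into a single $(\deg w)!$, yielding $|L(w)|\le\sqrt{(\deg w)!\,M}$.

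The main obstacle is the bookkeeping. On the one hand, one must verify that every monomial appearing in the iterated Cauchy--Schwarz has degree at most $2r$ — so that $L$ is defined on it and its positivity on $\QM{\emptyset,\emptyset}_{2r}$, as well as Lemmas~\ref{l:holder} and~\ref{l:adhoc}, can legitimately be invoked — which dictates how to split at each step and how the powers of the moment variables are reduced. On the other hand, the numerical constants must be tracked through the recursion so that the factorials $k_j!$ and the powers of $2$ produced by $\sqrt[m]{(mk)!}\le m^kk!$ telescope precisely into the asserted bound $\sqrt{(\deg w)!\,M}$ rather than into a weaker one carrying extraneous multiplicative factors.
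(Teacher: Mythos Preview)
Your strategy has a genuine gap that cannot be fixed by bookkeeping alone: the iterated half-degree Cauchy--Schwarz produces the wrong power of $M$. Whenever you split $w=uv$ and write $|L(w)|\le L(u^2)^{1/2}L(v^2)^{1/2}$, and then recurse, the exponents $1/2^s$ at the leaves always sum to~$1$ (this is just the fact that the dyadic weights along a binary tree form a probability distribution on the leaves). Hence after inserting the hypothesis $L(x_j^{2k})\le k!\,M$ at each leaf you obtain a bound of the form $C\cdot M$, not $C\cdot\sqrt{M}$. Already for $n=2$ and $w=x_1x_2$ your scheme gives $|L(x_1x_2)|\le L(x_1^2)^{1/2}L(x_2^2)^{1/2}\le M$, whereas the lemma asserts $|L(x_1x_2)|\le\sqrt{2M}$, which is strictly smaller once $M>2$. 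No amount of telescoping of factorials repairs this; the exponent of $M$ is structurally wrong.

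The paper proceeds differently. It first reduces via $|L(w)|^2\le L(1)L(w^2)=L(w^2)$, so that only \emph{one} factor of $M$ is needed, and then bounds $L(w^2)\le(\deg w)!\,M$ by twice invoking \cite[Lemma~6.2]{las06perturb} --- an extremality/log-convexity result for positive semidefinite moment matrices stating that $L(x^{2\alpha})\le\max_j L(x_j^{2|\alpha|})$ without ever leaving degree $2|\alpha|$. The first application (to the inner moment matrix $(L(\y(\alpha\beta)))_{\alpha,\beta}$) yields $L(\y_{2i_1,\dots,2i_n})\le(i_1+\cdots+i_n)!\,M$; Lemma~\ref{l:holder}(2) then turns this into bounds on $L(\y_{i_1,\dots,i_n}^{2k})$; and a second application (to the outer moment matrix indexed by monomials of $\MP_r$) gives $L(w^2)\le(\deg w)!\,M$. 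Lemma~\ref{l:adhoc}, which you propose to use instead, is unsuitable here: it introduces terms $\y(x_j^{2^{j+1}i_j})$ whose degree can vastly exceed $2r$, so $L$ is not even defined on them. The factorial estimate $\sqrt[m]{(mk)!}\le m^k k!$ plays no role in the paper's argument for this lemma.
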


\begin{proof}
By applying \cite[Lemma 6.2]{las06perturb} to the moment matrix $(L(\y(\alpha\beta)))_{\alpha,\beta}$ indexed by $x_1^{i_1}\cdots x_n^{i_n}$ for $i_1+\cdots+i_n\le r$, one obtains
\begin{equation}\label{e:QrMbd1}
L(\y_{2i_1,\dots,2i_n})\leq (i_1+\cdots+i_n)!\,M
\end{equation}
for $i_1+\cdots+i_n\le r$. 
Next,
\begin{equation}\label{e:QrMbd2}
L(\y_{i_1,\dots,i_n}^{2k})
\leq L(\y_{2k i_1,\dots,2k i_n})
\end{equation}
for $k(i_1+\cdots+i_n)\le r$ since $\y_{2k i_1,\dots,2k i_n}-\y_{i_1,\dots,i_n}^{2k}
\in\qm{\emptyset,\emptyset}$ 
by Lemma \ref{l:holder}(2).
By \eqref{e:QrMbd1}, \eqref{e:QrMbd2} and \cite[Lemma 6.2]{las06perturb}, 
$$L(w^2)
\leq (\deg w)!\,M
$$
for all monomials $w\in\MP_r$.
Finally, \eqref{e:growth1} follows from $L(w^2)-L(w)^2=L((w-L(w))^2)\ge0$ for $\deg w\le r$.
\end{proof}

In the final preparation for Theorem \ref{t:lass}, we connect the solution of the optimization problem \eqref{eq:Q*_rm} with the infimum of $f$ on $\bcK(S_1,S_2)\times K(S_1)$.

\begin{lemma}\label{l:upper}
Suppose $S_2$ is finite, $\bcK(S_1,S_2)\neq\emptyset$ and $f\in\MP$ is bounded below on $\bcK(S_1,S_2)\times K(S_1)$; denote
$f_*:=\inf_{(\mu,\uX)\in\bcK(S_1,S_2)\times K(S_2)} f(\mu,\uX) > -\infty$.
For large enough $M>0$,
\eqref{eq:Q*_rm} is feasible for $2r\geq \deg f$,
and $\inf Q^\vee_{r,M} \nearrow f_M$ as $r\to\infty$
for some $f_M\geq f_*$.
\end{lemma}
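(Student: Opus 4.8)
The plan is to establish feasibility with the help of Tchakaloff's theorem, and to prove the convergence statement by a compactness argument that produces a functional on all of $\MP$ satisfying the hypotheses of Proposition \ref{p:det}. For feasibility, fix $(\mu_0,\uX_0)\in\bcK(S_1,S_2)\times K(S_1)$ (possible since this set is nonempty) and put $d=\max\{\deg f,\deg s\colon s\in S_2\}$. By Proposition \ref{p:tchakaloff} there is a finitely supported $\nu_0\in\bcK(S_1,S_2)$ agreeing with $\mu_0$ on all moments of degree at most $d$, so that $f(\nu_0,\uX_0)=f(\mu_0,\uX_0)$. By the paragraph following Lemma \ref{l:expexp}, $\sup_r\bigl(\Phi_r(\uX_0)+\Psi_r(\nu_0)\bigr)<\infty$; fix $M\ge1$ exceeding this supremum. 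Then the point evaluation $L_0(p):=p(\nu_0,\uX_0)$, restricted to $\MP_{2r}$, satisfies $L_0(1)=1$, $L_0(M-\Phi_r-\Psi_r)\ge0$, and (by Proposition \ref{p:ezi}) $L_0(g)\ge0$ for all $g\in\QM{S_1,S_2}_{2r}$; hence it is feasible for $Q^\vee_{r,M}$ whenever $2r\ge\deg f$, and $\inf Q^\vee_{r,M}\le f(\nu_0,\uX_0)<\infty$.

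Combining this with the monotonicity recorded in Lemma \ref{l:dual}, the sequence $(\inf Q^\vee_{r,M})_r$ increases to a finite limit $f_M$, and it remains to prove $f_M\ge f_*$. For each $r$ with $2r\ge\deg f$ pick a feasible point $L_r$ of $Q^\vee_{r,M}$ with $L_r(f)\le\inf Q^\vee_{r,M}+\tfrac1r$. Any feasible $L$ is nonnegative on $\QM{\emptyset,\emptyset}_{2r}$, and expanding $L(M-\Phi_r-\Psi_r)\ge0$ into its termwise nonnegative summands yields $L(x_j^{2k}),L(\y(x_j^{2k}))\le k!M$ for $k\le r$, so $L$ (restricted to any $\MP_{2r'}$ with $r'\le r$) meets the hypotheses of Lemma \ref{l:tech}; applying that lemma at level $r'=\deg w$ gives the uniform estimate $|L_r(w)|\le\sqrt{(\deg w)!\,M}$ for every monomial $w\in\MP$ and every $r\ge\deg w$. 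A diagonal extraction over the countable set of monomials produces a subsequence $(r_i)_i$ along which $L_{r_i}(w)$ converges for all $w$; extending $\R$-linearly defines $L\colon\MP\to\R$, and since $f$ is a finite combination of monomials and $\inf Q^\vee_{r_i,M}\to f_M$, we get $L(f)=\lim_iL_{r_i}(f)=f_M$.

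Finally I would verify that $L$ satisfies the hypotheses of Proposition \ref{p:det}. The normalization $L(1)=1$ is immediate, and for $g\in\QM{S_1,S_2}$ one has $g\in\QM{S_1,S_2}_{2r_i}$ for large $i$, whence $L(g)=\lim_iL_{r_i}(g)\ge0$. For condition (c), fix $j\in\{1,\dots,n\}$ and $k,\ell\in\N$ and choose $r_i\ge k\ell$: then $x_j^{2k}$ occurs among the summands of $\Phi_{r_i}$ and $\y(x_j^{2k})^\ell$ among those of $\Psi_{r_i}$, each summand being a nonnegative multiple of an element of $\QM{\emptyset,\emptyset}_{2r_i}$ (since $x_j^{2k}=(x_j^k)^2$ and $\y(x_j^{2k})^\ell$ is a square times $\y((x_j^k)^2)^{\ell\bmod2}$), so $L_{r_i}\ge0$ on each of them; then the constraint $L_{r_i}(\Phi_{r_i})+L_{r_i}(\Psi_{r_i})\le M$ forces $L_{r_i}(x_j^{2k})\le k!M$ and $L_{r_i}(\y(x_j^{2k})^\ell)\le(k!)^\ell\ell!M$, and letting $i\to\infty$ gives (c). Proposition \ref{p:det} then provides $(\mu,\uX)\in\bcK(S_1,S_2)\times K(S_1)$ with $f_M=L(f)\ge f(\mu,\uX)\ge f_*$, as wanted. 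The main difficulty I anticipate is the bookkeeping in the middle step: one must keep the estimates from Lemma \ref{l:tech} and the growth bounds extracted from the $\Phi_r,\Psi_r$ constraint valid on the correct ranges of the index $r$ so that they survive the diagonal passage to the limit; the remaining verifications are routine.
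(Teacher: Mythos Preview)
Your proposal is correct and follows essentially the same approach as the paper: establish feasibility via a finitely supported measure from Tchakaloff, extract the marginal bounds from the constraint $L(M-\Phi_r-\Psi_r)\ge0$, invoke Lemma \ref{l:tech} for uniform bounds on monomials, pass to a subsequential limit to build $L\colon\MP\to\R$, and apply Proposition \ref{p:det}. The only cosmetic differences are that the paper uses Banach--Alaoglu in $\ell^\infty$ (with a rescaling by $c_{\deg w}$) where you use a diagonal extraction, and the paper works with optimizers of $Q^\vee_{r,M}$ where your choice of near-optimizers $L_r(f)\le\inf Q^\vee_{r,M}+\tfrac1r$ is in fact slightly more careful.
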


\begin{proof}
Feasibility of \eqref{eq:Q*_rm} follow by the same argument as in the proof of Lemma \ref{l:dual}.
Let $L$ be feasible for $Q^\vee_{r,M}$. 
Then $L(M-\Phi_r-\Psi_r)\ge0$ implies
\begin{equation}\label{e:boundterms}
L(x_j^{2k})\le k!\,M\quad\text{for }k\le r,\qquad
L(\y(x_j^{2k})^\ell)\le\ell!\,(k!)^\ell\, M \quad\text{for } k\ell\le r.
\end{equation}
Let $d\in\N$, and $r\ge d$. By Lemma \ref{l:tech} and \eqref{e:boundterms} (for $\ell=1$),
\begin{equation}\label{e:QrMbd}
|L(w)|\le \sqrt{d!\, M}=:c_d
\end{equation}
for all monomials $w$ in $\MP_d$.
In particular, $L(f)$ is uniformly bounded for large enough $r$. 
Therefore, $(\inf Q^\vee_{r,M})_r$ is an increasing function bounded from above,
whence $\inf Q^\vee_{r,M} \nearrow f_M$ as $r\to\infty$, for some $f_M$.
It remains to show $f_M\geq f_*$.

Let $\ell^\infty$ be the space of bounded functions on monomials in $\MP$.
For every $r\in\N$ let $L^{(r)}$ be an optimizer of \eqref{eq:Q*_rm}, and let $s_r\in\ell^\infty$ be given as $s_r(w)=\frac{1}{c_{\deg w}}L^{(r)}(w)$ for monomials $w$ with $\deg w\le 2r$, and $s_r(w)=0$ for all other monomials $w$. 
Note that for every monomial $w$, $s_r(w)$ is bounded by 1 for all sufficiently large $r$.
By the Banach-Alaoglu theorem \cite[Theorem III.2.9]{Bar}, 
the closed unit ball in $\ell^\infty$ is compact in the weak-* topology. In particular, the sequence $(s_r)_r$ has an accumulation point in $\ell^\infty$ with respect to the weak-* topology. Hence, there
is $s\in\ell^\infty$ and a subsequence $(s_{r_m})_m$ converging to $s$. 
Define
$$L:\MP\to\R,\qquad L(w) = c_{\deg w}\cdot s(w).$$
Then $L^{(r_m)}|_{\MP_d}\to L|_{\MP_d}$ as $m\to \infty$, for every $d\in\N$.
In particular, $L$ is a unital linear functional, $L(f)=f_M$, and
$L(\QM{S_1,S_2})=\R_{\ge0}$.
Let $d\in\N$ be arbitrary. Then for every $r_m\ge d$,
$$L(x_j^{2k})\le k!\,M\quad\text{for }k\le r_m,\qquad
L(\y(x_j^{2k})^\ell)\le\ell!\,(k!)^\ell\, M \quad\text{for } k\ell\le r_m$$
by \eqref{e:boundterms} since $L^{(r_m)}$ is feasible for $Q^\vee_{r_m,M}$.
Consequently,
$$L(x_j^{2k})\le k!\,M\quad\text{for }k\in\N,\qquad
L(\y(x_j^{2k})^\ell)\le\ell!\,(k!)^\ell\, M\quad\text{for } k,\ell\in\N.$$
Therefore, $L(f)\ge f_*$
by Proposition \ref{p:det}.
\end{proof}

The following is a moment polynomial analog of Lasserre's Positivstellensatz \cite{las06perturb,LasserreNetzer}.
Its proof relies on the interplay of solutions of optimization problems \eqref{eq:Q_rm} and \eqref{eq:Q*_rm} with the infimum of $f$ on $\bcK(S_1,S_2)\times K(S_1)$.

\begin{theorem}[Perturbative Positivstellensatz]\label{t:lass}
Let $S_1\subseteq\px$ and $S_2\subseteq\mp$, and suppose $S_2$ is finite. 
The following statements are equivalent for $f\in\MP$:
\begin{enumerate}[\rm (i)]
    \item $f\ge0$ on $\bcK(S_1,S_2)\times K(S_1)$;
    \item for every $\ve>0$ there exists $r\in\N$ such that $f+\ve (\Phi_r+\Psi_r) \in \QM{S_1,S_2}$.
\end{enumerate}
The following statements are equivalent for $f\in\mp$:
\begin{enumerate}[\rm (i')]
	\item $f\ge0$ on $\bcK(S_1,S_2)$;
	\item for every $\ve>0$ there exists $r\in\N$ such that $f+\ve (1+\Psi_r) \in \qm{S_1,S_2}$.
\end{enumerate}
\end{theorem}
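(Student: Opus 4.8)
The plan is to prove both equivalences by first disposing of the elementary implications $(ii)\Rightarrow(i)$ and $(ii')\Rightarrow(i')$, and then deducing $(i)\Rightarrow(ii)$ and $(i')\Rightarrow(ii')$ from the conic-programming machinery assembled in Lemmas \ref{l:dual} and \ref{l:upper} (the latter resting on Proposition \ref{p:det}). For $(ii)\Rightarrow(i)$: by Proposition \ref{p:tchakaloff} it suffices to check $f\ge0$ at pairs $(\nu,\uX)$ with $\nu\in\bcK(S_1,S_2)$ finitely supported. For such a pair and any $\ve>0$, choose $r$ with $f+\ve(\Phi_r+\Psi_r)\in\QM{S_1,S_2}$; evaluating this membership via Proposition \ref{p:ezi} gives $f(\nu,\uX)\ge-\ve\big(\Phi_r(\uX)+\Psi_r(\nu)\big)$, where $\Phi_r(\uX)\le\sum_j\exp(X_j^2)$ and $\Psi_r(\nu)$ is bounded independently of $r$ by Lemma \ref{l:expexp}; letting $\ve\to0$ yields $f(\nu,\uX)\ge0$. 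The pure case $(ii')\Rightarrow(i')$ is identical, with $\Phi_r$ replaced by the constant $1$.

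For $(i)\Rightarrow(ii)$: since $\QM{S_1,S_2}$ is a cone and hypothesis $(i)$ survives $f\mapsto f/\ve$ (note $(f/\ve)_*=f_*/\ve\ge0$), it is enough to show that, whenever $f\ge0$ on $\bcK(S_1,S_2)\times K(S_1)$, some $r$ satisfies $f+\Phi_r+\Psi_r\in\QM{S_1,S_2}$; the general statement then follows by scaling back by $\ve$. If $\bcK(S_1,S_2)=\emptyset$ then $(i)$ is vacuous and a short separate argument settles $(ii)$; so assume $\bcK(S_1,S_2)\neq\emptyset$, fix a finitely supported $(\mu_0,\uX_0)\in\bcK(S_1,S_2)\times K(S_1)$ (Proposition \ref{p:tchakaloff}), and put $C:=\sup_r\big(f(\mu_0,\uX_0)+(\Phi_r+\Psi_r)(\mu_0,\uX_0)\big)$, which is finite by Lemma \ref{l:expexp}. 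Choose $M>2C+n$, also large enough for Lemmas \ref{l:dual} and \ref{l:upper} to apply; then Lemma \ref{l:dual} together with Lemma \ref{l:upper} shows that $\sup Q_{r,M}$ increases to $f_M\ge f_*\ge0$.

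Fix $0<\eta<1$ and pick $r$ with $\sup Q_{r,M}>-\eta$, together with a feasible value $z\ge-\eta$ for \eqref{eq:Q_rm} witnessed by a multiplier $t\ge0$, so that $A:=f+t(\Phi_r+\Psi_r)-(z+tM)\in\QM{S_1,S_2}_{2r}$. Evaluating $A\ge0$ at $(\mu_0,\uX_0)$ gives $z+tM\le C(1+t)$, hence $z\le C+t(C-M)$, which together with $z>-1$ and $M>2C+n$ forces $t\le1$. Now use $\Phi_r-n\in\QM{\emptyset,\emptyset}$ (it is a sum of squares) and $\Psi_r\in\QM{\emptyset,\emptyset}$ to write
\[
f+\Phi_r+\Psi_r \;=\; \underbrace{A}_{\in\,\QM{S_1,S_2}_{2r}} \;+\; \underbrace{(1-t)(\Phi_r-n+\Psi_r)}_{\in\,\QM{S_1,S_2}} \;+\; \underbrace{\big(z+n+t(M-n)\big)}_{\ge\,0},
\]
where the scalar term is nonnegative because $z>-n$, $t\ge0$ and $M>n$; all three summands lie in $\QM{S_1,S_2}$, so $f+\Phi_r+\Psi_r\in\QM{S_1,S_2}$, as required. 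The pure equivalence $(i')\Rightarrow(ii')$ follows verbatim through the $\qm{\,\cdot\,}$-analogues of Lemmas \ref{l:dual} and \ref{l:upper}, with the generator $1\in\qm{S_1,S_2}$ playing the role of the constant $n$ of $\Phi_r$.

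The step I expect to be most delicate is this last passage, from the purely asymptotic information $\sup Q_{r,M}\to f_M\ge0$ to a finite certificate carrying the prescribed perturbation coefficient. Two features obstruct a naive argument: the cones $\QM{S_1,S_2}_{2r}$ are not closed, so one cannot simply take a limit at finite level, and the Lagrange multiplier $t$ produced by the relaxation need not equal the desired coefficient $1$. The non-closedness is neutralised already inside Lemma \ref{l:dual} (via Lemma \ref{l:closed} and the $2^r$-trick), and the multiplier is controlled by taking $M$ large relative to a single evaluation point of $\bcK(S_1,S_2)\times K(S_1)$, which pins $t$ into $[0,1]$ while the constant slack $n$ built into $\Phi_r$ (resp.\ the generator $1$ in the pure case) absorbs the leftover constant. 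The genuinely hard analytic input --- that a positive functional whose marginals obey the Carleman-type bounds $L(x_j^{2k}),L(\y(x_j^{2k})^\ell)\le\ell!(k!)^\ell M$ is dominated by a moment evaluation --- is packaged in Proposition \ref{p:det} and already consumed inside Lemma \ref{l:upper}.
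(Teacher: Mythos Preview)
Your argument for $(ii)\Rightarrow(i)$ matches the paper's. For $(i)\Rightarrow(ii)$ in the nonempty case your route is genuinely different from the paper's and correct: the paper splits into the sub-cases $f_*>0$ (where it lets $M\to\infty$ so that the multiplier $\lambda_M$ in \eqref{e:sdpsol} tends to $0$) and $f_*=0$ (where it replaces $f$ by $f+\tfrac{n\ve}{2}$ and halves $\ve$), whereas you reduce to the single target coefficient~$1$ by the scaling $f\mapsto f/\ve$, fix \emph{one} sufficiently large $M$, pin the multiplier $t$ into $[0,1)$ by evaluating at $(\mu_0,\uX_0)$, and absorb the remaining slack via $\Phi_r-n,\Psi_r\in\QM{\emptyset,\emptyset}$. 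Your identity
\[
f+\Phi_r+\Psi_r=A+(1-t)(\Phi_r-n+\Psi_r)+\big(z+n+t(M-n)\big)
\]
checks out, and the bound $t<(C+1)/(M-C)<1$ follows from $M>2C+n$ and $C\ge n\ge1$ as you claim. This is a neat trick that eliminates the paper's case distinction on $f_*$ and the passage $M\to\infty$; it is more economical than the published argument.

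The genuine gap is the empty case. The sentence ``a short separate argument settles $(ii)$'' does not suffice: when $\bcK(S_1,S_2)=\emptyset$ there is no point at which to evaluate and bound the multiplier $t$, so your scheme does not extend, and there is no cheap Nullstellensatz placing $-1$ in $\QM{S_1,S_2}$. The paper handles this case with a non-trivial auxiliary-variable reduction: adjoin $x_{n+1}$, replace $S_1$ by $x_{n+1}\cdot(\{1\}\cup S_1)$ and $S_2$ by $\y(x_{n+1})^2\cdot S_2$, so that the new constraint set is nonempty (any measure on $\R^n\times\{0\}$ with $\int x_{n+1}\,\d\mu=0$ works); then apply the nonempty case to $x_{n+1}f$ and push the result back through the homomorphism $x_{n+1}\mapsto1$, $\y(x_1^{i_1}\cdots x_{n+1}^{i_{n+1}})\mapsto\y(x_1^{i_1}\cdots x_n^{i_n})$, using Lemma~\ref{l:expexp} to control the extra perturbation terms. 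You should either reproduce this argument or supply an alternative; it is a full case (Case~(b) in the paper), not a footnote.
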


\begin{proof}
(ii)$\Rightarrow$(i) Let $\uX\in K(S_1)$ be arbitrary, and let $\nu$ be a finitely supported measure in $\bcK(S_1,S_2)$. There is $0<M<\infty$ such that
$$\Phi_r(\uX)+\Psi_r(\nu)\le M$$
for all $r\in\N$.
Then for every $\ve>0$ one has $f(\nu,\uX)\ge -\ve M$, and so $f(\nu,\uX)\ge0$.
Since $\uX$ and $\nu$ were arbitrary, and finitely supported measures in $\bcK(S_1,S_2)$ interpolate any measure in $\bcK(S_1,S_2)$ up to moments of any fixed order by Proposition \ref{p:tchakaloff}, it follows that $f(\mu,\uX)\ge0$ for all $(\mu,\uX)\in \bcK(S_1,S_2)\times K(S_1)$.

(i)$\Rightarrow$(ii) We divide the proof into two main cases (a) and (b), according to whether $\bcK(S_1,S_2)$ is empty or not. 

Case (a): assume $\bcK(S_1,S_2)\neq\emptyset$ and denote 
$f_*=\inf_{(\mu,\uX)\in\bcK(S_1,S_2)\times K(S_1)} f(\mu,\uX)$.
We further divide this case in two sub-cases.

First suppose $f_*>0$. By Proposition \ref{p:tchakaloff}, there exists $(\nu,\uX)\in \bcK(S_1,S_2)\times K(S_1)$, with a finitely supported $\nu$. Denote $M_0:=\sup_r (\Phi_r(\uX)+\Psi_r(\nu))<\infty$, and let $M > \max\{\frac{1}{f_*},M_0\}$ be arbitrary. 
By Lemmas \ref{l:dual} and \ref{l:upper}, there exists $r_M>0$ such that
$\sup Q_{r_M,M}> f_*-\frac1M$.
That is, there are $z_M\ge f_*-\frac1M$, $\lambda_M\in\R_{\ge0}$ and $q_M\in \QM{S_1,S_2}_{2r_M}$ such that
\begin{equation}\label{e:sdpsol}
f-z_M=q_M+\lambda_M\big(M-\Phi_{r_M}-\Psi_{r_M}\big).
\end{equation}
Evaluating \eqref{e:sdpsol} at $(\nu,\uX)\in \bcK(S_1,S_2)\times K(S_1)$
gives
\begin{align*}
f(\nu,\uX)-f_*+\frac1M
&\ge f(\nu,\uX)-z_M \\
&= q_M(\nu,\uX)+ \lambda_M \big(M-\Phi_{r_M}(\uX)-\Psi_{r_M}(\nu)\big) \\
&\ge\lambda_M (M-M_0),
\end{align*}
and therefore
\begin{equation}\label{e:lamest}
\lambda_M\le \frac{f(\nu,\uX)-f_*+\frac1M}{M-M_0}.
\end{equation}
The right-hand side of \eqref{e:lamest} goes to $0$ as $M\to\infty$.
By \eqref{e:sdpsol},
$$f+\lambda_M\big(\Phi_{r_M}+\Psi_{r_M}\big)
=z_M+q_M+\lambda_M M \in \QM{S_1,S_2}_{2r_M},$$
and $\lambda_M\to0$ as $M\to \infty$, so (ii) holds.

Now suppose $f_*=0$, and let $\ve>0$ be arbitrary.
By applying (i)$\Rightarrow$(ii) to $f+\frac{n\ve}{2}$ and $\frac{\ve}{2}>0$,
there exists $r\in\N$ such that
$(f+\frac{n\ve}{2})+\frac{\ve}{2} (\Phi_r+\Psi_r)\in \QM{S_1,S_2}_{2r}$.
But the latter equals $f+\ve (\Phi_r+\Psi_r) -\frac{\ve}{2} (\Phi_r-n+\Psi_r)$,
so $f+\ve (\Phi_r+\Psi_r)\in \QM{S_1,S_2}_{2r}$.

Case (b): assume $\bcK(S_1,S_2)=\emptyset$, and let $f\in\px$ and $\ve>0$ be arbitrary. 
Let $x_{n+1}$ be an auxiliary variable, and consider 
$S_1'=x_{n+1}\cdot (\{1\}\cup S_1)\subset \R[x_1,\dots,x_{n+1}]$ and $S_2'=\y_{1,0,\dots,0}^2\cdot S_2\subset \mp$.
Then $K(S_1')$ contains $\R^n\times\{0\}$, $\bcK(S_1',S_2')$ contains all $\mu\in \prob{\R^n\times\{0\}}$ such that $\int x_1\d\mu=0$ (and is thus nonempty),
and $x_{n+1}f\ge0$ on $\bcK(S_1',S_2')$.
By the case (a) of the proof above, there exists $r\in\N$ such that
\begin{equation}\label{e:addvar}
x_{n+1}f+\frac{\ve}{n+e+e^2}
\left(\Phi_r+\Psi_r+\sum_{k=0}^r\frac{x_{n+1}^{2k}}{k!}+\sum_{\substack{k,\ell\in\N,\\ k\ell\le r}} \frac{\y(x_{n+1}^{2k})^\ell}{(k!)^\ell \ell!}\right) 
\in \QM{S_1',S_2'}.
\end{equation}
Consider the homomorphism $\xi$, from moment polynomials generated by $x_1,\dots,x_{n+1}$ to moment polynomials generated by $x_1,\dots,x_n$, that is determined by
$$\xi(x_j)=x_j \text{ for }j\le n,\quad \xi(x_{n+1})=1,\quad \xi(\y_{i_1,\dots,i_{n+1}})=\y_{i_1,\dots,i_{n}}.$$
Note that $\xi$ intertwines with $\y$.
Applying $\xi$ to \eqref{e:addvar} thus gives
\[
f+\frac{\ve}{n+e+e^2}\left(\Phi_r+\Psi_r+
\sum_{k=0}^r\frac{1}{k!}+\sum_{\substack{k,\ell\in\N,\\ k\ell\le r}} \frac{1}{(k!)^\ell \ell!}
\right) 
\in \QM{S_1,S_2'}\subseteq \QM{S_1,S_2},
\]
and therefore
$f+\ve(\Phi_r+\Psi_r)\in \QM{S_1,S_2}$ because $\sum_{k=0}^r\frac{1}{k!}\le e$ and $\sum_{k\ell\le r} \frac{1}{(k!)^\ell \ell!}\le e^2$ by Lemma \ref{l:expexp}.

(i')$\Leftrightarrow$(ii') 
The proof is analogous to (i)$\Leftrightarrow$(ii), and utilizes the straightforward counterparts of Lemmas \ref{l:dual}, \ref{l:tech}, \ref{l:upper} and Proposition \ref{p:det} for $\qm{S_1,S_2}$.
\end{proof}

\begin{remark}\label{r:larger}
In Theorem \ref{t:lass}, one can replace the cone $\QM{S_1,S_2}$ with the larger cone $\QQM{S_1,S_2}$ from Section \ref{s:mompop}. While the resulting statement is slightly weaker than Theorem \ref{t:lass}, it has the advantage that it is more amenable for computations. Namely, for every $r\in\N$, the smallest $\ve_r\ge0$ such that $f+\ve_r(\Phi_r+\Psi_r)\in \QQM{S_1,S_2}_{2r}$ can be computed via SDP. Theorem \ref{t:lass} then implies that $\lim_{r\to \infty}\ve_r=0$ if $f\ge0$ on $\bcK(S_1,S_2)\times K(S_1)$.
\end{remark}

\begin{example}
The implications (ii)$\Rightarrow$(i) and (ii')$\Rightarrow$(i') of Theorem \ref{t:lass} fail in general when $S_2$ is not finite.
Let $n=1$, $f=-1$ and
$$S_2=\{\y_{2i}-(4i+1)!\colon i\in\N \}.$$
Since the $2i$\textsuperscript{th} moment of $\mu =e^{-\sqrt{|t|}}\d t$ is $(4i+1)!$, we have $\bcK(\emptyset,S_2)\neq\emptyset$ and therefore (i') and (i) are false.
Now let $\ve>0$ be arbitrary; then there exists $r\in\N$ such that
$r!\le \ve (4r+1)!$, and so
$$-1+\ve \Psi_r= \frac{\ve}{r!}\left(\y_{2r}-\frac{r!}{\ve}\right)+\ve\left(\Psi_r-\frac{1}{r!}\y_{2r}\right) \in \qm{\emptyset,S_2}.$$
Thus, (ii) and (ii') are true.
\end{example}

\begin{example}
Let $f=\y_{2,0}\y_{0,2}$. Since $f$ is a product of two elements in $\qm{\emptyset,\emptyset}$, it is nonnegative on $\prob{\R^2}$; on the other hand, $f$ does not belong to $\qm{\emptyset,\emptyset}$.
For a fixed $r\in\N$, searching for the smallest $\ve_r\ge0$ such that $f+\ve_r (1+\Psi_r)\in\qm{\emptyset,\emptyset}$ can be formulated as an SDP. For small values of $r$ one obtains $\ve_2=0.33333$, $\ve_3=0.06330$, $\ve_4=0.01416$.
\end{example}

\begin{example}
Let us return to $f=\y_{4,2}\y_{2,4}-\y_{2,2}^3$ from Example \ref{ex:h17}. 
Then $f$ is nonnegative on $\prob{\R^2}$, but
$\rho(f)<0$ for some pseudo-moment evaluation $\rho:\mp\to\R$, so $f$ does not admit a sum-of-squares certificate with denominators in the sense of Hilbert's 17th problem by Theorem \ref{t:h17pseudo}.
Nevertheless, for every $\ve>0$, Theorem \ref{t:lass} guarantees an $r\in\N$ such that $f+\ve (1+\Psi_r) \in \qm{\emptyset,\emptyset}$.
Alternatively, since $f$ is homogeneous with respect to the degree on $\mp$, its nonnegativity on $\prob{\R^2}$ is equivalent to nonnegativity on $\prob{[-1,1]^2}$. By Theorem \ref{t:arch}, $f+\ve\in\qm{\{1-x_1,1+x_1,1-x_2,1+x_2\},\emptyset}$ for every $\ve>0$.
\end{example}

\subsection{Polynomial positivity on arbitrary semialgebraic sets}

Theorem \ref{t:lass} also carries implications for classical (non-moment) polynomials (see Remark \ref{r:LN} for comparison with earlier results).

\begin{corollary}\label{c:LN}
Let $S\subseteq\px$. Then the following statements are equivalent for $f\in\px$:
\begin{enumerate}[\rm (i)]
    \item $f\ge0$ on $K(S)$;
    \item for every $\ve>0$ there exists $r\in\N$ such that $f+\ve \Phi_r\in \QM{S}$.
\end{enumerate}
\end{corollary}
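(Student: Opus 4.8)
The plan is to derive Corollary \ref{c:LN} from Theorem \ref{t:lass} by transporting a moment-polynomial certificate for $f$ down to $\px$ along a homomorphism that kills the $\Psi_r$-term. The implication (ii)$\Rightarrow$(i) is immediate: members of $\QM S$ are nonnegative on $K(S)$, so $f+\ve\Phi_r\ge0$ on $K(S)$; since $\Phi_r(\uX)=\sum_{j=1}^n\sum_{k=0}^r X_j^{2k}/k!\le\sum_{j=1}^n e^{X_j^2}$ is bounded independently of $r$, letting $\ve\to0$ gives $f(\uX)\ge0$ for all $\uX\in K(S)$.

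For (i)$\Rightarrow$(ii), I would first pass, as in case (b) of the proof of Theorem \ref{t:lass}, to a nonempty moment semialgebraic set, both so that Theorem \ref{t:lass} applies and so that a convenient base point becomes available (this also disposes of the degenerate case $K(S)=\emptyset$ at once). Introduce an auxiliary variable $x_{n+1}$ and set $S'=x_{n+1}\cdot(\{1\}\cup S)\subseteq\R[x_1,\dots,x_{n+1}]$. Then $K(S')\supseteq\R^n\times\{0\}\neq\emptyset$, and $x_{n+1}f\ge0$ on $K(S')$: if $(\uX,t)\in K(S')$ with $t>0$ then $\uX\in K(S)$, whence $f(\uX)\ge0$, while for $t=0$ the product vanishes. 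Since $x_{n+1}f$ is independent of the measure, Theorem \ref{t:lass} applied with $S_1=S'$ and $S_2=\emptyset$ provides, for each $\ve>0$, an $r\in\N$ with
\[
x_{n+1}f+\ve\bigl(\Phi_r^{(n+1)}+\Psi_r^{(n+1)}\bigr)\in\QM{S',\emptyset},
\]
where $\Phi_r^{(n+1)}$ and $\Psi_r^{(n+1)}$ are the perturbation polynomials in the variables $x_1,\dots,x_{n+1}$.

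Next, apply the $\R$-algebra homomorphism $\Xi$ from moment polynomials in $x_1,\dots,x_{n+1}$ to $\px$ determined by $\Xi(x_j)=x_j$ for $j\le n$, $\Xi(x_{n+1})=1$, and $\Xi(\y_{i_1,\dots,i_{n+1}})=0$ for $(i_1,\dots,i_{n+1})\neq0$ (so $\Xi$ restricts on the pure part to the moment functional of the Dirac measure at the origin of $\R^{n+1}$). The verifications are: $\Xi(x_{n+1}f)=f$; $\Xi(\Psi_r^{(n+1)})=0$, since $\Xi$ sends every $\y(x_j^{2k})$ with $k\ge1$ to $0$; $\Xi(\Phi_r^{(n+1)})=\Phi_r+c_r$ with $c_r:=\sum_{k=0}^r 1/k!\in[1,e]$; and, crucially, $\Xi(\QM{S',\emptyset})\subseteq\QM S$ --- indeed the generators $x_{n+1}p$ ($p\in\{1\}\cup S$) map into $\{1\}\cup S\subseteq\QM S$, each generator $\y(h^2s)$ with $s\in\{1\}\cup S'$ maps to a nonnegative real number (a perfect square when $s=1$, and $0$ when $s$ is a multiple of $x_{n+1}$, as such $s$ vanishes at the origin), and a ring homomorphism sends squares to squares, so the whole quadratic module lands in $\QM S$. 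Thus $f+\ve\Phi_r+\ve c_r\in\QM S$. Finally, absorb $c_r$: since $\Phi_r\ge n$ and $\Phi_r-n$ is a sum of squares, $\tfrac en\Phi_r-c_r=\tfrac en(\Phi_r-n)+(e-c_r)\in\QM\emptyset\subseteq\QM S$, so
\[
f+\ve\Bigl(1+\tfrac en\Bigr)\Phi_r=\bigl(f+\ve\Phi_r+\ve c_r\bigr)+\ve\Bigl(\tfrac en\Phi_r-c_r\Bigr)\in\QM S,
\]
and as $\ve$ ranges over $(0,\infty)$ so does $\ve(1+e/n)$, which gives (ii).

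The step I expect to be the main obstacle --- indeed the only genuinely delicate point --- is the choice of the homomorphism $\Xi$. The naive candidate $\y_{i_1,\dots,i_n}\mapsto x_1^{i_1}\cdots x_n^{i_n}$ does send $\QM{S,\emptyset}$ into $\QM S$, but it turns $\Psi_r$ into $\sum_{j=1}^n\sum_{k\ell\le r}x_j^{2k\ell}/((k!)^\ell\ell!)$, whose coefficients grow without bound as $r\to\infty$ (the numbers $m!\sum_{k\ell=m}(k!)^{-\ell}(\ell!)^{-1}$ are unbounded in $m$), so it cannot be absorbed into a single $\ve\Phi_r$ with $\ve\to0$. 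The remedy is to send the formal moments to \emph{constants}; choosing the Dirac measure at the origin makes $\Psi_r$ vanish altogether, and the auxiliary variable is exactly what makes this choice legitimate even when $K(S)$ is empty.
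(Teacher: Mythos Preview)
Your proof is correct. The paper's argument is more direct: it applies Theorem \ref{t:lass} to $f$ with $S_1=S$, $S_2=\emptyset$, and then pushes the resulting certificate $f+\ve(\Phi_r+\Psi_r)\in\QM{S,\emptyset}$ through the homomorphism $\zeta:\MP\to\px$ determined by $\zeta|_{\px}=\id_{\px}$ and $\zeta(\y_{i_1,\dots,i_n})=0$, so that $\Psi_r\mapsto 0$ and $\Phi_r\mapsto\Phi_r$ immediately. Your route through an auxiliary variable is more elaborate but also more careful: the paper's claim that $\zeta$ sends $\QM{S,\emptyset}$ into $\QM{S}$ fails when $0\notin K(S)$ (for instance, with $S=\{x_1-1\}$ one has $\y(x_1-1)=\y_1-1\in\QM{S,\emptyset}$, yet $\zeta(\y_1-1)=-1\notin\QM{S}$), whereas your passage to $S'=x_{n+1}\!\cdot(\{1\}\cup S)$ forces the origin into $K(S')$, so the Dirac-at-zero evaluation genuinely lands in $\QM{S}$. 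Your closing paragraph correctly diagnoses the obstruction for the ``diagonal'' map $\y_{i_1,\dots,i_n}\mapsto x_1^{i_1}\cdots x_n^{i_n}$; a shorter alternative to the auxiliary variable, available whenever $K(S)\neq\emptyset$, is to use the Dirac measure at some $\uX_0\in K(S)$: this preserves the quadratic module and sends $\Psi_r$ to a constant bounded uniformly in $r$ by Lemma \ref{l:expexp}, which can then be absorbed into $\Phi_r$ exactly as you do with $c_r$.
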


\begin{proof}
The homomorphism $\zeta:\MP\to\px$ determined by $\zeta|_{\px}=\id_{\px}$ and $\zeta(\y_{i_1,\dots,i_n})=0$ (for $i_j$ not all zero) maps $\QM{S,\emptyset}$ into $\QM{S}$. Applying $\zeta$ to the conclusions of Theorem \ref{t:arch} for $f$ and $\QM{S,\emptyset}$ gives the desired statement.
\end{proof}

\begin{remark}\label{r:LN}
The special case of Corollary \ref{c:LN} for $S=\emptyset$ is given in \cite[Theorem 4.1]{las06perturb}.
Furthermore, the conclusion of Corollary \ref{c:LN} has been established in \cite[Corollary 3.7]{LasserreNetzer} under additional assumptions, namely that $S$ is finite and has the strong moment property\footnote{
$S$ has the \emph{strong moment property} if every functional on $\px$ that is nonnegative on the preordering generated by $S$, equals integration with respect to a measure supported on $K(S)$. This is a rather restrictive property; e.g., $S=\{x_1,x_2\}$ does not have it.
}, and the interior of $K(S)$ is nonempty; furthermore, \cite[Corollary 3.7]{LasserreNetzer} requires preorderings instead of quadratic modules.
Corollary \ref{c:LN} disposes of all these assumptions.
On the other hand, \cite{las06perturb} and \cite{LasserreNetzer} provide stronger certificates in two special cases: when nonnegativity on $K(S)\cap[-1,1]^n$ is considered (and $S$ is finite with the strong moment property, and $K(S)$ has nonempty interior), a simpler perturbation can be used in place of $\Phi_r$ \cite[Corollary 3.6]{LasserreNetzer}; when $S$ consists of concave polynomials and $f$ is convex, only sums of squares and conic combinations of $S$ are needed in place of $\QM{S}$ \cite[Corollary 4.3]{las06perturb}.
\end{remark}

\begin{remark}
Corollary \ref{c:LN} characterizes polynomial positivity on arbitrary basic closed semialgebraic sets (and even more general sets, since infinitely many constraints are allowed), but differently from the renowned Krivine-Stengle Positivstellensatz \cite[Theorem 2.2.1]{marshallbook}; while the latter certificate involves preorderings and denominators, the former involves quadratic modules and coefficient perturbations.

\end{remark}

\begin{example}\label{ex:prod}
Let us record one of the simplest cases to which \cite[Corollary 3.7]{LasserreNetzer} does not apply. Clearly, $x_1x_2\ge0$ on $K(\{x_1,x_2\})$. By Corollary \ref{c:LN}, for every $\ve>0$ there exists $r\in\N$ such that $x_1x_2+\ve\Phi_r\in \QM{\{x_1,x_2\}}$, even though the set $\{x_1,x_2\}$ does not have the strong moment property.

For a fixed $r\in\N$, one can find the smallest $\ve_r\ge0$ such that $x_1x_2+\ve_r \y(\Phi_r)\in\qm{\emptyset,\emptyset}$ by solving an SDP. For $r=2,\dots,8$ the values of $\ve_r$ are
$$0.5,\, 0.012428,\, 0.002016,\, 0.000580, 0.000238,\, 0.000117,\, 
0.000065,\, 0.000032.$$
\end{example}

\begin{remark}
As is evident from the proof of Theorem \ref{t:lass}, the sequence of polynomials $\Phi_r$ can be replaced by
\begin{equation}\label{e:other}
\sum_{j=1}^n\sum_{k=0}^r \frac{x_j^{2k}}{c_k}\qquad \text{for }r\in\N,
\end{equation}
where $c_k>0$ are such that $(c_k)_k$ has super-exponential growth (to ensure point-wise convergence of \eqref{e:other}, which is used for (ii)$\Rightarrow$(i) of Theorem \ref{t:lass} and for feasibility of \eqref{eq:Q_rm} and \eqref{eq:Q*_rm}) and $(k^{-k}c_k)_k$ has at most exponential growth (which is needed for applying Proposition \ref{p:det}).
One might further contemplate whether only the constant term and the leading terms of $\Phi_r$ are essential in Corollary \ref{c:LN}; this is indeed true in certain cases \cite[Example 7.9]{KSV}. However, the following example shows this is not true in general.

Let $n=1$, $f=-2$ and $S=\{-1+\frac{x_1^{2k}}{k!-1}\colon k\ge 2\}$. Then $K(S)=\emptyset$ and $f\ge0$ on $K(S)$. 
We claim that $-2+(1+\frac{x_1^{2r}}{r!})\notin \QM{S}$ for every $r\in\N$. Indeed, suppose
\begin{equation}\label{e:bad}
-1+\frac{x_1^{2r}}{r!} = \sigma_1+\sum_{k=2}^\ell \sigma_k\cdot\left(-1+\frac{x_1^{2k}}{k!-1}\right)
\end{equation}
where $\ell\ge2$ and $\sigma_k\in\px$ are sums of squares. Note that $\ell\le r$. Let $X=\sqrt[2\ell]{\ell!-1}$. Then the right-hand side of \eqref{e:bad} is nonnegative at $X$, while the left-hand side of \eqref{e:bad} is negative at $X$, a contradiction.

On the other hand, if $\ve>0$ is arbitrary and $r\ge \frac{2}{\ve}-1$, then
$$-1+\ve\Phi_r=\left(-1+\ve\left(1+\sum_{k=2}^r \frac{k!-1}{k!}\right)\right)+\ve x_1^2+\ve\sum_{k=2}^r \frac{k!-1}{k!}\left(-1+\frac{x_1^{2k}}{k!-1}\right)\in \QM{S},$$
as anticipated by Corollary \ref{c:LN}.
\end{remark}

We conclude the section with a modified Lasserre's SDP hierarchy, applicable to arbitrary semialgebraic sets. 
Let $S\subseteq \px$, $f\in\px$ and $\ve>0$. 
Let $f_*=\inf_{\uX\in K(S)}f(\uX)$.
For $r\ge\frac{\deg f}{2}$ consider the SDP
$$f^{(\ve)}_r = \sup\left\{z \in\R\colon 
f-z+\ve\Phi_r\in \QM{S}_{2r}\right\}.$$

\begin{corollary}\label{c:numeric}
Let $S,f,\ve$ be as above. Then $(f^{(\ve)}_r)_r$ is an increasing sequence, and
\begin{equation}\label{e:poor1}
f_*\le\lim_{r\to \infty}f^{(\ve)}_r
\le\inf_{\uX\in K(S)}\left(
f(\uX)+\ve(\exp(X_1^2)+\cdots+\exp(X_n^2)\right).
\end{equation}
In particular, 
\begin{equation}\label{e:poor2}
\lim_{\ve\downarrow0}\lim_{r\to \infty}f^{(\ve)}_r = f_*.
\end{equation}
\end{corollary}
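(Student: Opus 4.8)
The plan is to derive the whole statement from Corollary~\ref{c:LN}, handling the passage between the truncated cone $\QM{S}_{2r}$ and the full quadratic module $\QM{S}$ by topping up with sum-of-squares terms of controlled degree. First, for the monotonicity of $(f^{(\ve)}_r)_r$, I would observe that $\Phi_{r+1}-\Phi_r=\sum_{j=1}^n\frac{x_j^{2(r+1)}}{(r+1)!}$ is a sum of squares of degree $2(r+1)$, hence lies in $\QM{S}_{2(r+1)}$; since also $\QM{S}_{2r}\subseteq\QM{S}_{2(r+1)}$ (a degree-$\le 2r$ representation is a degree-$\le 2(r+1)$ one), every $z$ admissible at level $r$ is admissible at level $r+1$, so the sequence is nondecreasing and $\lim_{r\to\infty}f^{(\ve)}_r$ exists in $[-\infty,+\infty]$.

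Next I would establish the two inequalities in \eqref{e:poor1}. For the right-hand one: if $z$ is admissible at level $r$ then $f-z+\ve\Phi_r\in\QM{S}_{2r}\subseteq\QM{S}$ is nonnegative on $K(S)$, so evaluating at any $\uX\in K(S)$ and using $\Phi_r(\uX)\le\sum_{j=1}^n\exp(X_j^2)$ gives $z\le f(\uX)+\ve\sum_j\exp(X_j^2)$; taking $\sup$ over admissible $z$, then $r\to\infty$, then $\inf$ over $\uX\in K(S)$ yields the bound (which is vacuous when $K(S)=\emptyset$). For the left-hand one I may assume $f_*>-\infty$; fix a real $z\le f_*$, so $f-z\ge0$ on $K(S)$ (vacuously if $K(S)=\emptyset$). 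Corollary~\ref{c:LN} applied to $f-z\in\px$, with our fixed $\ve$, produces $r_0\in\N$ together with a representation of $f-z+\ve\Phi_{r_0}$ in $\QM{S}$, say of degree at most $2D$. For any integer $r\ge\max\{r_0,D,\deg f/2\}$ we have $f-z+\ve\Phi_r=(f-z+\ve\Phi_{r_0})+\ve(\Phi_r-\Phi_{r_0})$, and $\Phi_r-\Phi_{r_0}$ is a sum of squares of degree $\le 2r$, so $f-z+\ve\Phi_r\in\QM{S}_{2r}$, i.e. $z$ is admissible at level $r$. Hence $\lim_r f^{(\ve)}_r\ge z$; letting $z\uparrow f_*$ (or, if $K(S)=\emptyset$, letting $z\to+\infty$) gives $\lim_r f^{(\ve)}_r\ge f_*$.

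Finally \eqref{e:poor2} follows by squeezing. Writing $g(\ve):=\inf_{\uX\in K(S)}\big(f(\uX)+\ve\sum_j\exp(X_j^2)\big)$, inequality \eqref{e:poor1} reads $f_*\le\lim_r f^{(\ve)}_r\le g(\ve)$. Plainly $g(\ve)\ge f_*$; conversely, for arbitrary real $c>f_*$ pick $\uX_0\in K(S)$ with $f(\uX_0)<c$ (when $K(S)=\emptyset$, both sides of \eqref{e:poor2} are $+\infty$ by the previous paragraph), so $g(\ve)\le c+\ve C$ for a constant $C=C(\uX_0)$ independent of $\ve$, whence $\limsup_{\ve\downarrow0}g(\ve)\le c$; as $c>f_*$ was arbitrary, $\lim_{\ve\downarrow0}g(\ve)=f_*$, and \eqref{e:poor2} follows. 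The corollary is thus essentially bookkeeping once Corollary~\ref{c:LN} is available; the only point needing care is the truncation step — realizing a membership in $\QM{S}$ inside $\QM{S}_{2r}$ for $r$ large by adding sum-of-squares terms of degree $\le 2r$ — together with a uniform treatment of the degenerate cases $K(S)=\emptyset$ and $f_*=\pm\infty$ in the extended reals.
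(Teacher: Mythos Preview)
Your proposal is correct and follows the same approach as the paper's proof: the left inequality in \eqref{e:poor1} comes from Corollary~\ref{c:LN}, the right inequality is a direct evaluation on $K(S)$ using $\Phi_r(\uX)\le\sum_j\exp(X_j^2)$, and \eqref{e:poor2} is the elementary fact $\lim_{\ve\downarrow0}\inf_{K}(f+\ve g)=\inf_K f$ for nonnegative $g$. Your write-up is simply more explicit than the paper's, in particular spelling out the monotonicity argument via $\Phi_{r+1}-\Phi_r\in\QM{S}_{2(r+1)}$, the truncation step (realizing a $\QM{S}$-membership inside $\QM{S}_{2r}$ for large $r$), and the degenerate cases $K(S)=\emptyset$ and $f_*=\pm\infty$.
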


\begin{proof}
The first inequality in \eqref{e:poor1} holds by Corollary \ref{c:LN}, and the second inequality in \eqref{e:poor1} is straightforward. Lastly, \eqref{e:poor2} follows from
$\lim_{\ve\downarrow 0} \inf_K(f+\ve g) = \inf_Kf$ for any nonnegative function $g$ on $\R^n$.
\end{proof}

\bibliographystyle{alpha}
\bibliography{mompop}
\end{document}